\newtheorem{thm}{Theorem}[section]
\newtheorem{lem}[thm]{Lemma}
\newtheorem{cor}[thm]{Corollary}
\newtheorem{re}[thm]{Result}
\theoremstyle{definition}
\newtheorem{defi}[thm]{Definition}
\newtheorem{remark}[thm]{Remark}
\newcommand{\Z}{\mathbb{Z}}
\newcommand{\Q}{\mathbb{Q}}
\DeclareMathOperator{\lcm}{lcm}
\DeclareMathOperator{\ord}{{\rm ord}}
\DeclareMathOperator{\Gal}{{\rm Gal}}
\author{
Ming Ming Tan\\
School of Computer Science and Engineering,\\
Nanyang Technological University, Singapore}
\begin{document}

\title{Group Invariant Weighing Matrices}

\maketitle

\begin{abstract}
We investigate the existence problem of group invariant matrices using algebraic approaches. 
We extend the usual concept of multipliers to group rings with cyclotomic integers as coefficients. 
This concept is combined with the field descent method and rational idempotents to develop new non-existence results. 
\end{abstract}

\bigskip

\noindent
{\bf 2010 Mathematics Subject Classification:  05B10, 05B20}\\[2mm]
{\bf Keywords: weighing matrices, circulant matrices, multipliers, Weil numbers, field descent}

\section{Introduction}
A \textbf{weighing matrix} $M = \boldsymbol{ W(v, n)}$ is a square matrix of order $v$ with entries $0, \pm 1$, and satisfying 
$$MM^{T} = nI,$$ for some positive integer $n$, where $I$ is the identity matrix. 
The integer $n$ is called the \textbf{weight} of the matrix.
The most important question in the study of weighing matrices is: 
For what values of $v$ and $n$, does there exist a $W(v,n)$? 
For the special case where $v = n$, the weighing matrix is an Hadamard matrix. 
Hadamard matrices have been studied intensively, see \cite{SeberryYamada} for an overview. 
For a survey of weighing matrices, see \cite{GeramitaSeberry}. 
See \cite{Ohmori,KoukouvinosSeberry,GysinSeberry,CraigenKharaghani,Craigen,StantonMullin,
	EadesHain, Mullin, SeberryWhiteman, Launey1984} for further references. 

\medskip

For our studies, we only focus on weighing matrices which are invariant under a group operation. 
Let $G$ be a group of order $v$. 
A matrix $M=(m_{f,h})_{f,h \in G}$ indexed with the elements of $G$ is said to be $\boldsymbol{G}$\textbf{-invariant} if 
$m_{fk, hk} = m_{f,h}$ for all $f, h, k \in G$. 
Notice that a group invariant matrix can be identified solely by its first row.

\medskip

If $G$ is a cyclic group, then a $G$-invariant weighing matrix is circulant.
Circulant weighing matrices have been studied much more intensively compared to other group invariant weighing matrices. 
We denote a \textbf{circulant weighing matrix} by $CW(v,n)$.

\medskip

In the present paper, we prove the non-existence of infinite families of group invariant weighing matrices. 
Our algebraic approaches mainly consist of the ``F-bound'', generalized multiplier theorems, analysis of cyclotomic integers of prescribed absolute value and rational idempotents. 
In particular, we solve $19$ open cases of circulant weighing matrices of order less than $200$. 
We append the most updated version of Strassler's table concerning the existence status of
circulant weighing matrices of order at most $200$ and weight at most $100$; see \Cref{table}. Detailed references for each case are provided as well. 
In addition, we solve some open cases in the table of group invariant weighing matrices of \cite{arasu2013group}, where the group is abelian but non-cyclic.
Summary of these results are shown in \Cref{table4}.
In this paper, all previously known results are labeled as ``Result xyz''.

\medskip

This paper generalizes some of the results of the author's thesis \cite{MingMingTanThesis} to abelian groups. Previously, these results were shown only for cyclic groups. 

\section{Preliminaries} 
Let $G$ be a finite abelian group and $\zeta_u$ be a primitive $u$-th root of unity.
The cyclotomic group ring $\Z[\zeta_u][G]$, is a ring that consists of elements of the form
$$X = \sum_{g \in G} X_g g$$ with $X_g \in \Z[\zeta_u]$. 
We call $X_g$ the cyclotomic \textbf{coefficient} of $g$ in $X$. 

\medskip

A group homomorphism $G \to H$ is always assumed to be extended to a group ring homomorphism $\Z[\zeta_u][G] \to \Z[\zeta_u][H]$ by linearity. 
Similarly, given a character $\chi$ of $G$, we define 
$\chi(X)=\sum_{g \in G} X_g \chi(g).$ 

\medskip

Let $t$ be an integer coprime to $u|G|$ and $\sigma_t \in {\rm Gal}(\Q(\zeta_u)/\Q)$ be the automorphism defined by $\zeta_u^{\sigma_t} = \zeta_u^t.$
We define 
$$X^{(t)} = \sum_{g \in G} X_g ^{\sigma_t} g^t.$$
Note that $X^{(-1)} = \sum_{g\in G} \overline{X}_gg^{-1}$, where the bar denotes complex conjugation.
An element $\pm\zeta_u^e g X$ for some $g \in G$ and $e \in \Z$ is called a \textbf{translate} of $X$.
We say that $X$, $Y \in \Z[\zeta_u]$ are \textbf{equivalent} if there are $\sigma\in \Gal(\Q(\zeta_u)/\Q)$ and a root of unity $\eta$ with $Y=\eta X^{\sigma}$. 

\medskip

Note that when $u=1$, the coefficient ring is the ring of integer. 
Given $D = \sum_{g \in G} a_g g \in \Z[G]$, we call the set of group elements with non-zero coefficients in $D$ the \textbf{support} of $D$, written as
$$\mbox{supp($D$)}= \left \{ g \in G: a_g \neq 0 \right \}.$$
For a subset $S$ of $G$, we will also use the same symbol $S$ to denote the corresponding group ring element $\sum_{g \in S} g$.

\medskip

We can identify a $G$-invariant matrix $M = (m_{f,h})$ with the group ring element $D = \sum_{g \in G} m_{1, g} g$ of $\Z[G]$. 
Note that $M^T$ is identified with $D^{(-1)}$ in $\Z[G]$ under this correspondence. 
Hence, a $G$-invariant weighing matrix of order $v$ and weight $n$ is equivalent to an element $D$ of $\Z[G]$ with coefficients $0, \pm 1$ only and satisfies
$$DD^{(-1)} = n.$$
This is the formulation we will use in the rest of our paper. 
Note that a weighing matrix $D = \sum_{g \in G} a_g g$ has weight $n =\sum_{g \in G}a_g^2$. The weight must be a square, as $n$ is also equal to $|\sum_{g \in G} a_g|^2$. 

\medskip

A weighing matrix $D \in \Z[G]$  is said to be \textbf{proper}
if $Dg \notin \Z[H]$ for all $g \in G$ and all proper subgroup $H$ of $G$.
In this paper, we consider only proper group invariant weighing matrices.
We call two weighing matrices $D, E \in \Z[G]$ \textbf{equivalent} if there are $g \in G$ and $t \in \Z$ with $\gcd(t,|G|)=1$ such that $E = \pm D^{(t)}g$.

\medskip

Consider the natural projection from $G$ to $H$. 
The image of a weighing matrix under this projection is called \textbf{integer weighing matrix}. 
An integer weighing matrix is denoted by $IW_a(v,n)$, where $a$ is a positive integer. 
It is defined the same way as weighing matrix $W(v,n)$, but with the entries 
not confined to $\{0, \pm 1\}$ but to $\{0, \pm 1, \pm 2, \ldots, \pm a\}$ instead. 
We denote an \textbf{integer circulant weighing matrix} by $ICW_a(v,n)$.
Note that we remove the subscript $a$ when $a = 1$. 

\medskip

In this paper, we let $C_v$ denote the cyclic group of order $v$. 
For a divisor $u$ of $v$, we always view $C_u$ as a subgroup of $C_v$. 
The identity element of a group is denoted by $1$. 
For a prime $p$ and integer $x$, let $x(p)$ be the $p$-free part of $x$, i.e., $x = x(p)p^a$ where $x(p)$ is coprime to $p$.
For relatively prime integers $t$ and $s$, we denote the multiplicative order of $t$ modulo $s$ by ${\rm ord}_s(t)$.
In addition, we denote the number of distinct prime divisors of an integer $u$ by $\delta(u)$ and the Euler totient function by $\varphi$.

\medskip

The following is a standard result, see \cite[VI, Lem. 3.5]{DesignTheory}. 

\begin{re}[Inversion Formula]\label{inversion}
	Let $G$ be an abelian group and $\hat{G}$ be the group of complex characters of $G$.
	Let $D =\sum_{g \in G} a_gg \in \Z[G]$. Then 
	$$a_g = \frac{1}{|G|}\sum_{\chi \in \hat{G}} \chi(Dg^{-1})$$ for all $g \in G$. 
\end{re}

The following result is a consequence of \Cref{inversion}. 

\begin{re}\label{character_GW}
	Let $G$ be an abelian group of order $v$. 
	Suppose $D \in \Z[G]$ has coefficients in $\{0, \pm 1, \ldots, \pm a \}$ for some integer $a$. 
	Then $D$ is a $G$-invariant $IW_a(v,n)$ if and only if $\chi(D) \overline{\chi(D)}=n$ for all $\chi \in \hat{G}$. 
\end{re}

The following is a result by Kronecker \cite[Section 2.3, Thm. 2]{BorevichShafarevich}.
\begin{re}[Kronecker]\label{kronecker}
	An algebraic integer all of whose conjugates have absolute values equal to $1$ is a root of unity.
\end{re}

\begin{defi}\label{defi:selfconjugate}
	Let $p$ be a prime and $v$ be a positive integer. 
	If there is an integer $j$ with $p^j \equiv -1 \bmod{v(p)}$, then $p$ is called 
	\textbf{self-conjugate} modulo $v$. A composite integer $n$ is called self-conjugate modulo $v$ if 
	every prime divisor of $n$ has this property.
\end{defi}

The reader may refer to \cite[Re. 1.3.7]{MingMingTanThesis} for the proof of the following result. 

\begin{re} \label{selfconjugate_solution2}
	Let $q$ and $p$ be distinct primes where $q$ is odd and ${\rm ord}_q(p)$ is even. 
	If $|X|^2=p^f$ for $X\in \Z[\zeta_{q^c}]$ and some positive integers $c$, $f$, 
	then $f$ is even. 
\end{re}

\begin{re}[Turyn \cite{Turyn}] \label{turyn} 
	Assume that $X \in \Z[\zeta_v]$ satisfies 
	$$X\overline{X} \equiv 0 \bmod{t^{2b}}$$
	where $b, t$ are positive integers, and $t$ is self-conjugate modulo $v$. Then 
	$$X \equiv 0 \bmod{t^b}.$$
\end{re}

\begin{re}[Ma \cite{Ma}]\label{Ma}
	Let $p$ be a prime and let $G$ be a finite abelian group with a cyclic Sylow $p$-subgroup. If $Y\in \Z[G]$ satisfies
	$\chi(Y)\equiv 0\ mod\ p^a$
	for all non-trivial characters $\chi$ of $G$,
	then there exist $X_1,X_2\in {\Z}[G]$ such that $$Y=p^aX_1+PX_2,$$
	where $P$ is the unique subgroup of order $p$ of $G$.
\end{re}

An element $X$ of $\Z[\zeta_v]$ can be expressed uniquely in terms of an integral basis of $\Q(\zeta_v)$ over $\Q$.
Note that throughout this paper, we use the integral basis $B$ as defined in the following result.
See \cite[Lem. 2.4]{Schmidt} for a proof.

\begin{re} \label{integralbasis}
	Let $u=\prod_{i=1}^{\delta(u)} q_i^{a_i}$ be the prime power decomposition of $u$. Then 
	$$B=\left\{ \prod_{i=1}^{\delta(u)} \zeta_{q_i}^{k_i}\zeta_{q_i^{a_i}}^{l_i}: 0 \leq k_i \leq q_i-2, 0 \leq l_i \leq q_i^{a_i-1}-1 \right\}$$
	is an integral basis of $\Q(\zeta_u)$ over $\Q$. 
	Suppose $X \in \Z[\zeta_u]$ has the form:
	$$X = \sum_{j=0}^{u-1} b_j \zeta_u^j,$$
	where $b_0, \ldots, b_{u-1}$ are integers with $|b_j| \leq C$ for some constant $C$. 
	Then, if we express $X$ in terms of the integral basis $B$, we will have 
	$$X = \sum_{x \in B} c_x x$$
	where the integers $c_x$ satisfy $|c_x| \leq 2^{\delta(u)}C$ for all $x \in B$.
\end{re}

The reader may refer to the proof of the following corollary in \cite[Cor. 1.3.14]{MingMingTanThesis}. 
\begin{cor}\label{cassels2}
	Let $X \in \Z[\zeta_{2^c}]$ such that $|X|^2=n$. 
	Let $B=\left\{1, \zeta_{2^c}, \ldots, \zeta_{2^c}^{2^{c-1}-1}\right\}$. 
	Write $X = \sum_{x \in B} c_x x$, where $c_x$ is an integer for each $x \in B$.
	Then, $|c_x| \leq \sqrt{n}$ for each $x \in B$. 
\end{cor}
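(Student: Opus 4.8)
The plan is to deduce this corollary from \Cref{integralbasis} together with the elementary observation that the coefficients of $X$ with respect to the \emph{power} basis $\{1,\zeta_{2^c},\dots,\zeta_{2^c}^{2^c-1}\}$ can be taken to be bounded by $\sqrt n$. Concretely, I would first write $X=\sum_{j=0}^{2^c-1}b_j\zeta_{2^c}^j$ with integer $b_j$. The key point is that, because $|X|^2=n$, all Galois conjugates of $X$ have absolute value $\sqrt n$; averaging $|X^{\sigma}|^2$ over $\sigma\in\Gal(\Q(\zeta_{2^c})/\Q)$, or equivalently using the inner-product/trace pairing on $\Z[\zeta_{2^c}]$, shows $\sum_j b_j^2\le n$ after one normalizes the representative of $X$ (replacing $X$ by $\zeta_{2^c}^{-k}X$ if necessary so that the representation is ``balanced''). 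In particular each $|b_j|\le\sqrt n$. This is the place where a little care is needed: a priori $b_j$ is only defined modulo the relation $\zeta_{2^c}^{2^{c-1}}=-1$, so I must pick the representative for which the bound holds — and indeed the standard argument (see Cassels, or the thesis reference) is that one may always choose $b_j\in\{0,\dots\}$ so that $\sum b_j^2=\operatorname{Tr}_{\Q(\zeta_{2^c})/\Q}(X\overline X)/\varphi(2^c)\cdot(\text{something})$, giving $\sum_j b_j^2\le n$.

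Next I would invoke \Cref{integralbasis} with $u=2^c$, so $\delta(u)=1$ and the integral basis there is $B'=\{\zeta_2^{k}\zeta_{2^c}^{l}:0\le k\le 0,\ 0\le l\le 2^{c-1}-1\}=\{1,\zeta_{2^c},\dots,\zeta_{2^c}^{2^{c-1}-1}\}$, which is exactly the set $B$ in the statement. Applying the last assertion of \Cref{integralbasis} with $C=\sqrt n$ (valid by the previous paragraph) yields that, writing $X=\sum_{x\in B}c_x x$, one has $|c_x|\le 2^{\delta(u)}C=2\sqrt n$. That already gives a bound of the right shape but with an extra factor of $2$, so a direct application of \Cref{integralbasis} is not quite sharp enough; the content of this corollary is precisely that in the $2$-power case the factor $2^{\delta(u)}=2$ can be removed.

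To remove the factor $2$, I would look more closely at how the power-basis representation $\sum_{j=0}^{2^c-1}b_j\zeta_{2^c}^j$ collapses onto $B$: only the top half of the exponents, $j=2^{c-1},\dots,2^c-1$, need to be reduced, and they reduce via $\zeta_{2^c}^{j}=-\zeta_{2^c}^{j-2^{c-1}}$, so $c_l=b_l-b_{l+2^{c-1}}$ for $0\le l\le 2^{c-1}-1$. Naively $|c_l|\le|b_l|+|b_{l+2^{c-1}}|\le 2\sqrt n$, which is the weak bound. The improvement comes from choosing the \emph{balanced} representative of $X$: one can always normalize so that for each $l$ at most one of $b_l,b_{l+2^{c-1}}$ is nonzero (subtract $\zeta_{2^c}^0\cdot m$ type corrections — more precisely, replacing $b_l\mapsto b_l-b_{l+2^{c-1}}$, $b_{l+2^{c-1}}\mapsto 0$ is exactly passing to $B$, and the resulting $c_l$ are themselves the coefficients of a bona fide element with $|X|^2=n$). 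Then $|c_l|^2\le\sum_{l}c_l^2 = |X|^2 = n$ by the trace computation applied directly in the basis $B$ (which is orthogonal for the trace form on $\Q(\zeta_{2^c})$ up to the scalar $\varphi(2^c)$ on this particular basis), giving $|c_x|\le\sqrt n$ as claimed.

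The main obstacle is the normalization/orthogonality bookkeeping in the last step: one has to be careful that the basis $B=\{1,\zeta_{2^c},\dots,\zeta_{2^c}^{2^{c-1}-1}\}$ behaves well under the trace pairing $\langle a,b\rangle=\operatorname{Tr}_{\Q(\zeta_{2^c})/\Q}(a\overline b)$ — for $2$-power cyclotomic fields this basis is in fact orthogonal with $\langle x,x\rangle=\varphi(2^c)=2^{c-1}$ for each $x\in B$, so $\sum_{x\in B}c_x^2 = \langle X,X\rangle/2^{c-1}=\operatorname{Tr}(X\overline X)/2^{c-1}$. Since every conjugate of $X$ has squared absolute value $n$ and there are $2^{c-1}$ conjugates, $\operatorname{Tr}(X\overline X)=2^{c-1}n$, whence $\sum_{x\in B}c_x^2=n$ and in particular $c_x^2\le n$ for every $x$. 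This orthogonality is special to the $p=2$ case and is exactly why the $2^{\delta(u)}$ loss of \Cref{integralbasis} disappears here; the rest is routine.
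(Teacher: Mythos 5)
Your final paragraph is a complete and correct proof, and it is essentially the intended (Cassels-type) argument: the paper itself gives no proof of \Cref{cassels2} but defers to the thesis, and the standard argument there is exactly the trace computation you end with. Indeed, for $0\le i,j\le 2^{c-1}-1$ with $i\neq j$ the element $\zeta_{2^c}^{i-j}$ is a root of unity of $2$-power order at least $4$, so $\operatorname{Tr}_{\Q(\zeta_{2^c})/\Q}(\zeta_{2^c}^{i}\overline{\zeta_{2^c}^{j}})=0$, while $\operatorname{Tr}(1)=2^{c-1}$; and since $X\overline X=n$ is rational, every conjugate of $X\overline X$ equals $n$ (complex conjugation commutes with the Galois action as the extension is abelian), so $\operatorname{Tr}(X\overline X)=2^{c-1}n$ and hence $\sum_{x\in B}c_x^2=n$, giving $|c_x|\le\sqrt n$. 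The earlier parts of your write-up are unnecessary and somewhat misleading: there is no normalization or ``balanced representative'' to choose, because $B$ is a $\Q$-basis and the $c_x$ are uniquely determined by $X$ (your first paragraph's claim that one can bound the non-unique coefficients $b_j$ over $\{1,\dots,\zeta_{2^c}^{2^c-1}\}$ by $\sqrt n$ is really a consequence of the final computation, not an independent step), and the detour through \Cref{integralbasis} only yields the weaker bound $2\sqrt n$, as you note. So the proof should be read as consisting of the last paragraph alone; with that pruning it stands on its own and matches the cited source's approach.
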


The following notations and definitions are required in order to use the field descent method \cite{Schmidt}.
For a prime $p$ and a positive integer $t$, let $\nu_p(t)$ be defined by $p^{\nu_p(t)} || t,$ i.e., 
$p^{\nu_p(t)}$ is the highest power of $p$ dividing $t$. 
By $\mathcal{D}(t)$ we denote the set of prime divisors of $t$.

\begin{defi}\label{defi:F-value}
	Let $v, n$ be integers greater than $1$. For $q \in \mathcal{D}(n)$, let 
	\begin{equation*}
		\mu_q=\left\{
		\begin{array}{cc}
			\prod_{p \in \mathcal{D}(v) \setminus \{q\}} p & \mbox{ if $v$ is odd or $q = 2,$}\\
			4\prod_{p \in \mathcal{D}(v) \setminus \{2,q\}}p & \mbox{ otherwise. }\\
		\end{array}\right.
	\end{equation*}
	Set 
	\begin{eqnarray*}
		b(2,v,n) &=& \max_{q \in \mathcal{D}(n)\setminus\{2\}} \left\{ \nu_2(q^2-1) + \nu_2({\rm ord}_{\mu_q}(q))-1\right\} \mbox{ and}\\
		b(r,v,n) &=& \max_{q \in \mathcal{D}(n)\setminus\{r\}} \left\{ \nu_r(q^{r-1}-1) + \nu_r({\rm ord}_{\mu_q}(q))\right\}
	\end{eqnarray*}
	for primes $r > 2$ with the convention that $b(2, v, n) = 2$ if $\mathcal{D}(n)=\{2\}$ and 
	$b(r, v, n)=1$ if $\mathcal{D}(n)=\{r\}.$ 
	We define
	$$F(v,n)=\gcd \left(v, \prod_{p\in \mathcal{D}(v)} p^{b(p,v,n)}\right).$$
	Note that we define an empty product as $1$.
\end{defi}

The following result was proved in \cite[Thm. 2.2.8]{SchmidtBook}.
\begin{re}\label{Fdescent}
	Assume $X\overline{X} = n$ for $X \in \Z[\zeta_v]$ where $n$ and $v$ are positive integers. Then 
	$$X\zeta_v^j \in \Z[\zeta_{F(v,n)}]$$ for some integer $j$.
\end{re}

We use the following important consequence of \Cref{Fdescent}. 

\begin{re}[F-bound] \label{Fbound}
	Let $X \in \Z[\zeta_v]$ be of the form 
	$$X = \sum_{i=0}^{v-1}a_i\zeta_v^i$$ 
	with $|a_i| \leq C$ for some constant $C$ and assume that $n= X\overline{X}$ is an integer. 
	Then 
	$$n \leq \frac{C^2F(v,n)^2}{\varphi(F(v,n))}.$$
\end{re}

\section{Multiplier Theorems} \label{sec:Multiplier}
Multipliers were first introduced for the studies of difference sets. 
Arasu and Seberry \cite{ArasuSeberry} were the first to notice its applications to the studies of circulant weighing matrices. 
Arasu and Ma \cite{ArasuMa} extended the concept of multipliers from integer group rings to cyclotomic group rings, which helps to tackle cases where $\gcd(v,n) > 1$. However, their result only covers cyclic groups. 
We will generalize their result to abelian groups and give a simpler proof. 

\begin{defi} \label{defi:cyclotomic_multiplier}
	Let $X \in \Z[\zeta_u][G]$ where $G$ is an abelian group and 
	$u$ is a positive integer. 
	An integer $t$ with $\gcd(t, u|G|)=1$ is called a \textbf{multiplier} of 
	$X$ if $$X^{(t)}= \pm \zeta_u^e g X$$ for some $g \in G$ and $e \in \Z$.
\end{defi}

The following classical multiplier theorem is due to McFarland, see \cite[Thm. 9.1]{McFarland}.

\begin{re}[McFarland] \label{McFarland}
	Let $G$ be a finite abelian group of order $v$ and exponent $e$. 
	Let $D \in \Z[G]$ such that 
	$$DD^{(-1)} \equiv n \bmod{G}$$ where $\gcd(v,n)=1$. 
	Let $$n = p_1^{e_1} \cdots p_s^{e_s}$$
	where the $p_i$'s are distinct primes. 
	Let $t$ be an integer with $\gcd(v,t)=1$ and suppose there are integers $f_1, \cdots f_s$ such that 
	$$t \equiv p_1^{f_1} \equiv \ldots \equiv p_s^{f_s} \bmod{e}.$$
	Then $t$ is a multiplier of $D$.
	Furthermore, $D$ has a translate $D'=Dg$, $g \in G$, such that 
	$$D'^{(t)} = D'.$$
\end{re}

In the following, we present a partial generalization of Theorem 3.3 in \cite{ArasuMa}.
Note that it contains McFarland's result as a special case. 
We need the following lemma. 

\begin{lem} \label{lem: fixes1}
	Let $X \in \Z[\zeta_u][G]$ where $G$ is an abelian group and 
	$u$ is a positive integer.
	If $XX^{(-1)}=1$, then $X=\pm \zeta_u^eg$
	for some $g \in G$ and $e \in \Z$.
\end{lem}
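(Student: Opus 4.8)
The plan is to apply a character-theoretic argument combined with Kronecker's theorem (Result~\ref{kronecker}). First I would take any complex character $\chi$ of $G$ and extend it to a ring homomorphism $\Z[\zeta_u][G] \to \C$ in the natural way, so that $\chi(\zeta_u) = \zeta_u$ (viewed inside $\C$). Applying $\chi$ to the equation $XX^{(-1)} = 1$, and noting that $\chi(X^{(-1)}) = \overline{\chi(X)}$ because $X^{(-1)} = \sum_g \overline{X}_g g^{-1}$ and $\chi(g^{-1}) = \overline{\chi(g)}$, I get $\chi(X)\overline{\chi(X)} = 1$, i.e.\ $|\chi(X)| = 1$ for every character $\chi$. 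More generally, for any Galois automorphism $\sigma$ of a suitable cyclotomic field containing the values of $\chi$ and $\zeta_u$, the element $\chi(X)^{\sigma}$ is again of the form $\chi'(X^{(t)})$ for an appropriate character $\chi'$ and exponent $t$, hence also has absolute value $1$; therefore every conjugate of the algebraic integer $\chi(X)$ has absolute value $1$, so by Kronecker's theorem $\chi(X)$ is a root of unity, for every $\chi \in \hat{G}$.

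Next I would recover the shape of $X$ from this information. Write $X = \sum_{g \in G} X_g g$ with $X_g \in \Z[\zeta_u]$. The inversion-type formula (an analogue of Result~\ref{inversion} with cyclotomic coefficients) gives $|G| \cdot X_g = \sum_{\chi \in \hat{G}} \chi(X) \overline{\chi(g)}$, so each $|G| X_g$ is a sum of $|G|$ roots of unity, whence $|X_g| \le 1$ after dividing — more precisely, $X_g$ has absolute value at most $1$ under every embedding, so each nonzero $X_g$ is itself a root of unity times a unit, but to nail it down I would instead compute $\sum_{g} X_g \overline{X_g}$ (the coefficient of the identity in $XX^{(-1)}$), which equals $1$. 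Since the coefficient of $1$ in $XX^{(-1)}$ is $\sum_{g\in G} X_g \overline{X_g}$ and this must equal $1$, and each $X_g\overline{X_g}$ is a totally nonnegative algebraic integer, taking the trace down to $\Q$ forces exactly one $g$ to have $X_g \neq 0$ with $X_g \overline{X_g}$ a totally positive algebraic integer of trace-sum constraints forcing $X_g \overline{X_g} = 1$; hence $X_g$ is a unit in $\Z[\zeta_u]$ all of whose conjugates have absolute value $1$, so by Kronecker again $X_g$ is a root of unity in $\Z[\zeta_u]$, i.e.\ $X_g = \pm \zeta_u^e$ for some $e$. Thus $X = \pm \zeta_u^e g$, as claimed.

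The step I expect to be the main obstacle is the passage from ``$\chi(X)$ is a root of unity for all $\chi$'' (an easy consequence of Kronecker) to pinning down that $X$ has a \emph{single} nonzero coefficient which is moreover a root of unity. The cleanest route is the trace computation: the identity coefficient of $XX^{(-1)} = 1$ reads $\sum_{g \in G} X_g \overline{X_g} = 1$ in $\Z[\zeta_u]$; applying the (nonnegative) archimedean embeddings and then summing, each summand $X_g\overline{X_g}$ is a sum of squares of absolute values hence $\ge 0$ real, and since their total is $1$ one needs the arithmetic fact that a totally nonnegative algebraic integer with small trace is forced to be $0$ or $1$ — here one uses that $\sum_g X_g\overline{X_g}=1$ literally as cyclotomic integers, not just numerically, so all but one term vanishes and the surviving term equals $1$; then Kronecker (Result~\ref{kronecker}) finishes it since a unit of absolute value $1$ in all conjugates is a root of unity, and the only roots of unity in $\Z[\zeta_u]$ are $\pm \zeta_u^e$. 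I would carry out the character application first, then Kronecker, then the trace/identity-coefficient bookkeeping, and finally the classification of cyclotomic units of absolute value one.
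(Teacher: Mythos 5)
Your proposal is correct and takes essentially the same route as the paper: both arguments extract the identity coefficient of $XX^{(-1)}=1$, namely $\sum_{g\in G}X_g\overline{X_g}=1$, use total nonnegativity of each term to force a single nonzero coefficient equal to $1$ in absolute value under every embedding, and finish with Kronecker's theorem (Result \ref{kronecker}) plus the fact that the roots of unity in $\Q(\zeta_u)$ are $\pm\zeta_u^e$; your trace-to-$\Q$ step is just the paper's device of applying every $\sigma\in\Gal(\Q(\zeta_u)/\Q)$ to this equation, summed over the Galois group. The opening character-theoretic paragraph is a harmless detour that the final argument never actually uses.
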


\begin{proof}
	Write $X=\sum_{g \in G} X_gg$ with $X_g \in \Z[\zeta_u]$. 
	From $XX^{(-1)}=1$ we get $$\sum_{g \in G} |X_g|^2 =1.$$ 
	This implies
	$$\sum_{g \in G} |X_g^{\sigma}|^2 = \left(\sum_{g \in G} |X_g|^2\right)^{\sigma}=1^{\sigma}=1$$
	for all $\sigma \in {\rm Gal}(\Q(\zeta_u)/\Q)$.
	Hence $|X_g^{\sigma}|\le 1$ for all $g \in G$ and $\sigma \in {\rm Gal}(\Q(\zeta_u)/\Q)$.
	Now Result \ref{kronecker} implies that there is $g \in G$ such that $X_g=\pm \zeta_u^e$, for some integer $e$ 
	and $X_k=0$ for all $k \neq g$. 
\end{proof}

\begin{thm} [Generalized Multiplier Theorem]\label{thm:fixesn}
Let $X \in \Z[\zeta_u][G]$ where $G$ is an abelian group and 
$u$ is a positive integer.
Let $v=\lcm(u,\exp(G))$. 
Suppose that 
\begin{equation} \label{eq: fixesn} 
X X^{(-1)}=n,
\end{equation}
where $n$ is a positive integer with $\gcd(n,|G|)=1$. 
Suppose $t$ is an integer that fixes all prime ideals of $n$ in $\Z(\zeta_v)$. 
Then $t$ is a multiplier of $X$.
\end{thm}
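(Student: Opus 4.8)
The plan is to deduce the statement from \Cref{lem: fixes1}, applied to a suitable renormalization of $W:=X^{(t)}X^{(-1)}$. First I would note that $W\in\Z[\zeta_u][G]$ (since $\sigma_t$ preserves $\Z[\zeta_u]$ and complex conjugation does too) and record the elementary identities for the operation $Y\mapsto Y^{(s)}$: $(Y^{(-1)})^{(-1)}=Y$, $(Y^{(t)})^{(-1)}=Y^{(-t)}$, and $(XX^{(-1)})^{(t)}=n^{(t)}=n$ because $n$ is a rational integer. Using these together with \eqref{eq: fixesn} gives, in the commutative ring $\Z[\zeta_u][G]$,
$$W W^{(-1)}=X^{(t)}X^{(-1)}X^{(-t)}X=\bigl(X^{(t)}X^{(-t)}\bigr)\bigl(X^{(-1)}X\bigr)=(XX^{(-1)})^{(t)}\cdot n=n^{2}.$$
Hence, once I show that every cyclotomic coefficient of $W$ is divisible by $n$ in $\Z[\zeta_u]$, the element $W':=W/n$ lies in $\Z[\zeta_u][G]$ and satisfies $W'W'^{(-1)}=1$, so \Cref{lem: fixes1} yields $W'=\pm\zeta_u^{e}g$ for some $g\in G$ and $e\in\Z$. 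Then $X^{(t)}X^{(-1)}=\pm n\zeta_u^{e}g$; multiplying on the right by $X$ and using $X^{(-1)}X=n$ gives $nX^{(t)}=\pm n\zeta_u^{e}gX$, and cancelling $n$ gives $X^{(t)}=\pm\zeta_u^{e}gX$, i.e.\ $t$ is a multiplier of $X$.

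So the whole proof reduces to the divisibility $W\equiv 0\bmod n$, and this is exactly where the hypothesis on $t$ enters. I would compute the characters of $W$. Since $\chi(g)$ is a root of unity whose order divides $v=\lcm(u,\exp(G))$, we have $\chi(g^{t})=\chi(g)^{t}=\sigma_t(\chi(g))$, so $\chi(X^{(t)})=\sigma_t(\chi(X))$; combined with $\chi(X^{(-1)})=\overline{\chi(X)}$ this gives $\chi(W)=\chi(X)^{\sigma_t}\,\overline{\chi(X)}$ for all $\chi\in\hat G$, while $\chi(X)\overline{\chi(X)}=\chi(XX^{(-1)})=n$. Now $\chi(X)\in\Z[\zeta_v]$ is nonzero and $(\chi(X))(\overline{\chi(X)})=(n)$, so the ideal $(\chi(X))$ is a product of prime ideals of $\Z[\zeta_v]$ lying over the rational primes dividing $n$. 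By hypothesis $\sigma_t$ fixes each such prime ideal, hence fixes $(\chi(X))$; therefore $\chi(X)^{\sigma_t}=\varepsilon_\chi\,\chi(X)$ for some unit $\varepsilon_\chi\in\Z[\zeta_v]$, and so $\chi(W)=\varepsilon_\chi n$. Feeding this into the Inversion Formula (\Cref{inversion}), the coefficient $w_g$ of $g$ in $W$ equals $\tfrac{n}{|G|}\sum_{\chi\in\hat G}\varepsilon_\chi\overline{\chi(g)}=\tfrac{n}{|G|}\beta_g$ with $\beta_g\in\Z[\zeta_v]$. Since also $\beta_g=|G|w_g/n\in\Q(\zeta_u)$, we get $\beta_g\in\Z[\zeta_v]\cap\Q(\zeta_u)=\Z[\zeta_u]$; and as $\gcd(n,|G|)=1$, writing $1=an+b|G|$ gives $w_g=n(aw_g+b\beta_g)\in n\,\Z[\zeta_u]$, which is the required divisibility.

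I expect the middle paragraph to be the main obstacle, specifically the two number-theoretic passages: from ``$t$ fixes the prime ideals of $n$'' to ``$\sigma_t$ fixes the principal ideal $(\chi(X))$'' (which uses the factorization of $(n)$ in $\Z[\zeta_v]$ together with $\chi(X)\overline{\chi(X)}=n$), and the descent of the integrality of $w_g/n$ from $\Z[\zeta_v]$ down to $\Z[\zeta_u]$ via $\gcd(n,|G|)=1$; everything else is bookkeeping with $(\cdot)^{(t)}$ and an application of \Cref{lem: fixes1}. Finally I would remark that \Cref{McFarland} is the special case $u=1$: the congruences $t\equiv p_i^{f_i}\bmod{\exp(G)}$ force $\sigma_t$ to act on $\Z[\zeta_v]$ as a power of the Frobenius at each prime above $p_i$, hence to fix every prime ideal of $n$, so the multiplier conclusion is recovered.
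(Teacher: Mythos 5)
Your proposal is correct and follows essentially the same route as the paper: both set $F=X^{(t)}X^{(-1)}$, use the hypothesis that $\sigma_t$ fixes the prime ideals above $n$ to get $\chi(F)\equiv 0 \pmod n$ for every character, invoke the inversion formula with $\gcd(n,|G|)=1$ to write $F=nE$, apply \Cref{lem: fixes1} to $E$, and cancel $n$ after multiplying back by $X$. Your extra details (the unit $\varepsilon_\chi$, and the descent of integrality from $\Z[\zeta_v]$ to $\Z[\zeta_u]$) simply spell out steps the paper leaves implicit.
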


\begin{proof}
	Write $F=X^{(t)}X^{(-1)}$. Let $\chi$ be any character
	of $G$. Then $|\chi(X)|^2 =n$ by (\ref{eq: fixesn}). Note that
	$\chi(X^{(t)}) = \chi(X)^{\sigma_t}$. 
	Since $\sigma_t$ fixes all prime ideals above
	$n$ in $\Q(\zeta_v)$,
	we conclude that $\Z[\zeta_v]\chi(X^{(t)})$ and $\Z[\zeta_v]\chi(X)$ have the
	same prime ideal factorization. 
	Since $\chi(X)\overline{\chi(X)} \equiv 0\ (\bmod\ n)$, we conclude
	$\chi(F) = \chi(X^{(t)}) \overline{\chi(X)} \equiv 0\ (\bmod\ n)$.
	Since $(n,|G|)=1$, Result \ref{inversion} implies
	$F \equiv 0\ (\bmod\ n)$, say $F=nE$ with $E\in \Z[\zeta_u][G]$. 
	Then $|\chi(E)|^2 = 1$ for all characters $\chi$ of $G$. 
	Hence $EE^{(-1)}=1$ by Result \ref{inversion} and thus $E=\pm \zeta_u^eg$
	for some integer $e$ and $g \in G$ by Lemma \ref{lem: fixes1}.
	
	\medskip
	
	We conclude $$\pm \zeta_u^egnX = EnX= FX =X^{(t)}X^{(-1)}X=X^{(t)}n.$$ 
	This implies $X^{(t)}=\pm \zeta_u^{e}gX$.
\end{proof}

\begin{cor} \label{cor:fixingmult0}
	Let $X \in \Z[\zeta_u][G]$ where $G$ is an abelian group and $u$ is a positive integer.
	Let $v=\lcm(u,\exp(G))$. 
	Suppose $t$ is a multiplier of $X$ and $X^{(t)} = \pm \zeta_u^e gX$ for some $g \in G$ and integer $e$. 
	\begin{enumerate}[{\normalfont (a)}]
	\item If $\gcd(t-1, u) = 1$, then there is a translate $Y$ of $X$ where $Y^{(t)} = \pm gY$.          
	\item If $\gcd(t-1, \exp(G))= 1$, then there is a translate $Y$ of $X$ where $Y^{(t)} = \pm \zeta_u^e Y$.
	\item If $\gcd(t-1, v)=1$, then there is a translate $Y$ of $X$ where $Y^{(t)} = \pm Y.$
	\end{enumerate}
\end{cor}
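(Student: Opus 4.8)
The plan is to exhibit the required translate $Y$ explicitly in the form $Y = \zeta_u^a h X$ for a suitably chosen $h \in G$ and integer $a$, and then to verify the multiplier identity by direct substitution, using the $\gcd$ hypotheses only to solve the resulting congruence in $\Z/u\Z$ and the resulting equation in $G$. The underlying computation is uniform: for $Y = \zeta_u^a h X$ one has
$$Y^{(t)} = \zeta_u^{at} h^t X^{(t)} = \pm \zeta_u^{at+e} h^t g X,$$
since $X^{(t)} = \pm \zeta_u^e g X$ by hypothesis. Comparing this with $Y = \zeta_u^a h X$ reduces each part to matching the $\zeta_u$-exponent modulo $u$ and matching the $G$-part, and the sign $\pm$ simply propagates from the given one.

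For part (a) I would take $h = 1$ and pick $a$ with $a(t-1) \equiv -e \pmod{u}$, which is solvable because $\gcd(t-1,u)=1$; then $Y = \zeta_u^a X$ is a translate of $X$ and $Y^{(t)} = \pm \zeta_u^{at+e} g X = \pm \zeta_u^a g X = \pm g Y$. For part (b) I would take $a = 0$ and pick $h \in G$ with $h^{t-1} = g^{-1}$; such an $h$ exists because the endomorphism $k \mapsto k^{t-1}$ of the finite abelian group $G$ is injective — its kernel consists of elements whose order divides $\gcd(t-1,\exp(G)) = 1$ — hence it is bijective. Then $Y = hX$ is a translate of $X$ and $Y^{(t)} = \pm \zeta_u^e h^t g X = \pm \zeta_u^e h X = \pm \zeta_u^e Y$, since $h^t g = h$ is equivalent to $h^{t-1} = g^{-1}$.

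Part (c) follows by combining the two. Since $u \mid v$ and $\exp(G) \mid v$, the hypothesis $\gcd(t-1,v)=1$ implies both $\gcd(t-1,u)=1$ and $\gcd(t-1,\exp(G))=1$. Applying part (a) first gives a translate $Y_1$ of $X$ with $Y_1^{(t)} = \pm g Y_1$; applying the argument of part (b) to $Y_1$ (whose $\zeta_u$-exponent is now $0$) gives a further translate $Y = h Y_1$ of $Y_1$, hence of $X$, with $Y^{(t)} = \pm Y$. Alternatively one may solve both conditions at once with $Y = \zeta_u^a h X$. I do not expect a genuine obstacle here: the only points needing care are that a translate of a translate is again a translate of $X$, that the sign carries through the substitution unchanged, and the elementary fact that raising to the power $t-1$ permutes $G$ when $\gcd(t-1,\exp(G))=1$ — the content of the corollary is bookkeeping rather than any deep step.
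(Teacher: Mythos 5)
Your proposal is correct and follows essentially the same route as the paper: it exhibits the translate explicitly as $Y=\zeta_u^a hX$, solving $a(t-1)\equiv -e \pmod u$ for (a), choosing $h$ with $h^{t-1}=g^{-1}$ for (b), and combining the two for (c). You merely spell out details the paper leaves implicit (the verification $Y^{(t)}=\zeta_u^{at}h^tX^{(t)}$ and the bijectivity of $k\mapsto k^{t-1}$ when $\gcd(t-1,\exp(G))=1$).
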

	
	\begin{proof}
	If $\gcd(t-1, u) = 1$, let $Y= \zeta_u^fX$, where $f$ is an integer such that $(1-t)f \equiv e \bmod{u}$, then we get $Y^{(t)} = \pm gY$. 
	If $\gcd(t-1, \exp(G))= 1$, let $Y= hX$ where $h \in G$ satisfies $h^{t-1} = g^{-1}$, then we get $Y^{(t)} = \pm \zeta_u^e Y$.
	If $\gcd(t-1, v)=1$, let $Y = \zeta_u^fhX$, then we get $Y^{(t)} = \pm Y.$ 
	\end{proof}

\begin{cor} \label{cor:fixingmult}
	Let $p$ be an odd prime and $u$ be a positive integer coprime to $p$. 
	Let $V$ be an abelian $p$-group of exponent $p^d$. 
	Let $X \in \Z[\zeta_u][V]$ such that $X\overline{X} = k^2$ for some positive integer $k$ coprime to $p$. 
	Suppose $t$ is a multiplier of $X$ where $t \equiv 1 \bmod{u}$. 
	Then we can replace $X$ by a translate so that $X^{(t)} = X$.
\end{cor}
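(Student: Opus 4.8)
The plan is to strip the cyclotomic scalar from the multiplier relation, then pin down the group element that appears using characters, and finally cancel it by an explicit translate. Since $t$ is a multiplier, we may write $X^{(t)}=\pm\zeta_u^e gX$ for some $g\in V$ and $e\in\Z$. First I would apply the augmentation $\varepsilon\colon\Z[\zeta_u][V]\to\Z[\zeta_u]$: because $t\equiv 1\pmod u$ the automorphism $\sigma_t$ fixes $\Q(\zeta_u)$ pointwise, so $\varepsilon(X^{(t)})=\varepsilon(X)$, whereas $\varepsilon(X^{(t)})=\pm\zeta_u^e\varepsilon(X)$. Since $X\overline X=k^2$ forces $\varepsilon(X)\overline{\varepsilon(X)}=k^2\neq 0$, we have $\varepsilon(X)\neq 0$, hence $\pm\zeta_u^e=1$ and $X^{(t)}=gX$.

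The main work is to show $g\in V^{t-1}:=\{x^{t-1}:x\in V\}$. Write $t-1=p^{s}w$ with $\gcd(w,p)=1$. I would evaluate $X^{(t)}=gX$ at each character $\chi$ of $V$ of order dividing $p^{s}$. For such a $\chi$, every value $\chi(x)$ is a root of unity of order dividing $p^{s}\mid t-1$, so $\chi(x)^{t}=\chi(x)$; since $\sigma_t$ also fixes the coefficients of $X$, this gives $\chi(X^{(t)})=\sum_{x}X_x\chi(x^{t})=\sum_{x}X_x\chi(x)=\chi(X)$. Comparing with $\chi(X^{(t)})=\chi(g)\chi(X)$ and using $\chi(X)\neq 0$ (a free consequence of $\chi(X)\overline{\chi(X)}=k^2$) yields $\chi(g)=1$. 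Thus every character of $V$ that is trivial on $V^{p^{s}}$ also kills $g$; since such characters separate the points of $V/V^{p^{s}}$, we get $g\in V^{p^{s}}$. Finally, as $\exp(V)$ is a power of $p$ and $\gcd(w,p)=1$, the map $x\mapsto x^{w}$ is an automorphism of $V$, so $V^{p^{s}}=V^{t-1}$ and hence $g\in V^{t-1}$.

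To conclude I would pick $h\in V$ with $h^{t-1}=g$ and set $Y=h^{-1}X$, which is a translate of $X$. Using that $(\cdot)^{(t)}$ is multiplicative on $\Z[\zeta_u][V]$,
\[
Y^{(t)}=h^{-t}X^{(t)}=h^{-t}gX=h^{-t}g(hY)=(h^{1-t}g)Y=Y,
\]
since $h^{1-t}=g^{-1}$. This is the required translate.

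The step I expect to be the real obstacle is the middle paragraph: recognising that one should test the relation $X^{(t)}=gX$ precisely against the characters of order dividing $p^{s}$ — the range on which $\sigma_t$ acts as the identity — and that $X\overline X=k^2$ supplies $\chi(X)\neq 0$ for all $\chi$ at no cost, which together force $g$ into $V^{t-1}$. This is exactly the case $p\mid t-1$ left open by \Cref{cor:fixingmult0}(b), so a direct argument of this kind seems unavoidable. The augmentation step, the identification of the common kernel with $V^{p^{s}}$, and the closing computation are all routine.
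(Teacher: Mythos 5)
Your proof is correct and complete, but it takes a genuinely different route from the paper's. The paper first deduces $\gcd(t-1,p)=1$ from $u\mid t-1$ and $\gcd(u,p)=1$, uses Corollary~\ref{cor:fixingmult0}(b) to absorb the group element $g$ into a translate so that $X^{(t)}=zX$ with $z=\pm\zeta_u^e$, and then eliminates $z$ by an orbit decomposition of $V$ under $g\mapsto g^t$: it shows $z^{\ord_p(t)}=1$, so that $\rho(X)=X_1$ for the trivial character $\rho$, whence $z\neq 1$ would force $\rho(X)=0$, contradicting $|\rho(X)|^2=k^2$. You perform the two reductions in the opposite order and by different means: the augmentation disposes of the cyclotomic scalar in one line (in effect the paper's orbit argument collapsed to the single fact $\varepsilon(X)\neq 0$), and the group element is then pushed into $V^{t-1}$ by testing $X^{(t)}=gX$ against the characters of order dividing the $p$-part of $t-1$. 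What your version buys is that it never needs $\gcd(t-1,p)=1$: that condition does not in fact follow from the stated hypotheses (e.g.\ $u=4$, $p=3$, $t=13$ has $u\mid t-1$ and $\gcd(u,p)=1$, yet $p\mid t-1$), so the paper's appeal to Corollary~\ref{cor:fixingmult0} is, as written, a gap, and your middle paragraph is exactly the argument needed to cover the case $p\mid t-1$; in the paper's intended applications, where $\ord_p(t)>1$, the two proofs have the same scope. Every individual step you give --- $\varepsilon(X)\neq 0$ and $\chi(X)\neq 0$ from $X\overline{X}=k^2$, the identification of the characters of order dividing $p^{s}$ with those trivial on $V^{p^{s}}$, the equality $V^{p^{s}}=V^{t-1}$ via the automorphism $x\mapsto x^{w}$, and the closing computation $Y^{(t)}=h^{1-t}gY=Y$ --- checks out.
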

	
	\begin{proof}
	Since $t - 1 \equiv 0 \bmod{u}$ and $\gcd(u,p)=1$, we deduce that $\gcd(t-1,p)=1$. It follows from \Cref{cor:fixingmult0} that we may assume 
	$X^{(t)} = z X$
	where $z=\epsilon \zeta_u^e$ for some $\epsilon \in \{ \pm 1\}$ and integer $e$.
	For a group element $g$ in $V$, let $X_g$ be the cyclotomic coefficient of $g$ in $X$. Then
	$X_{g}^{(t)} = z X_{g^t}.$ 
	Note that $t \equiv 1 \bmod{u}$ implies $X_{g}^{(t)} = X_{g}$. 
	We conclude 
	\begin{equation}\label{eq:thm158_100}
	X_{g} =z X_{g^t}.
	\end{equation}
	In particular, when $g=1$, we have $X_1=zX_1.$
	This implies either $z=1$ or $X_1=0$. 
	Let $\rho$ be the trivial character of $V$. 
	Then $\rho(X)=\sum_{g \in V} X_g.$ 
	Consider the sequence of orbits under the map $g \mapsto g^t$ on $V$, sorted in non-decreasing order of the orbit size. 
	Let $s_i$ denote the size of the $i$-th orbit. 
	Then (\ref{eq:thm158_100}) implies 
	$$\rho(X) = X_1 + \sum_{i >1} Y_i(1+z+z^2+\ldots+z^{s_i-1})$$ where $Y_i$ is the cyclotomic coefficient of an element $g_i$ in $X$, where $g_i$ belongs to the $i$-th orbit.
	In the following, we show that $z$ is a $w$-th root of unity. 
	Let $s=\ord_{p^d}(t)$ and $w=\ord_{p}(t)$.
	Using (\ref{eq:thm158_100}), we have 
	$$X_{g} = z^s X_{g^{t^s}} = z^s X_{g}$$ for all $g \in V$. 
	This implies either $z^s = 1$ or $X_{g} = 0$ for all $g \in V$. 
	The latter is impossible, as otherwise $X = 0$. 
	Hence, $z^s = 1$. 
	Since $s$ is divisible by $w$ and $\gcd(p, 2u) = 1$, we deduce $z^w =1$. 
	As a result, for any $i>1$, since $s_i$ is a multiple of $w$, we have $1+z+z^2+\ldots+z^{s_i-1}=0$ if $z$ is not equal to $1$. 
	Consequently, $\rho(X)=0$. This cannot be true because $\rho(X)\overline{\rho(X)}=k^2$. Hence, $z=1$.
	\end{proof}

\section{Results}
We divide our non-existence results on group invariant weighing matrices into three subsections based on the different approaches employed: the field descent method, the generalized multiplier theorems, and the rational idempotent/Weil number approach.

\subsection{Field Descent Method}
In the following, we apply the F-bound stated as Result \ref{Fbound} to give an upper bound on the weight of a group invariant weighing matrix. 

\begin{cor} \label{Fbound_IW}
Let $G$ be an abelian group of order $v$.
Let $w$ be the exponent of $G$ and let $h=v/w$. 
If $D$ is a $G$-invariant $IW_a(v,n)$, then $$n \leq \frac{a^2h^2F(w,n)^2}{ \varphi(F(w,n))}.$$
\end{cor}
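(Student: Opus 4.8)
The plan is to reduce the statement about the group-invariant integer weighing matrix $D \in \Z[G]$ to a statement about a single cyclotomic integer $\chi(D)$, to which the F-bound (Result \ref{Fbound}) can be applied directly. First I would recall from Result \ref{character_GW} that since $D$ is a $G$-invariant $IW_a(v,n)$, we have $\chi(D)\overline{\chi(D)} = n$ for every character $\chi \in \hat{G}$. I would then fix one character $\chi$ of maximal order; since $w = \exp(G)$, there exists $\chi \in \hat{G}$ of order exactly $w$, so $\chi(D) \in \Z[\zeta_w]$ and $\chi(D)$ does not lie in any proper subfield $\Z[\zeta_{w'}]$ with $w' \mid w$, $w' < w$ (or at least its construction via $\chi$ makes the coefficient bound below available).

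Next I would write $\chi(D) = \sum_{g \in G} a_g \chi(g)$ where the $a_g$ are the coefficients of $D$, each satisfying $|a_g| \le a$. Grouping the group elements $g$ according to the value of $\chi(g) \in \{1, \zeta_w, \ldots, \zeta_w^{w-1}\}$, and noting that the kernel of $\chi$ has order $v/w = h$ so each fiber $\{g : \chi(g) = \zeta_w^i\}$ has exactly $h$ elements, we get
$$\chi(D) = \sum_{i=0}^{w-1} b_i \zeta_w^i, \qquad b_i = \sum_{g:\, \chi(g) = \zeta_w^i} a_g,$$
and hence $|b_i| \le ah$ for each $i$. This is precisely the hypothesis of Result \ref{Fbound} with the constant $C = ah$ and with $v$ there replaced by $w$: we have $\chi(D) \in \Z[\zeta_w]$, written as $\sum_{i=0}^{w-1} b_i \zeta_w^i$ with $|b_i| \le ah$, and $n = \chi(D)\overline{\chi(D)}$ an integer. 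Applying Result \ref{Fbound} then yields immediately
$$n \le \frac{(ah)^2 F(w,n)^2}{\varphi(F(w,n))} = \frac{a^2 h^2 F(w,n)^2}{\varphi(F(w,n))}.$$

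The only real subtlety — and the step I expect to need the most care — is the coefficient bound $|b_i| \le ah$: one must be sure that the fibers of $\chi$ all have size exactly $h$, which uses that $\chi$ is surjective onto the group of $w$-th roots of unity (true because $\chi$ has order $w$) so $|\ker \chi| = v/w = h$. A secondary point is that Result \ref{Fbound} is stated for elements of $\Z[\zeta_v]$ expressed in the power basis $\{\zeta_v^i : 0 \le i \le v-1\}$ rather than in the reduced integral basis $B$ of Result \ref{integralbasis}; but the statement of Result \ref{Fbound} as given already allows the redundant spanning set $\{1, \zeta_v, \ldots, \zeta_v^{v-1}\}$ with a coefficient bound $C$, so no conversion is needed and we may apply it verbatim with $v \mapsto w$, $C \mapsto ah$. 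Everything else is bookkeeping.
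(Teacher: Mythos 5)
Your proposal is correct and follows essentially the same route as the paper: choose a character $\chi$ of order $w$, observe that summing the coefficients of $D$ over the fibers of $\chi$ (cosets of a subgroup of order $h$, which in the paper appears as the complement $H$ in a decomposition $G=C_w\times H$ with $\chi$ trivial on $H$) gives $\chi(D)=\sum_{i=0}^{w-1}b_i\zeta_w^i$ with $|b_i|\le ah$, and then apply Result \ref{character_GW} together with the F-bound (Result \ref{Fbound}) with $C=ah$. The only cosmetic difference is that you work with $\ker\chi$ directly instead of fixing a direct-product decomposition, which is equivalent.
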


\begin{proof}
	Let $H$ be a subgroup of $G$ of order $h$ such that $G/H$ is cyclic.
	Write $G = C_w \times H$ and let $g$ be a generator of $C_w$. 
	Write $D = \sum_{i=0}^{w-1} \sum_{h \in H}a_{i,h}g^ih$, where $|a_{i,h}| \leq a$. 
	Let $\chi$ be a character of $G$ of order $w$ which is trivial on $H$. 
	Then $\chi(D)=\sum_{i=0}^{w-1} b_{i}\zeta_u^i$ where $|b_{i}| \leq ah$. 
	By Result \ref{character_GW}, $|\chi(D)|^2=n$. 
	Now the assertion is proved using the F-bound (\Cref{Fbound}). 
\end{proof}

This result alone is enough to rule out some infinite families of group invariant weighing matrices. 
In particular, we settle the following open cases from Strassler's table and the table of group invariant weighing matrices in \cite{arasu2013group}.

\begin{thm} \label{thm_fielddescent}
$CW(v, n)$ do not exist for all parameters $(v, n)$ listed in Table \ref{tableF}. In the table, $c$, $d$, $e$ and $f$ are any non-negative integers. 
In addition, let $v$, $w$ and $n$ be as listed in Table \ref{tableF2}. Then $G$-invariant $IW(v, n)$ does not exist for any abelian group $G$ where its order is $v$ and its exponent is divisible by $w$.
Note that the last column of the Table \ref{tableF} shows the particular values of $v$ and $n$ which correspond to some previously 
open cases in Strassler's table.
The second column of Table \ref{tableF} showcases the $F$-value for each case.
\end{thm}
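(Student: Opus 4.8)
The plan is to deduce everything from the F-bound corollary (\Cref{Fbound_IW}) by a case-by-case verification. For each row of Table~\ref{tableF}, I would first compute $F(w,n)$ where $w=\exp(G)$; in the circulant case $v=w$ and $h=1$, so \Cref{Fbound_IW} with $a=1$ forces $n\le F(v,n)^2/\varphi(F(v,n))$. The strategy is then to show that for the listed parameter families this inequality fails, which is the desired contradiction. Concretely, I would: (i) determine $\mathcal D(n)$ and $\mathcal D(v)$ from the parametrized form of $(v,n)$; (ii) for each prime $q\mid n$ compute $\mu_q$ and then ${\rm ord}_{\mu_q}(q)$, and thence the exponents $b(p,v,n)$ via \Cref{defi:F-value}; (iii) assemble $F(v,n)=\gcd\bigl(v,\prod_{p\in\mathcal D(v)}p^{b(p,v,n)}\bigr)$ and check the numerical inequality $n\varphi(F(v,n))>F(v,n)^2$. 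Because $c,d,e,f$ range over all non-negative integers, the key observation is that $F(w,n)$ is essentially \emph{bounded} (or grows far slower than $n$) along each family: raising, say, the power of a prime dividing $v$ but not $n$ does not increase $b(p,v,n)$ past a fixed value, so $F(w,n)$ stabilizes while $n$ can be made large, or $n$ is already large relative to the stable $F$-value. That is exactly what the ``second column shows the $F$-value for each case'' remark is telling us to exploit.

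For Table~\ref{tableF2} the argument is identical except that now $h=v/w$ may be nontrivial and $a=1$, so the relevant inequality from \Cref{Fbound_IW} is $n\le h^2F(w,n)^2/\varphi(F(w,n))$; I would verify this fails for the three listed triples $(v,w,n)$ by the same computation of $F(w,n)$, noting that the hypothesis only requires $w\mid\exp(G)$, and since $F$ is monotone enough in its first argument (a larger exponent divisible by $w$ only enlarges the gcd in a controlled way, or one replaces $w$ by $\exp(G)$ directly in the bound) the conclusion is uniform over all admissible $G$. In fact the cleanest phrasing is: apply \Cref{Fbound_IW} with the actual exponent $w'=\exp(G)$, observe $w\mid w'$ and $v/w'$ divides $v/w$, and bound $F(w',n)$ — but since $n$ is fixed and small in number, a direct finite check suffices.

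The main obstacle is purely computational bookkeeping: correctly evaluating ${\rm ord}_{\mu_q}(q)$ and the $2$-adic/$r$-adic valuations in \Cref{defi:F-value} for each family, and being careful about the special conventions ($b(2,v,n)=2$ when $\mathcal D(n)=\{2\}$, $b(r,v,n)=1$ when $\mathcal D(n)=\{r\}$, empty product $=1$) and about the odd/even split in the definition of $\mu_q$. A secondary subtlety is that in the parametrized families one must confirm that the primes appearing in $v$ but not in $n$ genuinely contribute a bounded power to $F$: this needs the fact that $b(p,v,n)$ depends on $v$ only through $\mu_q=\prod_{p'\in\mathcal D(v)\setminus\{q\}}p'$ (a \emph{squarefree} radical), so multiplicities in $v$ are invisible to the $b(p,v,n)$ except through the outer $\gcd(v,\cdot)$ — hence $F(w,n)$ along each family is bounded by a fixed integer, and the inequality $n\varphi(F)>F^2$ holds for all large parameter values and is checked by hand for the finitely many small ones. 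Once these valuations are tabulated (which is what the table's $F$-value column records), each non-existence claim is immediate from \Cref{Fbound_IW}. I would therefore present the proof as: ``By \Cref{Fbound_IW} it suffices to show $n>F(w,n)^2/\varphi(F(w,n))$ (resp.\ $n>h^2F(w,n)^2/\varphi(F(w,n))$); the values of $F(w,n)$ are as recorded in the tables, and the inequality is verified directly,'' followed by one or two representative sample computations and a remark that the rest are analogous.
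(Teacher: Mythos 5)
Your proposal is correct and follows exactly the paper's route: the paper offers no argument beyond tabulating $F(w,n)$ and declaring the parameters ``in violation of Corollary \ref{Fbound_IW}'', which is precisely your plan of computing $F$ via Definition \ref{defi:F-value} and checking that $n>a^2h^2F(w,n)^2/\varphi(F(w,n))$ for each family, including the observation that $F$ stays bounded as $c,d,e,f$ grow while $n$ does not. Your extra care about Table \ref{tableF2} (applying the bound with the actual exponent $w'$ and $h'=v/w'$) fills in a detail the paper leaves implicit.
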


\begin{table}[!h]
\caption {\Cref{tableF}: $CW(v,n)$s in violation of Corollary \ref{Fbound_IW} } \label{tableF}
\centering
\begin{tabular}{|l|l|l|}
\hline
$	CW(	v	,	n	)	$ & $	F(v,n)	$ & $	CW(	v	,	n	)	$	\\	\hline
$	(	2^c 	,	7^{2e + 2}	)	$ & $	2^4	$ & $	(	128	,	49	)	$	\\	\hline
$	(	3.7^d	, 	2^{2e+6}	)	$ & $	3.7	$ & $	(	147	,	64	)	$	\\	\hline
$	(	3^c13^d 	,	3^{2f+4}	)	$ & $	3.13	$ & $	(	117	,	81	)	$	\\	\hline
$	(	2^c5^d 	,	2^{2e+2}5^{2f+2}	)	$ & $	2^2. 5	$ & $	(	160	,	100	)	$	\\	\hline
$	(	2^c11^d 	,	2^{2e+2}5^{2f+2}	)	$ & $	2^2.11	$ & $	(	176	,	100	)	$	\\	\hline
$	(	2^c3^d 	,	2^{2e+2}5^{2f+2}	)	$ & $	2^3.3	$ & $	(	192	,	100	)	$	\\	\hline
\end{tabular}
\end{table}

\begin{table}[!h]
\caption {\Cref{tableF2}: $IW(v,n)$s in violation of Corollary \ref{Fbound_IW} } \label{tableF2}
\centering
\begin{tabular}{|l|l|l|}
\hline
$	IW(	v	,	n	)	$ & $	w	$  	\\	\hline
$   (2^c, 2^{2e+6})$ & $2^{c-1}$ \\ \hline
$	(	3^c 	,	7^{2e + 2}	)	$ & $ 3^{c-1}$ 	\\	\hline
\end{tabular}
\end{table} 

Next we utilize another important concept of the field descent method: the decomposition of group ring elements into two parts, 
one part corresponding to the subfield given by the field descent and a second part corresponding to the kernel of a map from 
the integral group ring to a group ring with cyclotomic integers as coefficients. See \cite[Thm. A]{LeungSchmidt} for the details. 
The following non-existence result relies upon such a decomposition. 

\begin{thm} \label{thm105_81}
Let $a$, $b$, $e$, $w$ be positive integers where $w$ is odd
and let $p$ be an odd prime with $\gcd(p,w)=1$. 
Suppose that $\gcd(p-1,w)=1$ or that ${\rm ord}_s(p)$ is even for 
every prime divisor $s$ of $\gcd(p-1,w)$. 
If a proper $ICW_a(p^bw,p^{2e})$ exists, then $p\le 4a$ and an $ICW_{2a}(w,p^{2e})$ exists.
\end{thm}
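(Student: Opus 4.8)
The plan is to exploit the generalized multiplier theorem (Theorem~\ref{thm:fixesn}) together with the field descent. Let $G = C_{p^b} \times C_w$ and let $D \in \Z[G]$ be a proper $ICW_a(p^bw, p^{2e})$, so $DD^{(-1)} = p^{2e}$ and $\chi(D)\overline{\chi(D)} = p^{2e}$ for all characters $\chi$ of $G$. First I would show that $p$ itself fixes all prime ideals above $p$ in $\Z[\zeta_v]$, where $v = \lcm(p^b, w)$: since $\gcd(p-1,w)=1$ (or the ord-condition holds), the decomposition group of a prime above $p$ in $\Q(\zeta_v)/\Q$ is generated by $\sigma_p$ on the prime-to-$p$ part and this leaves the relevant ideals fixed. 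Hence by Theorem~\ref{thm:fixesn}, $t=p$ is a multiplier of $D$, i.e. $D^{(p)} = \pm g D$ for some $g \in G$ (there is no $\zeta_u$ factor here since $u=1$). Since $p$ is odd and coprime to $w$, $\gcd(p-1, \exp C_w) = 1$, so after replacing $D$ by a translate and applying Corollary~\ref{cor:fixingmult} (with the $p$-group being $C_{p^b}$ and $X \in \Z[C_w][C_{p^b}]$ obtained by grouping coefficients), we may assume $D^{(p)} = D$, i.e. $D$ is fixed by the map $g \mapsto g^p$.

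Next I would analyze the structure of a $p$-invariant element. Write $P$ for the subgroup of order $p$ in $C_{p^b}$. The orbits of $g \mapsto g^p$ on $C_{p^b}$ have sizes dividing $\ord_p(\text{stuff})$; more usefully, the standard consequence (as in Result~\ref{Ma}, or directly) is that $D$ being $p$-fixed forces $D$ to be constant on cosets of $P$ away from the identity component — more precisely, projecting $C_{p^b} \to C_{p^b}/P = C_{p^{b-1}}$ and iterating, a $p$-fixed element of $\Z[C_{p^b} \times C_w]$ decomposes as $p^{b}$-controlled pieces plus pieces supported on cosets of $P$. The cleanest route: apply the field descent (Result~\ref{Fdescent}) character-by-character. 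For a character $\chi$ of $G$ whose restriction to $C_{p^b}$ has order $p^j$ with $j \geq 1$, $\chi(D) \in \Z[\zeta_{p^j w}]$ has $\chi(D)\overline{\chi(D)} = p^{2e}$; combined with $p$-fixedness (so $\chi(D)^{\sigma_p}$ is a translate/associate of $\chi(D)$) and Turyn/self-conjugacy considerations on the prime-to-$p$ conductor, I expect to force $\chi(D) \equiv 0 \bmod p^e$ for every such $\chi$ — in fact $\chi(D) = \pm p^e$ times a root of unity once the conductor is pushed down. This is where the hypothesis on $w$ (making $p$ self-conjugate-like modulo the relevant moduli) is essential.

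Having $\chi(D) \equiv 0 \bmod p$ for all $\chi$ nontrivial on $C_{p^b}$, Result~\ref{Ma} applied with the cyclic $p$-Sylow $C_{p^b}$ (viewing $D \in \Z[G]$, $G$ abelian with cyclic Sylow $p$-subgroup) gives $D = p X_1 + P X_2$... but we need more: iterating, or using the sharper $\chi(D) = \pm p^e \eta$, I would deduce $D = P \cdot Y$ for some $Y \in \Z[C_{p^{b-1}} \times C_w]$ (the properness of $D$ rules out $D$ itself living in a subgroup ring, but it does not rule out $D$ being divisible by $P$ as a group-ring element, which is the generic shape of a $p$-fixed weighing matrix of $p$-power weight — compare known $CW(p^b, p^{2e})$ examples). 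Writing $D = P Y$ and computing $DD^{(-1)} = p \cdot P \cdot YY^{(-1)} = p^{2e}$ forces $P \cdot YY^{(-1)} = p^{2e-1}$, which via the trivial character on $P$ gives $YY^{(-1)} = p^{2e-1}/|\text{something}|$ — here I must be careful; the correct bookkeeping is that $\bar{\chi}(D) = 0$ whenever $\chi$ is nontrivial on $P$, so all the "weight" is carried by characters trivial on $P$, and $Y := $ the image of $D$ under $C_{p^b} \to C_{p^{b-1}}$ (equivalently $D/P$ up to the factor) satisfies $Y Y^{(-1)} = p^{2e}$ in $\Z[C_{p^{b-1}} \times C_w]$, now with coefficients bounded by $2a$ (each coefficient of $Y$ is a $\pm$-sum of $p$ coefficients of $D$, but after the $p$-fixed collapse only two survive — this is the source of the $2a$, and simultaneously the bound $p \le 4a$: the $p$ values being summed, each in $\{-a,\dots,a\}$, collapse in pairs and the surviving coefficient has size $\le 2a$, while feasibility of the collapse on a full $P$-coset of size $p$ forces $p \le 2 \cdot 2a$).

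Finally I would descend on $b$ by induction: $Y$ is a (possibly improper) $ICW_{2a}(p^{b-1}w, p^{2e})$; repeating the multiplier argument $b-1$ more times reduces to $b=0$, yielding an $ICW_{2a}(w, p^{2e})$, with the coefficient bound stabilizing at $2a$ (each descent step that actually collapses a nontrivial $P$-coset can only happen finitely often before the $p$-part is exhausted, and one checks the $2a$ bound is preserved rather than compounding — the pairing $X_g + X_{g^p} + \cdots$ over an orbit, combined with $z=1$ from Corollary~\ref{cor:fixingmult}, means equal coefficients along orbits, so summing over a $P$-fiber of size $p$ gives a single value repeated, contradiction unless the fiber splits into at most two constant pieces, bounding both the new coefficient by $2a$ and $p$ by $4a$). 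The main obstacle I anticipate is making the "collapse" step fully rigorous: showing precisely that $p$-fixedness plus $\chi(D)\overline{\chi(D)}=p^{2e}$ forces the $P$-fiber structure claimed, and tracking that the coefficient bound is $2a$ uniformly rather than growing with $b$ — this requires the self-conjugacy hypothesis on $w$ to kill any residual root-of-unity ambiguity in the character values, via Result~\ref{selfconjugate_solution2} and Result~\ref{turyn}.
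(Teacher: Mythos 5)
Your central step fails: you propose to apply Theorem \ref{thm:fixesn} (and Corollary \ref{cor:fixingmult}) with $t=p$ to conclude $D^{(p)}=\pm gD$, but that theorem requires $\gcd(n,|G|)=1$, whereas here $n=p^{2e}$ and $|G|=p^bw$ share the prime $p$; worse, $p$ is not coprime to $|G|$, so $p$ cannot be a multiplier in the sense of Definition \ref{defi:cyclotomic_multiplier} at all (the map $x\mapsto x^p$ is not even a bijection of $C_{p^b}$, and $\sigma_p$ is not an element of $\Gal(\Q(\zeta_v)/\Q)$ when $p\mid v$). The whole difficulty of this theorem is precisely that the weight is a power of a prime dividing the order, which is outside the reach of the multiplier machinery of Section \ref{sec:Multiplier}. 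Your later structural claim is also untenable: if $D=C_p\cdot Y$ (your ``$D=PY$''), then $\chi(D)=0$ for every character $\chi$ nontrivial on the subgroup of order $p$, contradicting $|\chi(D)|^2=p^{2e}$; so the ``generic shape'' you posit cannot occur for a weighing matrix, and the inductive descent built on it, together with the hand-waved derivation of the $2a$ coefficient bound and of $p\le 4a$, does not go through. You also never pin down where the hypothesis ``$\gcd(p-1,w)=1$ or $\ord_s(p)$ even for all $s\mid\gcd(p-1,w)$'' is actually needed.

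The paper's argument is genuinely different: it invokes the field-descent decomposition of Leung and Schmidt \cite[Thm.\ 3.4]{LeungSchmidt} to write a translate as $Dh = D_w\sum_{i=1}^{p-1}\bigl((-1)^{\ell}g\bigr)^i\gamma^{p^{b-1}t^i} + YC_p$ with $D_w\in\Z[C_w]$, then eliminates $\ell=1$ by a Gauss-sum norm computation (which would force $|\chi(D_w)|^2=p^{2e-1}$ for an integer $\chi(D_w)$) and eliminates $g\neq 1$ using Result \ref{selfconjugate_solution2} -- this is exactly where the hypothesis on $\gcd(p-1,w)$ enters. With $\ell=0$, $g=1$ one gets $Dh=-D_w+C_p(Y+D_w)$, a coefficient comparison bounds the coefficients of $D_w$ by $2a$, the homomorphism $\gamma\mapsto\zeta_{p^b}$ shows $D_w$ is an $ICW_{2a}(w,p^{2e})$ in one step (no induction on $b$), and $p\le 4a$ follows from a sum-of-squares count over $C_p$-cosets that uses properness to produce a coset with nonzero ``$c$''. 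To repair your proposal you would need to replace the multiplier step by this (or an equivalent) decomposition theorem; as written, the key idea is not available.
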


\begin{proof} 
	Write $v=p^bw$ and let $D$ be an $ICW_a(v,p^{2e})$. 
	Let $t$ be a primitive root mod $p$ and let $\gamma$ be generator of $C_{p^b}$. 
	Since $\gcd(p^b,F(p^bw,p^{2e}))=p$, by \cite[Thm.\ 3.4]{LeungSchmidt}, there is $h\in C_v$ such that
	\begin{equation} \label{e1}
	Dh = D_w\sum_{i=1}^{p-1}\left((-1)^{\ell}g\right)^i\gamma^{p^{b-1}t^i} + YC_p
	\end{equation}
	where $D_w\in \Z[C_w]$, $\ell\in\{0,1\}$, $Y\in \Z[C_v]$, and $g$ is an
	element of $C_w$ of order dividing $p-1$. 
	
	\medskip
	
	We first show $\ell=0$. Suppose $\ell=1$. Let $\chi$ be character of
	$C_v$ with $\chi(\gamma^{p^{b-1}})=\zeta_p$ which is trivial on $C_w$. 
	Then $\chi(Dh)=\chi(D_w)\sum_{i=1}^{p-1}(-1)^i\zeta_p^{t^i}$. 
	Note that $|\sum_{i=1}^{p-1}(-1)^i\zeta_p^{t^i}|^2=p$ by
	\cite[Prop. 8.2.2, p.\ 92]{IrelandRosen}. As $|\chi(D)|^2=p^{2e}$, we conclude
	$|\chi(D_w)|^2=p^{2e-1}$, which is impossible, as $\chi(D_w)$ is an integer. 
	This proves $\ell=0$. 
	
	\medskip
	
	Next, we show $g=1$. Suppose $g\neq 1$. As the order of $g$ divides $p-1$,
	there is a prime divisor $s$ of $\gcd(p-1,w)$ which divides the order of
	$g$. Furthermore, ${\rm ord}_s(p)$ is even by assumption. 
	Let $\tau$ be a character of $C_v$ with $\tau(\gamma^{p^{b-1}})=\zeta_p$
	and $\tau(g)=\zeta_s$ such that the order of $\tau$ is $p^{b}s^m$, 
	for some positive integer $m$. Then
	$\tau(Dh)=\tau(D_w)\sum_{i=1}^{p-1}\zeta_s^i\zeta_p^{t^i}$
	by (\ref{e1}), as $\ell=0$. 
	Note that $|\sum_{i=1}^{p-1}\zeta_s^i\zeta_p^{t^i}|^2=p$ 
	by \cite[Prop. 8.2.2, p.\ 92]{IrelandRosen}.
	As $|\tau(D)|^2=p^{2e}$, we conclude $|\tau(D_w)|^2=p^{2e-1}$. 
	Note that $\tau(D_w)\in \Z[\zeta_{s^m}]$ and ${\rm ord_s(p)}$
	is even by assumption. Hence $2e-1$ is even by Result \ref{selfconjugate_solution2}, a
	contradiction. This shows $g=1$. 
	
	\medskip
	
	As $g=1$ and $\ell=0$, we get
	\begin{equation} \label{e2}
	Dh = D_w(C_p-1)+Y(C_p)=-D_w +C_p(Y + D_w)
	\end{equation}
	from (\ref{e1}). 
	Suppose that $k\in C_w$ has coefficient $c_1$ in $D_w$ with $|c_1|>2a$. 
	As $Dh$ is an $ICW_a(v,p^{2e})$, this implies that $k$ has a coefficient 
	$c_2$ in $C_p(Y + D_w)$ with $|c_2|>a$. But then the coefficient $c_3$ of $k\gamma^{p^{b-1}}$
	in $Dh$ satisfies $|c_3|>a$ by (\ref{e2}). This contradicts the 
	assumption that $D$ is an $ICW_a(v,p^{2e})$. Hence all coefficients 
	of $D_w$ are at most $2a$ in absolute value. 
	Let $\rho: \Z[C_v]\to \Z[\zeta_{p^{b}}]\Z[C_w]$ be the homomorphism
	defined by $\rho(k)=k$ for $k\in C_w$ and $\rho(\gamma)=\zeta_{p^b}$. 
	Then $\rho(Dh)=-D_w$ by (\ref{e2}). As $\rho(Dh)\rho(Dh)^{(-1)}=p^{2e}$,
	this implies that $D_w$ is an $ICW_{2a}(w,p^{2e})$. 
 
	 \medskip
	 
	 It remains to show $p\le 4a$. Suppose $p>4a$. As $D$ is proper by assumption,
	 we have $D\neq -D_w$. 
	 Write $Dh=\sum_{k\in C_v}d_kk$ and $D_w=\sum_{k\in C_v}e_kk$, where $e_k = 0$ if $k \not \in C_w$. 
	 Note that
	 \begin{equation} \label{e2a}
	\sum_{k\in C_v} d_k^2=\sum_{k\in C_v} e_k^2 = p^{2e}.
	\end{equation}
	 Suppose the coefficient of $z\in C_v$ in $C_p(Y + D_w)$ is $c$.
	This implies that the coefficient
	 of each element of $C_pz$ in $C_p(Y + D_w)$ is equal to $c$. 
	 By (\ref{e2}), we have
	 $d_k = -e_k+c$ for all $k\in C_pz$. 
	 Furthermore, as $D_w\in \Z[C_w]$,
	 there is $x\in C_pz$ such that $e_k=0$ for 
	 all $k\in C_pz$, $k\neq x$. Thus
	 $$\sum_{k\in C_pz} d_k^2 = (-e_x+c)^2 + (p-1)c^2 = pc^2 -2ce_x+e_x^2. $$
	Hence
	\begin{equation} \label{e3}
	\sum_{k\in C_pz} (d_k^2-e_k^2)=(pc^2 -2ce_x+e_x^2)-e_x^2=pc^2-2ce_x.
	\end{equation}
	We have $|e_x|\le 2a$, as $D_w$ is an $ICW_{2a}(w,p^{2e})$. 
	 As $p>4a$ by assumption, we have $|2e_x|<p$. 
	Note that the parabola $f(t)=pt^2-2te_x$ attains its minimum for $t=e_x/p$ and we have $|e_x/p|<1$.
	As $c$ is an integer, we conclude
	$$pc^2-2ce_x\ge \min\{f(0),f(-1),f(1)\}=\min\{0,p\pm 2e_x\}=0$$ with equality if and only if $c=0$. 
	Hence
	\begin{equation} \label{e4}
	\sum_{k\in C_pz} (d_k^2-e_k^2)\ge 0
	\end{equation}
	by (\ref{e3}) with equality if and only if $c=0$. 
	
	\medskip
	
	As $D$ is proper by assumption, we have $Dh\neq D_w$. Hence there is $z\in C_v$
	such that the coefficient of $z$ in $C_p(Y + D_w)$ is nonzero. 
	Hence
	$\sum_{k\in C_v} d_k^2>\sum_{k\in C_v} e_k^2$ by (\ref{e4}), which contradicts
	(\ref{e2a}). Thus $p\le 4a$.
\end{proof}

As a consequence, we have the following result. 
A weaker version of this corollary is contained in \cite[Thm. 3.1]{ArasuMa2001b}.

\begin{cor}
Let $b,e,w$ be positive integers where $w$ is odd
and let $p>3$ be a prime.
Suppose that $\gcd(p-1,w) = 1$ or $\ord_s(p)$ is even for every prime divisor $s$ of $\gcd(p-1,w)$. 
Then there is no proper $CW(p^bw,p^{2e})$. 
\end{cor}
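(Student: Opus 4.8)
The plan is to obtain this as an immediate specialization of \Cref{thm105_81} with $a=1$, since a proper $CW(v,n)$ is by definition exactly a proper $ICW_1(v,n)$. First I would dispose of the possibility that $p \mid w$: writing $w = p^c w'$ with $c \geq 1$ and $\gcd(p,w')=1$, a proper $CW(p^bw,p^{2e})$ is a proper $CW(p^{b+c}w',p^{2e})$, and the standing hypothesis transfers from $w$ to $w'$ because every prime divisor of $\gcd(p-1,w')$ is a prime divisor of $\gcd(p-1,w)$ (and, likewise, $\gcd(p-1,w)=1$ forces $\gcd(p-1,w')=1$); moreover $w'$ inherits oddness from $w$. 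So there is no loss in assuming $\gcd(p,w)=1$.

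Next, suppose for contradiction that a proper $CW(p^bw,p^{2e})$ exists. All hypotheses of \Cref{thm105_81} then hold with $a=1$: $b$, $e$, $w$ are positive integers, $w$ is odd, $p$ is an odd prime (as $p>3$) with $\gcd(p,w)=1$, and either $\gcd(p-1,w)=1$ or $\ord_s(p)$ is even for every prime divisor $s$ of $\gcd(p-1,w)$. Applying the theorem yields $p \leq 4a = 4$. But a prime $p$ with $p>3$ satisfies $p \geq 5$, a contradiction. Hence no proper $CW(p^bw,p^{2e})$ exists.

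There is essentially no obstacle in this argument — all of the content resides in \Cref{thm105_81}. The only points that need (entirely routine) care are the reduction to the coprime case $\gcd(p,w)=1$ and the observation that "$p \leq 4$" is genuinely incompatible with "$p$ prime and $p>3$", since no prime equals $4$. I would also note in passing that the auxiliary conclusion of \Cref{thm105_81} — the existence of an $ICW_{2}(w,p^{2e})$ — plays no role here; only the numerical bound $p \leq 4a$ is used, and it is the improvement from "$p \geq 5$ forbidden under extra hypotheses" type statements such as \cite[Thm.\ 3.1]{ArasuMa2001b} to the clean "$p>3$" that this corollary records.
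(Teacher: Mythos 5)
Your proof is correct and follows exactly the route the paper intends: the corollary is stated as an immediate consequence of \Cref{thm105_81}, applied with $a=1$ so that the conclusion $p\le 4a=4$ contradicts $p>3$ being prime. Your preliminary reduction to the case $\gcd(p,w)=1$ (absorbing the $p$-part of $w$ into $p^b$, with the hypotheses transferring since $\gcd(p-1,w)=\gcd(p-1,w')$) is a harmless piece of bookkeeping that the paper leaves implicit.
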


The following corollary shows the non-existence of $CW(105,81)$, which was previously an open case in Strassler's table. 

\begin{cor}\label{cor105_81}
There is no $CW(3^a5^b7^c, 3^{2e})$ for non-negative integers $a$, $b$, $c$ and $e > 1$. 
\end{cor}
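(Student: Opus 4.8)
Here is the plan.

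The strategy is to induct on $v=3^a5^b7^c$, first reducing to proper weighing matrices and then splitting according to whether $3\mid v$; the case $3\mid v$ is pushed through \Cref{thm105_81}, after which everything reduces to ruling out an $ICW_2(5^b7^c,81)$, which is handled by a multiplier argument. Concretely, suppose some $CW(3^a5^b7^c,3^{2e})$ with $e>1$ exists and take a counterexample $D\in\Z[C_v]$ with $v$ minimal. If $D$ is improper, a translate $Dg$ lies in $\Z[H]$ for a proper subgroup $H\le C_v$; since the coefficients of $Dg$ are those of $D$ and $(Dg)(Dg)^{(-1)}=DD^{(-1)}=3^{2e}$, the element $Dg$ read inside $\Z[H]$ is a $CW(|H|,3^{2e})$ with $|H|<v$ of the form $3^{a'}5^{b'}7^{c'}$, contradicting minimality. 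So we may assume $D$ is proper, and also $v>1$ (there is no $CW(1,3^{2e})$ for $e>1$).

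If $3\nmid v$, then $v=5^b7^c>1$ is coprime to $n=3^{2e}$; a direct computation gives $F(5^b7^c,3^{2e})=5^{\min(b,1)}7^{\min(c,1)}\in\{5,7,35\}$ (in particular independent of $e$), so \Cref{Fbound_IW} forces $3^{2e}\le F^2/\varphi(F)\le 35^2/\varphi(35)<52$, contradicting $3^{2e}\ge 81$. If $3\mid v$, put $w=5^b7^c$; since $w$ is odd, $\gcd(p-1,w)=\gcd(2,w)=1$ for $p=3$, so \Cref{thm105_81} applies with $p=3$ and coefficient bound $1$ and yields an $ICW_2(5^b7^c,3^{2e})$. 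If $b=c=0$ this is an absurd $ICW_2(1,3^{2e})$; otherwise \Cref{Fbound_IW} gives $3^{2e}\le 4\cdot 35^2/\varphi(35)<205$, hence $e=2$, and if $b=0$ or $c=0$ then $F\le 7$ and the bound drops below $81$, so in fact $b,c\ge 1$. Thus everything reduces to showing there is no $ICW_2(5^b7^c,81)$ with $b,c\ge 1$.

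To rule this out, let $D$ be such a matrix. As $\gcd(5^b7^c,81)=1$, \Cref{McFarland} applies with $t=3$: $3$ is a multiplier and some translate $E=Dg$ — still an $ICW_2(5^b7^c,81)$ — satisfies $E^{(3)}=E$. Project onto the Sylow $5$-subgroup: $Y:=\pi_{C_{5^b}}(E)\in\Z[C_{5^b}]$ satisfies $YY^{(-1)}=81$. For a character $\psi$ of $C_{5^b}$ of order $5^i$ we have $\psi(Y)\overline{\psi(Y)}=81\equiv 0\bmod 3^4$, and since $3$ is self-conjugate modulo $5^i$ ($3$ has even multiplicative order modulo $5^i$), \Cref{turyn} gives $\psi(Y)\equiv 0\bmod 9$. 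The inversion formula (\Cref{inversion}) together with $\gcd(9,5^b)=1$ then forces $9$ to divide every coefficient of $Y$; writing $Y=9Z$ gives $ZZ^{(-1)}=1$, so $Y=\pm 9g_0$ for some $g_0\in C_{5^b}$ by \Cref{lem: fixes1}. Applying $(3)$ and using $E^{(3)}=E$ forces $g_0^3=g_0$, i.e.\ $g_0=1$; hence $\pi_{C_{5^b}}(E)=\pm 9\cdot 1$.

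Writing $E=\sum_{k,l}a_{kl}x^ky^l$ for generators $x,y$ of $C_{5^b},C_{7^c}$, the last equality says $\sum_l a_{0l}=\pm 9$. Then $R:=\sum_l a_{0l}y^l\in\Z[C_{7^c}]$ has augmentation $\pm 9$, while $E^{(3)}=E$ forces $R$ to be constant on the orbits of $l\mapsto 3l$ on $\Z/7^c\Z$. Every such orbit other than $\{0\}$ has size divisible by $\ord_7(3)=6$, so $\pm 9\equiv a_{00}\pmod 6$; but $a_{00}\in\{0,\pm 1,\pm 2\}$ is never congruent to $3$ modulo $6$, a contradiction, and this finishes the proof. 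I expect the main obstacle to be the argument of the previous paragraph — collapsing $\pi_{C_{5^b}}(E)$ to a single group element, which requires checking that the multiplier descends along the projection and that the self-conjugacy hypothesis of \Cref{turyn} holds on every cyclotomic layer $\Z[\zeta_{5^i}]$ — together with the bookkeeping that converts multiplier-invariance of $R$ into the closing congruence; the reductions via \Cref{Fbound_IW} and \Cref{thm105_81} are routine once the relevant $F$-values have been computed.
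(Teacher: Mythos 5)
Your proof is correct, and while it follows the paper's skeleton for the reduction (use Theorem \ref{thm105_81} to pass from a proper $CW(3^a5^b7^c,3^{2e})$ to an $ICW_2(5^b7^c,3^{2e})$, invoke the F-bound of Result \ref{Fbound} to force $n=81$ and $b,c\ge 1$, then use Result \ref{McFarland} to normalize to $E^{(3)}=E$), your endgame is genuinely different from the paper's. The paper bounds the support of $E$ by the sizes of the orbits of $g\mapsto g^3$, which forces $b\le 2$ and $c\le 1$, and then disposes of $v=175$ by a projection-to-$C_{35}$ counting argument and of $v=35$ by an explicit Diophantine system in the orbit coefficients. You instead project $E$ onto the Sylow $5$-subgroup, use the self-conjugacy of $3$ modulo $5^i$ together with Result \ref{turyn}, Result \ref{inversion} and Lemma \ref{lem: fixes1} to collapse that projection to $\pm 9$, and then close with a single congruence modulo $6$ on the identity-fiber coefficients (every nontrivial orbit of $l\mapsto 3l$ on $\Z/7^c\Z$ has size divisible by $\ord_7(3)=6$, while $\pm 9\equiv 3\pmod 6$ is unreachable from a coefficient in $\{0,\pm 1,\pm 2\}$). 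Your version buys uniformity — it kills all $b,c\ge 1$ at once with no case analysis and no computation — at the cost of one extra structural step (the Turyn/Kronecker collapse of the projection). You are also more careful than the paper about two points it leaves implicit: the reduction to proper matrices via a minimal counterexample, and the case $3\nmid v$ (where Theorem \ref{thm105_81} does not apply and the F-bound must be used directly); both of these are handled correctly in your write-up.
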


\begin{proof}
	Let $v = 5^b7^c$ and $n=3^{2e}$. 
	By Theorem \ref{thm105_81}, it is sufficient to show that $ICW_{2}(v,n)$ does not exist.	
	Suppose a proper $ICW_2(v,n)$ exists, denoted by $E$. 
	By the F-bound (Result \ref{Fbound}), we deduce that $v$ is a multiple of $35$ and $n=81$. 
	By McFarland's result, we have that $3$ is a multiplier and we may assume that 
	\begin{equation} \label{eq:35_1}
		E^{(3)} = E.
	\end{equation}
	Consider the set of orbits under the map $g \mapsto g^3$ on $C_{v}$. 
	Equation (\ref{eq:35_1}) implies that elements in the same orbit share a common coefficient in $E$. 
	There is one orbit of size $1$, and for each pair of $1 \leq i \leq b$, $1 \leq j \leq c$, there is one orbit of size $\ord_{5^i}(3)=4.5^{i-1}$, one orbit of size $\ord_{7^j}(3) = 6.7^{j-1}$ and two orbits of size $\ord_{5^i7^j}(3) = 12.5^{i-1}7^{j-1}$.
	Since the support of $E$ does not exceed $81$, elements in orbits of size greater than $81$ should have coefficients $0$ in $E$.
	Since $E$ is proper by assumption, we conclude that $b \leq 2$ and $c \leq 1$. 
	If $v=5^27$, then the support of $E$ must contain exactly one orbit of elements of order $175$. The size of this orbit is $60$.
	By considering the natural projection $\rho:C_{175}\rightarrow C_{35}$, we see that the support of $\rho(E)$ will contain $12$ elements, each with coefficient $5$.  
	Then the sum of the squares of the coefficients in $\rho(E)$ will exceed $81$. This gives a contradiction because the image $\rho(E)$ is also an integer weighing matrix with weight $81$.   
	So, $v=35$. We sort the orbits in non-decreasing order of the orbit size and let $c_i$ denote the coefficient of $g_i$ in $E$, where $g_i$ is an element in the $i$-th orbit.
	By replacing $E$ with $-E$ if necessary, we may assume that the sum of the coefficients in $E$ is $9$. 
	We derive the following equations using the relations between the weight and the coefficients of $E$:  
	\begin{equation}\label{eq:35_2}
	9 = c_1 + 4c_2 + 6c_3 + 12c_4 + 12 c_5,
	\end{equation}
	\begin{equation}\label{eq:35_3}
	81 = c_1^2 + 4c_2^2 + 6c_3^2 + 12c_4^2 + 12 c_5^2.
	\end{equation}
	It can be verified that there is no solution to (\ref{eq:35_2}) and (\ref{eq:35_3}) with $|c_i| \leq 2$ for all $i$. This gives a contradiction.
\end{proof}

\subsection{Generalized Multiplier Method}
In this section, we present a collection of non-existence results derived using the generalized multiplier theorems presented in \Cref{sec:Multiplier}. These non-existence results correspond to some previously open cases in Strassler's table and in \cite{arasu2013group}.

\begin{thm} \label{thm158_100}
Let $a$, $d$, $k$, $u$ be positive integers and let $p$ be an odd prime.
Let $G$ be an abelian group of exponent $up^d$.
If the following conditions hold, then there is no $G$-invariant $IW_a(|G|, k^2)$. 
\begin{enumerate}[{\normalfont (a)}]
\item $u$, $p$ and $k$ are pairwise coprime. 
\item $k>a$ and $k$ is self-conjugate modulo $u$.
\item There is an integer $t$ coprime to $up^d$ such that $\sigma_t$ fixes all prime ideal divisors of $k$ in $\Z[\zeta_{up^d}]$ and satisfying ${\rm ord}_p(t)>k+a$. 
\end{enumerate}
\end{thm}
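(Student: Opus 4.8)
The plan is to argue by contradiction: suppose a $G$-invariant $IW_a(|G|,k^2)$, say $D$, exists. First I would reduce to the situation where the generalized multiplier machinery applies. Writing $G$ with exponent $up^d$, condition (c) supplies an integer $t$ such that $\sigma_t$ fixes every prime ideal divisor of $k$ in $\Z[\zeta_{up^d}]$. Since $DD^{(-1)}=k^2$ (equivalently $\chi(D)\overline{\chi(D)}=k^2$ for all $\chi$ by Result \ref{character_GW}), and $\gcd(k^2,|G|)=1$ by condition (a), Theorem \ref{thm:fixesn} applies with $n=k^2$: thus $t$ is a multiplier of $D$, so $D^{(t)}=\pm\zeta_u^e g D$ for some $g\in G$ and integer $e$. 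Here I should be slightly careful: Theorem \ref{thm:fixesn} as stated is for $X\in\Z[\zeta_u][G]$ with the relevant $v=\lcm(u,\exp G)$, and here $D\in\Z[G]$ (i.e. $u=1$ in that theorem's notation), with $t$ chosen to fix prime ideals in $\Z[\zeta_{up^d}]=\Z[\zeta_{\exp G}]$, which is exactly $\Z[\zeta_v]$; so the hypothesis is met.

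Next I would normalize the multiplier equation. The point of having $t\equiv 1\bmod{u}$ — which I would need to extract, or arrange, from condition (c) (note $\ord_{u p^d}(t)$ being controlled and $t$ fixing prime ideals over $k$ in $\Z[\zeta_u]$ together with $k$ self-conjugate mod $u$ should force $\sigma_t$ to act trivially on the relevant part; more simply, one can replace $t$ by a suitable power $t^j$ with $t^j\equiv 1\bmod u$ while keeping $\ord_p(t^j)$ large, since $\gcd(u,p)=1$) — is to invoke Corollary \ref{cor:fixingmult}. That corollary (with the roles: $u$ as here, $V$ the Sylow $p$-part of $G$ of exponent $p^d$, and $k$ coprime to $p$) lets us replace $D$ by a translate so that $D^{(t)}=D$ holds outright, at least on the $p$-part. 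More precisely I would work in $G=U\times V$ with $|U|$ coprime to $p$ and $V$ the $p$-Sylow subgroup, push everything through a character that is faithful on $V$, and get a cyclotomic-integer coefficient element fixed by $\sigma_t$.

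The heart of the argument — and the step I expect to be the main obstacle — is then a counting/orbit argument à la the proof of Corollary \ref{cor:fixingmult}, combined with the absolute-value constraint. From $D^{(t)}=D$ we get that the coefficients of $D$ are constant on the orbits of $g\mapsto g^t$ acting on $G$. On the $p$-part $V$, the map $g\mapsto g^t$ has orbits whose sizes are controlled by $\ord_{p^i}(t)$; in particular, any element of $V$ of exponent $p^i$ with $i\ge 1$ lies in an orbit of size divisible by $\ord_p(t)>k+a$. Since $D$ has coefficients bounded by $a$ and weight $k^2$, its support has size at most $k^2$; an orbit of size exceeding $k+a$ forces... here I need the finer bound: passing to a suitable projection $\rho$ onto a quotient by part of $V$ (as in Corollary \ref{cor105_81}'s proof, where a large orbit collapses to few elements each with large coefficient), the element $\rho(D)$ is an integer weighing matrix of weight $k^2$ whose coefficients along the images of a large orbit are forced to have absolute value at least something like (orbit size)$/$(projected support), contradicting $|{\rm coeff}|\le$ (the relevant bound, ultimately $\le a$ or $\le 2a$), OR directly: the trivial character of $V$ applied to $D$ must have absolute value $k$, but a geometric-series cancellation $1+z+\cdots+z^{s-1}=0$ over long orbits kills most of the sum, leaving $|\rho(D)|\le a\cdot(\text{number of short orbits})$, and the condition $\ord_p(t)>k+a$ is exactly what is needed to make even a single contributing orbit too large. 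I would need to assemble these pieces so that "$\ord_p(t)>k+a$" yields the numerical contradiction $k < $ (something $\le k$); the self-conjugacy hypothesis (b) and $k>a$ feed into controlling the $U$-part and ensuring the projected matrix genuinely has weight $k^2$ with small coefficients. The delicate bookkeeping of orbit sizes on a general abelian $p$-group (as opposed to cyclic) versus the support bound $k^2$, and making the inequality come out with the stated slack $k+a$, is where the real work lies; everything else is assembling Results \ref{character_GW}, \ref{selfconjugate_solution2}, Theorem \ref{thm:fixesn}, and Corollary \ref{cor:fixingmult}.
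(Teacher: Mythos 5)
Your skeleton is right --- use condition (c) to get $t$ as a multiplier of the putative matrix $E$ (Theorem \ref{thm:fixesn} with $u=1$ works, since $\gcd(k,|G|)=1$; the paper invokes Result \ref{McFarland} in the same role), and then play the fact that every non-trivial orbit of $g\mapsto g^t$ on the Sylow $p$-subgroup $V$ has size divisible by $w=\ord_p(t)$ against the hypothesis $w>k+a$. But the decisive step is missing. In the paper's proof the contradiction comes from pinning down the projection $\rho:G\to U$, where $G=U\times V$: conditions (a) and (b) (self-conjugacy of $k$ modulo $u$), via Result \ref{turyn}, Result \ref{kronecker} and the inversion formula (Result \ref{inversion}), force $\rho(E)=k$ after a translation, so the fibre over $1_U$ gives the exact identity $\sum_{g\in V}a_g=k$. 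Since the coefficients are constant on $t$-orbits and each non-trivial orbit in $V$ has size divisible by $w$, this reads $k=a_1+wM$ with $|a_1|\le a$; as $k>a$ we get $M\neq 0$, hence $w\le |k-a_1|\le k+a$, contradicting (c). You never establish $\sum_{g\in V}a_g=k$ (you only gesture at ``controlling the $U$-part''), and you explicitly defer the numerical assembly as ``where the real work lies''; neither substitute you float supplies it: the geometric-series cancellation $1+z+\cdots+z^{s-1}=0$ belongs to the proof of Corollary \ref{cor:fixingmult}, where $z$ is a root of unity, whereas here the orbit sums are $c_is_i$ with integer $c_i$ and nothing cancels, and a Corollary \ref{cor105_81}-style ``large coefficient after projection'' argument is not how the bound $w>k+a$ enters.

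Your normalization detour is also harmful rather than helpful. You propose to force $t\equiv 1\bmod u$ by replacing $t$ with a power $t^j$, but $\ord_p(t^j)=\ord_p(t)/\gcd(j,\ord_p(t))$ can drop below $k+a$, destroying exactly the hypothesis the proof lives on; moreover Corollary \ref{cor:fixingmult} is stated for an abelian $p$-group $V$, not for $G$, and the map into $\Z[\zeta_u][V]$ (or through a character faithful on a factor) is the machinery of the later Theorem \ref{thm138_36}, where $t\equiv 1\bmod u$ is an explicit hypothesis. For the present theorem none of this is needed: the whole argument stays in $\Z[G]$, and the translated relation $E^{(t)}=E$ together with $\rho(E)=k$ is all that is used.
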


\begin{proof}
	Write $G = U \times V$ where $V$ is the Sylow $p$-subgroup of $G$.
	Assume there exists a $G$-invariant $IW_a(|G|,k^2)$, denoted by $E$.
	Let $\rho$ be the natural projection of $G$ to $U$. 
	Then $\rho(E)$ is also an integer weighing matrix of weight $k^2$. 
	
	\medskip
	
	By conditions (a) and (b), we can use Result \ref{inversion} and Result \ref{turyn} to show that $\rho(E) = k$, up to equivalence. 
	By condition (c), we can use Result \ref{McFarland} to show that $t$ is a multiplier of $E$.  
	Hence, we may replace $E$ by a translate and assume that 
	\begin{equation} \label{eq:1}
	\rho(E)=k,
	\end{equation} and 
	\begin{equation}\label{eq:2}
	E^{(t)}=E.
	\end{equation}
	For each $g \in G$, let $a_g$ denote the coefficient of $g$ in $E$.
	Then (\ref{eq:1}) implies that 
	\begin{equation} \label{eq:3}
		\sum_{g \in V}a_g = k,
	\end{equation}
	and (\ref{eq:2}) implies that $$a_g=a_{g^t}$$ for all $g \in V$.   
	Consider the sequence of orbits under the map $g \mapsto g^t$ on $V$, sorted in non-decreasing order of the orbit size. 
	Let $s_i$ denote the size of the $i$-th orbit, and $c_i$ denote the coefficient of $g_i$ in $E$, where $g_i$ is an element in the $i$-th orbit. 
	Then (\ref{eq:3}) becomes $$\sum c_i s_i =k.$$
	Note that if $s_i>1$, then $s_i$ is a multiple of $w:={\rm ord}_p(t)$. 
	Hence, there is an integer $M$ such that $$c_1 + wM =k.$$ 
	Note that $|c_1| \leq a$ by definition and $k>a$ by assumption. 
	We deduce $M$ is non-zero and thus $w$ divides $k - c_1$. 
	That is $$w \leq |k - c_1| \leq k + a.$$
	This contradicts the original assumption where $w > k + a$.
	Hence, no $G$- invariant $IW_a(|G|, k^2)$ exists. 	
\end{proof}

The following result is a direct consequence of Theorem \ref{thm158_100}. We use this result to rule out some previously open cases.  

\begin{cor}\label{cor158_100}
Let $a$, $d$, $m$, $n$, $u$ be positive integers, and let $p$, $q$, $r$ be distinct primes where $p$ is odd. 
Let $$w = \gcd\left( \frac{\ord_p(q)}{\gcd(\ord_u(q),\ord_p(q))}, \frac{\ord_p(r)}{\gcd(\ord_u(r),\ord_p(r))}\right).$$
Let $H$ be an abelian group of exponent $up^d$.
Let $G$ be an abelian group such that $G / N$ is isomorphic to $H$, where $N$ is a subgroup of $G$ of order $a$.
If the following conditions hold, then there is no $G$-invariant $IW(a|H|, k^2)$, where $k = r^mq^n.$
\begin{enumerate}[{\normalfont (a)}]
\item $u$, $p$, $q$ and $r$ are pairwise coprime.
\item $k$ is self-conjugate modulo $u$.
\item $a < k < w - a$.
\end{enumerate}

\end{cor}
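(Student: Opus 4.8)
The plan is to reduce the statement to Theorem \ref{thm158_100} by producing, from the two primes $q$ and $r$, a single integer $t$ that simultaneously fixes all prime ideals above $k=r^mq^n$ in $\Z[\zeta_{up^d}]$ and has large multiplicative order modulo $p$. First I would note that whether $\sigma_t$ fixes the prime ideals above a prime $\ell$ in $\Z[\zeta_{up^d}]$ depends only on $t$ modulo $up^d$, and in fact only on the residue class of $t$ in the decomposition group; the key point is that $\sigma_q$ (the Frobenius at $q$, i.e.\ $\zeta \mapsto \zeta^q$) fixes every prime ideal above $q$, and likewise $\sigma_r$ fixes every prime above $r$. So powers of $q$ fix the primes above $q$, powers of $r$ fix the primes above $r$, and I want a $t$ that is simultaneously a power of $q$ and a power of $r$ modulo $up^d$ — equivalently, modulo $u$ and modulo $p^d$ separately by CRT, since $\gcd(u,p)=1$.

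Next I would analyze the cyclic group $\langle q\rangle$ and $\langle r\rangle$ inside $(\Z/p^d\Z)^\times$ and inside $(\Z/u\Z)^\times$. The definition of $w$ is engineered precisely so that there is a common value: the quotient $\ord_p(q)/\gcd(\ord_u(q),\ord_p(q))$ is the order of the image of $q$ in $(\Z/p\Z)^\times$ after killing the part of its order that is ``visible mod $u$''. Concretely, I would pick $t$ of the form $t \equiv q^{\alpha} \bmod{u}$, $t\equiv q^{\beta}\bmod p^d$ arranged so that $t$ is also a power of $r$ modulo $u$ and modulo $p^d$; the standard way is to choose the exponents so that $t\equiv 1 \bmod u$ (a power of $q$ and of $r$ that is trivial mod $u$ always exists, namely $q^{\ord_u(q)\,\ord_u(r)}$, and similarly any common multiple works) while $t$ still has order divisible by $w$ modulo $p$. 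Then automatically $\sigma_t$ fixes all primes above $u$-part trivially and fixes the primes above $q$ and above $r$ because it is a power of $q$ and of $r$ in the relevant local pieces. The order condition $\ord_p(t) > k+a$ then follows from $\ord_p(t)\ge w > k+a$, which is condition (c) rewritten (note $a<k<w-a$ gives $w>k+a$).

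Finally, I would handle the group-theoretic wrapper: $G$ has a subgroup $N$ of order $a$ with $G/N\cong H$ of exponent $up^d$. Passing to the natural projection $G \to G/N \cong H$ sends a putative $G$-invariant $IW(a|H|,k^2)$ to an $H$-invariant integer weighing matrix of weight $k^2$ whose coefficients are bounded in absolute value by $|N| = a$ (each fiber has $a$ elements, each contributing a $0$ or $\pm1$). Thus it is an $H$-invariant $IW_a(|H|,k^2)$. Now all the hypotheses of Theorem \ref{thm158_100} are in place: $u,p,k$ are pairwise coprime by (a); $k$ is self-conjugate mod $u$ by (b) and $k>a$ by (c); and the integer $t$ constructed above satisfies condition (c) of that theorem since $\ord_p(t)>k+a$. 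Theorem \ref{thm158_100} then yields a contradiction, so no $G$-invariant $IW(a|H|,k^2)$ exists.

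The main obstacle I anticipate is the number-theoretic bookkeeping in the second step: verifying that the definition of $w$ really does guarantee an element $t$ that is simultaneously a power of $q$ mod $up^d$, a power of $r$ mod $up^d$, and has order exactly (or at least) $w$ modulo $p$. This requires carefully tracking the orders of $q$ and $r$ in $(\Z/u\Z)^\times$ versus $(\Z/p^d\Z)^\times$ and invoking that $(\Z/p^d\Z)^\times$ is cyclic (so the subgroup generated by $q$ and the subgroup generated by $r$ intersect in the unique subgroup of each admissible order), together with the fact that self-conjugacy / the prime-ideal-fixing condition only sees $t$ through its residues. Once that lemma is pinned down, the rest is a routine application of Theorem \ref{thm158_100}.
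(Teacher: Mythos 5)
Your proposal is correct and takes essentially the same route as the paper, whose proof is just the one-liner ``let $t \equiv 1 \bmod u$ with $\ord_p(t) = w$ and apply Theorem \ref{thm158_100}''; the details you add (the projection modulo $N$ turning the putative matrix into an $H$-invariant $IW_a(|H|,k^2)$, and $w > k+a$ coming from $a < k < w-a$) are precisely what the paper leaves implicit. One caution: your phrase ``equivalently, modulo $u$ and modulo $p^d$ separately by CRT'' is imprecise, since fixing all primes above $q$ in $\Z[\zeta_{up^d}]$ requires $t$ to be a single power of $q$ modulo $up^d$ (the decomposition group is $\langle \sigma_q \rangle$), not separate powers in the two factors — but your closing formulation of the needed lemma, namely a $t \in \langle q\rangle \cap \langle r\rangle$ modulo $up^d$ with $t \equiv 1 \bmod u$ and $\ord_p(t) = w$, is the correct one and does follow from the cyclicity argument you sketch (the powers of $q$ that are $1$ mod $u$ reduce mod $p$ to the subgroup of order $\ord_p(q)/\gcd(\ord_u(q),\ord_p(q))$, and likewise for $r$).
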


\begin{proof}
	Let $t$ to be an integer such that $t \equiv 1 \bmod{u}$ and $\ord_p(t) = w$. Then the assertion follows directly from Theorem \ref{thm158_100}. 
\end{proof}

\begin{cor} 
Let $k$ be a product of powers of $2$ and $5$ and $c \geq 0$.
Then there is no $CW(2^c79^d, k^2)$ for any $d \geq 1$, $k > 25$ and $2^c < k < 39-2^c$.
\end{cor}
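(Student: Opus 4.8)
The plan is to deduce this corollary directly from Corollary \ref{cor158_100} by specializing the parameters. Here the prime $79$ plays the role of the odd prime $p$, and $2$ plays the role of one of the auxiliary primes; since $k$ is a product of powers of $2$ and $5$, I will need to handle the cases according to which of $2$ and $5$ actually divide $k$. Write $k = 2^m 5^n$ with $m, n \geq 0$. The group is $C_{2^c 79^d}$, which is cyclic of exponent $2^c 79^d$; I take $u = 2^c$, $p = 79$, and $d$ as the exponent of $79$. To apply Corollary \ref{cor158_100} with $H = C_{2^c 79^d}$ and $N$ trivial (so $a = 1$ in the notation of that corollary — note the $CW$ here has no integer inflation, matching $a=1$), I must verify the three hypotheses: $u = 2^c$, $p = 79$, and the primes dividing $k$ (i.e. $2$ and/or $5$) are pairwise coprime; $k$ is self-conjugate modulo $2^c$; and $1 < k < w - 1$ where $w$ is the gcd expression built from $\ord_{79}(2)$, $\ord_{79}(5)$ and their interactions with $\ord_{2^c}(\cdot)$.

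First I would compute the relevant multiplicative orders modulo $79$. One finds $\ord_{79}(2) = 39$ and $\ord_{79}(5) = 39$ (these are the computations I would actually carry out: $79$ is prime, $79 - 1 = 78 = 2 \cdot 3 \cdot 13$, and one checks $2^{39} \equiv 1$, $2^{26} \not\equiv 1$, $2^6 \not\equiv 1$, and similarly for $5$). Next, since $\ord_{2^c}(q) \mid 2^{c-2}$ is a power of $2$ for odd $q$, while $\ord_{79}(q) = 39$ is odd, the gcd $\gcd(\ord_{2^c}(q), \ord_{79}(q)) = 1$ for $q \in \{2, 5\}$ — wait, $2$ is not odd, so for $q = 2$ one uses instead that only $5$ (and, trivially when $m = 0$ or $n = 0$, just one prime) contributes, but in any case $\ord_{2^c}(5)$ is a power of $2$ and $39$ is odd, so the gcd is $1$. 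Hence $w = \gcd(39/1, 39/1) = 39$ whenever both $2$ and $5$ divide $k$; if only one does, $w = 39$ still results from the single relevant term. So condition (c) of Corollary \ref{cor158_100} reads $1 < k < 39 - 1 = 38$, which matches the hypothesis $2^c < k < 39 - 2^c$ in the statement (the $2^c$ appears because in the $CW$ setting without a nontrivial $N$, the role of $a$ is played by $2^c$ through the reduction — I should double-check against the exact formulation, but this is the intended reading, and the hypotheses $2^c < k$ and $k < 39 - 2^c$ are precisely conditions (c)).

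For condition (b), self-conjugacy of $k = 2^m 5^n$ modulo $u = 2^c$: the $2$-part is irrelevant since $u$ is a power of $2$ and $u(2) = 1$, so $2 \equiv -1 \bmod 1$ trivially; for the prime $5$, I need $5^j \equiv -1 \bmod 2^{c'}$ for the relevant power $2^{c'} = u(5) = 2^c$, but modulo powers of $2$ this requires checking — actually $5$ generates a subgroup of $(\Z/2^c\Z)^*$ not containing $-1$ for $c \geq 3$, so $5$ is \emph{not} self-conjugate modulo $2^c$ in general. This is the main obstacle, and I expect the resolution is that the hypothesis should be read with $u$ taken to be just the part of $2^c$ that makes things work, or that the constraint $2^c < k$ together with $k > 25$ forces $2^c$ small enough; indeed $2^c < k < 38$ and in the regime of interest $c$ is small (e.g. $c \le 5$), and for small $c$ one verifies self-conjugacy of $5$ modulo $2^c$ directly: $5 \equiv -3 \bmod 8$, $5^2 = 25 \equiv 1 \bmod 8$, so $5$ is self-conjugate mod $8$ iff $-1$ is a power of $5$ mod $8$, which fails. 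So more likely the intended argument takes $u = 1$ when $k$ is a pure power of $2$, and when $5 \mid k$ one instead invokes Result \ref{selfconjugate_solution2} style reasoning or notes $\gcd(p-1, w)$-type conditions; alternatively $u$ in Corollary \ref{cor158_100} can be taken as $1$ and the $2$-part of the group exponent absorbed differently. I would resolve this by choosing $u = 1$ (so the group exponent is $79^d$ at the character level that matters, i.e. applying the theorem to the quotient by $C_{2^c}$) — but then $2$ is one of the primes $q$ or $r$, coprime to $p = 79$, and condition (b) on self-conjugacy modulo $u = 1$ is vacuous. With $u = 1$, $w = \gcd(\ord_{79}(2), \ord_{79}(5)) = \gcd(39, 39) = 39$, condition (a) asks $1, 79, 2, 5$ pairwise coprime (true), condition (c) asks $a < k < w - a$ with $a = 2^c$, i.e. $2^c < k < 39 - 2^c$, exactly as stated. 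That is the clean route, and the only real content beyond bookkeeping is the order computations $\ord_{79}(2) = \ord_{79}(5) = 39$.

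Thus the proof reads: set $u = 1$, $p = 79$, $q = 2$, $r = 5$ (discarding whichever of $q, r$ does not divide $k$, or noting the formula for $w$ still yields $39$), $H = C_{79^d}$, $G = C_{2^c 79^d}$, $N = C_{2^c}$ of order $a = 2^c$; compute $\ord_{79}(2) = \ord_{79}(5) = 39$ so that $w = 39$; check the three hypotheses of Corollary \ref{cor158_100}, with (c) being exactly $2^c < k < 39 - 2^c$; conclude no such $CW$ exists. The hypothesis $k > 25$ in the statement is presumably there to exclude degenerate small weights or to ensure the parameter ranges are nonempty, and I would simply note it is compatible with $k < 39 - 2^c$.
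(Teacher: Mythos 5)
Your final instantiation ($u=1$, $a=2^c$, $p=79$, $q=2$, $r=5$, $N=C_{2^c}$, $H=C_{79^d}$, together with the computation $\ord_{79}(2)=\ord_{79}(5)=39$ so that $w=39$ and condition (c) becomes exactly $2^c<k<39-2^c$) is precisely how the paper obtains this corollary as a direct consequence of \Cref{cor158_100}, so your proof matches the intended argument; the detour via $u=2^c$ is correctly abandoned. The only point worth tightening is the requirement $m,n\ge 1$ in \Cref{cor158_100}: when $k$ is a pure power of $2$ or of $5$ (in fact $k=32$ is the only value allowed by $25<k<39-2^c$), one should apply \Cref{thm158_100} directly with $t$ a power of the single prime dividing $k$, as you briefly indicate.
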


\begin{cor} \label{cor:abelian1}
Let $d \geq 1$, $p$, $k$, $c$ and $e$ be as listed in the following.
Let $G$ be an abelian group of order $2^{c+e}p^d$ and exponent $2^cp^d$. 
Then there is no $G$-invariant $IW(|G|, k^2)$.
\begin{enumerate}
\item $p = 11$, $k = 3$, $0 \leq c \leq 2$ and $e=0$. 
\item $p = 19$, $k = 5$, $0 \leq c \leq 1$ and $e=1$. 
\item $p = 23$, $k = 3$, $0 \leq c \leq 2$ and $e = 2$, or $k=9$, $0 \leq c \leq 2$ and $e=0$, or $k = 6$, $c=0$ and $e=1$. 
\end{enumerate} 
\end{cor}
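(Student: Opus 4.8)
The plan is to obtain every listed case from Theorem~\ref{thm158_100} or Corollary~\ref{cor158_100}, once the right parameters are identified. The families with prime-power weight (items (1) and (2), and the subcases $k\in\{3,9\}$ of item (3)) I would treat by applying Theorem~\ref{thm158_100} \emph{directly to $G$} with $a=1$, $u=2^c$, and $p$ as listed: then $G$ has exponent $2^cp^d=up^d$, and since $k$ is a power of $3$ or $5$ we have $\gcd(|G|,k)=1$, so the set-up of the theorem is in force. Condition~(a) is immediate. For~(c), take $t$ to be the prime dividing $k$, i.e.\ $t=3$ when $k\in\{3,9\}$ and $t=5$ when $k=5$; then $\sigma_t$ is the Frobenius at $t$ in $\Gal(\Q(\zeta_{up^d})/\Q)$, so it stabilizes every prime ideal above $t$, which is the unique prime divisor of $k$, and the requirement $\ord_p(t)>k+1$ reduces to the computations $\ord_{11}(3)=5>4$, $\ord_{19}(5)=9>6$, and $\ord_{23}(3)=11$, the last handling both $k=3$ (as $11>4$) and $k=9$ (as $11>10$, which is tight). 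Condition~(b), that $k$ be self-conjugate modulo $2^c$, is exactly what selects the stated ranges of $c$: $3^j\equiv-1\pmod{2^c}$ is solvable precisely for $c\le2$ (e.g.\ $3\equiv-1\pmod4$), while $5^j\equiv-1\pmod{2^c}$ is solvable precisely for $c\le1$. Theorem~\ref{thm158_100} then yields the non-existence of a $G$-invariant $IW(|G|,k^2)$. For the pairs $(c,e)$ with $c=0$ and $e\ge1$ there is in fact no abelian group of order $2^{c+e}p^d$ and exponent $2^cp^d$ at all, so the claim is vacuous there.

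For the weight-$6$ subcase of item~(3) one cannot apply Theorem~\ref{thm158_100} to $G$ directly, because $\gcd(|G|,6)\ne1$ would break the coprimality hypothesis of McFarland's theorem that is used inside its proof. Instead I would invoke Corollary~\ref{cor158_100} with $u=1$ (forced by coprimality, as $c=0$), $p=23$, $\{q,r\}=\{2,3\}$, $m=n=1$, $a=2$, and $H=C_{23^d}$, which has exponent $up^d=23^d$ and order $23^d$, so that $a\,|H|=2\cdot23^d=|G|$ and $G/N\cong H$ for a subgroup $N\le G$ of order $2$. Its hypotheses are routine: $1,23,2,3$ are pairwise coprime; self-conjugacy modulo $1$ is automatic; and $a<k<w-a$ reads $2<6<w-2$, where $w=\gcd\bigl(\ord_{23}(2),\ord_{23}(3)\bigr)=\gcd(11,11)=11>8$. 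Hence no $G$-invariant $IW(2\cdot23^d,36)$ exists, which by projecting along $G\to G/N$ is exactly the assertion (and in particular rules out $CW(2\cdot23^d,36)$).

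I do not expect a genuine obstacle here: the argument is a collection of short verifications. The two points that need a little care are that one must descend to the $23$-group quotient via the projection in the $k=6$ case, so that McFarland's coprimality condition is available, and that the self-conjugacy check modulo $2^c$, elementary though it is, is precisely what forces the admissible ranges $c\le2$ and $c\le1$ appearing in the statement.
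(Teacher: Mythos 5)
Your proposal is correct and matches the paper's (implicit) proof: the corollary is stated there as a direct consequence of Theorem \ref{thm158_100}, and your parameter choices ($a=1$, $u=2^c$, $t$ the prime dividing $k$, with the order computations $\ord_{11}(3)=5$, $\ord_{19}(5)=9$, $\ord_{23}(3)=\ord_{23}(2)=11$ and the self-conjugacy checks modulo $2^c$) together with the reduction of the weight-$6$ subcase to Corollary \ref{cor158_100} via the order-$2$ quotient are exactly the intended verifications. Your observation that the $(c,e)=(0,e\ge 1)$ combinations are vacuous as literally stated is a fair remark on the statement itself and does not affect the correctness of the argument.
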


The following theorem extends \Cref{thm158_100} to the cases of $IW_a(v,n)$ where $v$ and $n$ are not coprime. 

\begin{thm} \label{thm138_36}
	Let $a$, $u$, $d$, $k$ be positive integers and $p$ be odd prime. 
	Let $G$ be an abelian group of order $up^{d'}$ and exponent $up^d$.
	If the following conditions hold, then there is no $G$-invariant $IW_a(up^{d'}, k^2)$. 
	\begin{enumerate}[{\normalfont (a)}]
		\item $\gcd(u,p)=\gcd(p,k)=1$.
		\item There exists an integer $t$ such that 
		\begin{enumerate}[{\normalfont (i)}]
			\item $\sigma_t$ fixes all prime ideal divisors of $k$ in $\Z[\zeta_{up^d}]$, 
			\item $t \equiv 1 \bmod{u}$,
			\item Let $w=\ord_p(t)$. Either $k$ is self-conjugate modulo $u$ and $2^{\delta(u)}a < k < w - 2^{\delta(u)}a$ or
			$ \frac{ua}{\sqrt{\varphi(u)}} < k < \frac{w}{2\sqrt{\varphi(u)}}.$
		\end{enumerate}
	\end{enumerate}
\end{thm}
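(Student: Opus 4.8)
\emph{The plan} is to use the generalized multiplier theorem to force $t$ to act trivially on the Sylow $p$-part of the matrix, collapse that part onto the identity, and then eliminate the remaining $C_u$-slice by Turyn's congruence or the F-bound. Write $|G|=up^{d'}$ with $\gcd(u,p)=1$ and $G=U\times V$, where $V$ is the Sylow $p$-subgroup (of exponent $p^d$) and $|U|=u$; since $\exp(G)=up^d$ forces $\exp(U)=u$, the group $U$ is cyclic of order $u$. Suppose for contradiction that a $G$-invariant $IW_a(up^{d'},k^2)$ exists and identify it with $E=\sum_{v\in V}E_v\,v\in\Z[C_u][V]$, where $E_v\in\Z[C_u]$ and all coefficients of $E$ have absolute value at most $a$. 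For each $u'\mid u$ let $\theta_{u'}\colon\Z[G]\to\Z[\zeta_{u'}][V]$ be the ring homomorphism that is the identity on $V$ and sends a fixed generator of $C_u$ to $\zeta_{u'}$; evaluating the characters of $V$, \Cref{character_GW} and \Cref{inversion} give $\theta_{u'}(E)\,\theta_{u'}(E)^{(-1)}=k^2$.

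\emph{Step 1 (a simultaneous multiplier).} By (a) we have $\gcd(k^2,|V|)=1$, and $\lcm(u,\exp V)=up^d$; since $\sigma_t$ fixes the prime ideals of $k$ — hence of $k^2$ — in $\Z[\zeta_{up^d}]$ by (b)(i), \Cref{thm:fixesn} shows $t$ is a multiplier of $\theta_u(E)$, say $\theta_u(E)^{(t)}=\pm\zeta_u^{e_0}g\,\theta_u(E)$ with $g\in V$. Applying the quotient $\Z[\zeta_u][V]\to\Z[\zeta_{u'}][V]$ yields the same relation with $\zeta_u$ replaced by $\zeta_{u'}$ and with the \emph{same} $g$ and sign. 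The hypotheses on $w=\ord_p(t)$ force $w>1$, so $\gcd(t-1,p^d)=1$ and there is a unique $h\in V$ with $h^{t-1}=g^{-1}$; after replacing $E$ by the translate $hE$ the group-element factor disappears and $\theta_{u'}(E)^{(t)}=\pm\zeta_{u'}^{e_0}\theta_{u'}(E)$ for all $u'\mid u$. The argument in the proof of \Cref{cor:fixingmult} (whose hypotheses hold for $X=\theta_u(E)$ since $\gcd(k,p)=1$ and $t\equiv1\bmod u$) forces the surviving factor $\pm\zeta_u^{e_0}$ to equal $1$; since then $u\mid e_0$ and the sign is positive, $\theta_{u'}(E)^{(t)}=\theta_{u'}(E)$ for \emph{every} $u'\mid u$. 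Because $t\equiv1\bmod u$ kills the Galois action on each $\Z[\zeta_{u'}]$, this says that $\theta_{u'}(E_v)$ is constant on the $\langle t\rangle$-orbits of $V$ for all $u'\mid u$; as $\Z[C_u]\hookrightarrow\prod_{u'\mid u}\Z[\zeta_{u'}]$, the slices $E_v$ themselves are constant on $\langle t\rangle$-orbits.

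\emph{Steps 2--3 (collapsing and eliminating the slice).} Since $p\nmid t-1$ and $V$ is a $p$-group, $1_V$ is the only $\langle t\rangle$-fixed element of $V$ and every other orbit has size divisible by $w$; hence $\rho_V(E):=\sum_{v\in V}E_v=E_{1_V}+wW$ for some $W\in\Z[C_u]$, where $\rho_V\colon\Z[G]\to\Z[C_u]$ kills $V$. From $\rho_V(E)\rho_V(E)^{(-1)}=k^2$ the coefficients of $\rho_V(E)$ are bounded by $k$ and those of $E_{1_V}$ by $a$, so $wW=\rho_V(E)-E_{1_V}$ has coefficients bounded by $k+a$; both alternatives in (b)(iii) imply $w>k+a$, so $W=0$ and $D:=E_{1_V}=\rho_V(E)$ satisfies $DD^{(-1)}=k^2$ with coefficients bounded by $a$. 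Fix a faithful character $\chi$ of $C_u$ and put $\alpha=\chi(D)=\sum_j a_j\zeta_u^j\in\Z[\zeta_u]$, so $\alpha\overline{\alpha}=k^2$ and $|a_j|\le a$. In the first alternative $k$ is self-conjugate modulo $u$, so Result \ref{turyn} gives $\alpha=k\beta$ with $\beta\in\Z[\zeta_u]$; writing $\alpha$ in the integral basis of \Cref{integralbasis}, its coordinates are bounded by $2^{\delta(u)}a<k$ yet are multiples of $k$, hence vanish, forcing $\alpha=0$, a contradiction. In the second alternative the F-bound (\Cref{Fbound}) gives $k^2\le a^2F(u,k^2)^2/\varphi(F(u,k^2))\le a^2u^2/\varphi(u)$ (using $F(u,k^2)\mid u$), so $k\le ua/\sqrt{\varphi(u)}$, again a contradiction.

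\emph{Main obstacle.} The delicate point is Step 1: one must secure a \emph{single} translation of $E$ that simultaneously turns all the cyclotomic reductions $\theta_{u'}(E)$ into $\langle t\rangle$-fixed elements. This works because the group-element and root-of-unity data in the multiplier equations for the various $u'\mid u$ are pulled back from the top level $u'=u$, and because \Cref{cor:fixingmult} forces the surviving root of unity to be trivial. Only with this uniformity is the \emph{integer} slice $E_{1_V}$ — not merely its images in the $\Z[\zeta_{u'}]$ — constant on orbits, which is precisely what makes the coefficient bound $w>k+a$, with no spurious factor $2^{\delta(u)}$, suffice in Step 2. Everything after that is routine bookkeeping with Turyn's lemma, the integral-basis estimate, and the F-bound.
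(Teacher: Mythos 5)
Your Steps 2--3 are fine \emph{given} the conclusion of Step 1, and your overall strategy (reduce modulo the Sylow $p$-complement, fix the image under $g\mapsto g^t$ via \Cref{thm:fixesn} and \Cref{cor:fixingmult}, split the trivial-character image into an identity-slice plus a multiple of $w$, then contradict the coefficient bounds) is the same as the paper's. But Step 1 has a genuine gap, and it sits exactly at the point you yourself flag as the main obstacle. Your justification for the simultaneous fixing is the ``quotient $\Z[\zeta_u][V]\to\Z[\zeta_{u'}][V]$'' sending $\zeta_u\mapsto\zeta_{u'}$; for a proper divisor $u'$ of $u$ no such ring homomorphism exists (e.g.\ $\Z[i]\to\Z$, $i\mapsto -1$ would force $2=0$), so the multiplier relation for $\theta_{u'}(E)$ cannot be ``pulled back from the top level.'' It must be derived separately for each $u'$ from \Cref{thm:fixesn}, and then the group element $g_{u'}$, the sign, and hence the correcting translate $h_{u'}$ (determined by $h_{u'}^{t-1}=g_{u'}^{-1}$) may all depend on $u'$: the relation $\kappa_{u'}(E^{(t)}E^{(-1)})=\pm k^2\zeta_{u'}^{e_{u'}}g_{u'}$ constrains each cyclotomic component of $E^{(t)}E^{(-1)}$ independently, and nothing you have written rules out different components being supported at different $g_{u'}$. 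Without a single translate working for all $u'\mid u$ simultaneously, you cannot conclude that the integer slices $E_v\in\Z[C_u]$ are constant on $\langle t\rangle$-orbits, and the decomposition $\rho_V(E)=E_{1_V}+wW$ with $W\in\Z[C_u]$ --- the linchpin of Step 2 --- is unproven.

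This is precisely why the paper's proof never leaves $\Z[\zeta_u][V]$ for a single character $\chi$ of order $u$: there one gets only $\rho(X)=X_1+wY$ with $Y\in\Z[\zeta_u]$, and the price is the integral-basis factor $2^{\delta(u)}$ (Result \ref{integralbasis}) and the Galois-norm factor $2\sqrt{\varphi(u)}$, which is exactly where the constants in hypothesis (b)(iii) come from. A telling symptom of the gap is that your argument, if Step 1 held, would prove the theorem with $a$ in place of $2^{\delta(u)}a$ and with only $w>k+a$ needed --- a strictly stronger statement than the one the hypotheses are calibrated for. To repair your write-up you would either need to actually prove the compatibility of the $g_{u'}$ across all divisors $u'$ of $u$ (which needs a new idea), or fall back to the single-character argument, in which case the proof becomes the paper's.
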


\begin{proof}
	Write $G = U \times V$ where $V$ is the Sylow $p$-subgroup of $G$. 
	Assume there exists a $G$-invariant $IW_a(|G|,k^2)$, denoted by $E$.
	Let $\chi$ be a character of $U$ of order $u$.
	Define a homomorphism $\kappa: \Z[G] \to \Z[\zeta_{u}][V]$, such that 
	$\kappa(h)= \chi(h)$ for all $h \in U$ and $\kappa(g) = g$ for all $g \in V$.
	Let $X = \kappa(E)$.
	Then $X = \sum_{g \in V} X_g g$ is in $\Z[\zeta_u][V]$ and $X\overline{X} = k^2$. 
	By Theorem \ref{thm:fixesn} and \Cref{cor:fixingmult}, we have $X^{t} = X$ and hence $X_g = X_{g^t}.$
	Let $\rho$ be the trivial character of $V$.  Then $\rho(X)=\sum_{g \in V}X_g$ and $\rho(X)\overline{\rho(X)}=k^2$.
	Consider the sequence of orbits under the map $g \mapsto g^t$ on $V$. 
	Note that the size of a non-trivial orbit is divisible by $w$. We derive
	\begin{equation}\label{eq:thm158_100_3}
	\rho(X) = X_1 + wY
	\end{equation}
	for some $Y \in \Z[\zeta_u]$.
	
	\medskip
	
	We use the integral basis $B$ of $\Q(\zeta_u)$ over $\Q$ as defined in Result \ref{integralbasis}. 
	
	\medskip
	\noindent
	\textbf{Claim 1:}
	(a) Write $X_1 = \sum_{i=0}^{u-1} b_i \zeta_u^i$. Then $|b_i| \leq a$. \\
	(b) Write $X_1=\sum_{x \in B} c_x x$. Then $|c_{x}| \leq 2^{\delta(u)}a$ for all $x \in B$. \\
	Claim 1(a) is true because $X_1 = \chi(E \cap U)$ and each coefficient of $E$ is bounded by $a$ in absolute value. Claim 1(b) follows from Claim 1(a) and Result \ref{integralbasis}.
	
	\medskip
	\noindent
	\textbf{Claim 2:} If $k$ is self-conjugate modulo $u$ then $\rho(X) = k$.\\
	By conditions (i) and (ii), one can use Result \ref{turyn} and Result \ref{kronecker} to show that $\rho(E) = k$, up to equivalence. 
	Hence, we may replace $X$ by a translate so that $\rho(X) = k$ is satisfied as well.
	
	\medskip
	\noindent
	\textbf{Claim 3:} $Y \neq 0$.\\
	Suppose $Y = 0$. 
	Then $X_1 = \rho(X)$ from (\ref{eq:thm158_100_3}). 
	By Claim 1(a) and using the F-bound (Result \ref{Fbound}), we have $k \leq \frac{ua}{\sqrt{\varphi(u)}}$.
	Thus, by assumption, $k$ is self-conjugate modulo $u$ and $k > 2^{\delta(u)}a$. 
	Then it follows from Claim 2 that $\rho(X) = k$. Hence, $X_1 = k$.  
	Now we view $X_1$ in terms of the integral basis $B$ as shown in Claim 1(b).
	It follows from the uniqueness of a basis representation that $c_{1} = k$.
	Subsequently, by Claim 1(b), $k \leq 2^{\delta(u)}a$. This gives a contradiction and hence, $Y \neq 0$. 
	
	\medskip
	
	Up to now, we have shown that 
	$\rho(X) - X_1$ is a non-zero element in $\Z[\zeta_u]$ which is divisible by $w$. 
	By \cite[Chapter 4, Thm. 2.1]{go}, we have $|X_1| \leq k$.
	Hence $\sum (|\rho(X) - X_1|^2)^{\sigma}\leq 4k^2\varphi(u)$ where the sum runs over $\Gal(\Q(\zeta_u)/Q)$. 
	But this sum is divisible by $w^2$, so $k \geq \frac{w}{2\sqrt{\varphi(u)}}$. 
	Hence, by assumption, $k$ is self-conjugate modulo $u$ and $k < w - 2^{\delta(u)}a$.
	Recall that this implies $\rho(X) = k$, see Claim 2. 
	Expressing all terms of $\rho(X)$ in their basis representations, we deduce that 
	$k - c_{1}$ is divisible by $w$. Note that $k - c_{1} \neq 0$, for otherwise $Y=0$. 
	This implies $w \leq |k - c_{1}| \leq k + 2^{\delta(u)}a$.
	That is, $k \geq w - 2^{\delta(u)}a$. 
	This gives a contradiction.
	Hence, no $G$-invariant $IW_a(up^{d'}, k^2)$ exists. 
\end{proof}

The following result is a direct consequence of Theorem \ref{thm138_36}. It rules out some infinite families of circulant weighing matrices which include two previously open cases $CW(138, 36)$ and $CW(184, 64)$. 

\begin{cor}\label{cor138_36}
	Let $a$, $c$, $d$, $d'$, $m$, $n$ be positive integers. Let $p$, $q$ and $r$ be distinct primes where $p$ and $q$ are odd. 
	Let $$w = \gcd\left( \ord_p(q) , \frac{\ord_p(r)}{\gcd(\ord_{q^c}(r),\ord_p(r))}\right).$$
	Let $H$ be an abelian group of order $q^cp^{d'}$ and exponent $q^cp^d$. 
	Let $G$ be an abelian group and $N$ is a subgroup of $G$ of order $a$ such that $G / N$ is isomorphic to $H$.
	If $\ord_q(r)$ is even and $2a < k < w -2a$ or 
	$\frac{aq^c}{\sqrt{\varphi(q^c)}} < k < \frac{w}{2\sqrt{\varphi(q^c)}},$
	then there is no $G$-invariant $IW(aq^cp^{d'}, k^2)$ where $k = r^mq^n$.
\end{cor}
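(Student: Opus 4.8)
The plan is to derive Corollary~\ref{cor138_36} as a direct specialization of Theorem~\ref{thm138_36}, with the role of the modulus $u$ in the theorem played by $q^c$. First I would set $u = q^c$ and $k = r^m q^n$, and check the coprimality hypothesis: since $p$, $q$, $r$ are distinct primes and $p$ is odd, we have $\gcd(u,p) = \gcd(q^c,p) = 1$ and $\gcd(p,k) = \gcd(p, r^m q^n) = 1$, so condition (a) of Theorem~\ref{thm138_36} holds. Here $G$ has a subgroup $N$ of order $a$ with $G/N \cong H$, and $H$ has order $q^c p^{d'}$ and exponent $q^c p^d$; so $aq^c p^{d'} = a|H| = |G|$ and one applies the theorem to the quotient group $H$, concluding non-existence of an $H$-invariant $IW_a(q^c p^{d'}, k^2)$, which then lifts to the statement that no $G$-invariant $IW(a q^c p^{d'}, k^2)$ exists (the $IW$ without subscript means $a=1$ in the $IW$-notation, but the quotient picks up coefficient bound $a = |N|$, exactly as in the hypotheses of the earlier corollaries like \Cref{cor158_100}).

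Next I would produce the integer $t$ required by condition (b) of Theorem~\ref{thm138_36}. The construction is the standard one: choose $t$ with $t \equiv 1 \bmod{q^c}$ and $\ord_p(t) = w$, where $w = \gcd\!\left(\ord_p(q),\ \ord_p(r)/\gcd(\ord_{q^c}(r),\ord_p(r))\right)$. Such a $t$ exists by the Chinese Remainder Theorem provided $w \mid p-1$ (so that an element of order $w$ exists in $(\Z/p)^\times$), which is automatic since $w$ divides $\ord_p(q) \mid p-1$. I then need to verify (i): that $\sigma_t$ fixes all prime ideal divisors of $k = r^m q^n$ in $\Z[\zeta_{q^c p^d}]$. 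This is where the definition of $w$ does its work. The Galois action on primes above $q$ and above $r$ in the cyclotomic field is governed by Frobenius, and one checks that $t$ lies in the decomposition group of each such prime precisely because $w$ was chosen as the indicated gcd: the condition $\ord_p(t) = w \mid \ord_p(q)$ together with $t \equiv 1 \bmod q^c$ forces $t$ to act trivially on the relevant residue extensions, and similarly for the primes above $r$ after dividing out the part of $\ord_p(r)$ already accounted for modulo $q^c$. Condition (ii), $t \equiv 1 \bmod u$, holds by construction.

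For condition (iii) I would split into the two stated cases exactly mirroring the dichotomy in Theorem~\ref{thm138_36}: in the first case I invoke the hypothesis that $\ord_q(r)$ is even, which (via Result~\ref{selfconjugate_solution2} applied with the prime $q$ in place of the odd prime there, noting $\ord_q(r)$ even) shows $k = r^m q^n$ is self-conjugate modulo $q^c$ — actually more directly, $\ord_{q^c}(r)$ even gives the self-conjugacy — and the inequality $2a < k < w - 2a$ matches $2^{\delta(u)}a < k < w - 2^{\delta(u)}a$ since $\delta(q^c) = 1$; in the second case the inequality $\frac{aq^c}{\sqrt{\varphi(q^c)}} < k < \frac{w}{2\sqrt{\varphi(q^c)}}$ is literally the second alternative of (iii) with $u = q^c$. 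With all hypotheses of Theorem~\ref{thm138_36} verified, its conclusion gives the result. The main obstacle is the verification of (i), the claim that $\sigma_t$ fixes the prime ideals above $q$ and $r$: this requires unwinding the decomposition-group description of primes in $\Z[\zeta_{q^c p^d}]$ and confirming that the gcd defining $w$ is exactly the right quantity — everything else is bookkeeping with the Chinese Remainder Theorem and matching inequalities.
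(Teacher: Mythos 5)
Your argument is essentially the paper's: the corollary is obtained by projecting modulo $N$ to get an $H$-invariant $IW_a(q^cp^{d'},k^2)$ and then specializing \Cref{thm138_36} with $u=q^c$, $k=r^mq^n$, $\delta(q^c)=1$, self-conjugacy coming from $\ord_{q^c}(r)$ being even, and a multiplier $t\equiv 1 \pmod{q^c}$ with $\ord_p(t)=w$, exactly as in the one-line proof of \Cref{cor158_100}. The only point to tighten is condition (b)(i) when $d\ge 2$: you should choose $t$ congruent modulo $p^d$ to an element of $\langle q\rangle\cap\langle r^{\ord_{q^c}(r)}\rangle\subseteq(\Z/p^d\Z)^{\times}$ whose image modulo $p$ has order $w$ (such an element exists because the $p$-free part of the order of this intersection equals $w$), since your cyclicity argument based only on $\ord_p(t)=w\mid\ord_p(q)$ pins down $t$ merely modulo $p$ and therefore does not by itself place $\sigma_t$ in the decomposition groups of the primes above $q$ and $r$ in $\Q(\zeta_{q^cp^d})$.
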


\begin{cor}
	Let $b \in \{0, 1\}$. 
	Then $CW(2^b3^c23^d, 36)$ and $CW(2^c23^d, 64)$ do not exist for any $c$, $d \geq 1$. 
\end{cor}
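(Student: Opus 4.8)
The plan is to derive both non-existence statements as direct applications of Corollary~\ref{cor138_36}, after verifying that the relevant primes satisfy the numerical hypotheses. For $CW(2^b3^c23^d,36)$ we would set $p=3$, $q=23$, $r=2$, so that $k=6=r^1q^0$ (taking $m=1$, $n=0$), the ambient abelian group $G=C_{2^b3^c23^d}$ is cyclic with trivial $N$ (so $a=1$), and $H=G$ has exponent $q^cp^d$ with $d'=d$. For $CW(2^c23^d,64)$ we would instead take $p=23$, $q=3$ (wait -- here we need $q$ odd and the weight a prime power), $r=2$, $k=8=2^3$; we reorganize so that $k=r^mq^n$ with $m=3$, $n=0$, $G=C_{2^c23^d}$ cyclic, $a=1$, $H=G$ of order $q^cp^{d'}=3^c23^d$... so actually one sets $p=23$ and $q=3$? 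No: $CW(2^c23^d,64)$ has $v=2^c23^d$, so the odd primes dividing $v$ are $3$? There is no $3$. So here $p=23$, $q$ must be another odd prime dividing $v$, but $v=2^c23^d$ has only the odd prime $23$. So we instead take the group to have exponent involving only $p=23$ and the factor $2^c$ is absorbed into $N$: that is, $a=2^c$, $N=C_{2^c}$, $H=C_{23^d}$, but then $H$ must have exponent $q^cp^d$ -- so this forces a second odd prime. The cleaner reading: for $CW(2^c23^d,64)$ use $q^c=2^c$ with $q=2$? But the corollary requires $p,q$ odd. So the correct reading must route $2^c$ through the order-$a$ subgroup after all, with the trivial constraint $q^cp^{d'}$ simply meaning we allow $q^c=1$, i.e.\ $c=0$ there; then $a=2^c$, $H=C_{23^d}$ of exponent $p^d=23^d$. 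Let me re-examine: with $c=0$ in the corollary, $H$ has order $p^{d'}$ and exponent $p^d$, $w=\ord_p(r)/\gcd(1,\ord_p(r))=\ord_p(r)=\ord_{23}(2)$, and we need $\frac{a}{\sqrt{\varphi(1)}}=a<k<\frac{w}{2}$, i.e.\ $2^c<8<\tfrac12\ord_{23}(2)$.

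So the first step is to compute the relevant multiplicative orders: $\ord_3(23)$, $\ord_3(2)$, and $\ord_{23}(2)$, and to check that $\ord_{23}(2)$ is even (or that $\ord_q(r)$ is even as required, with $q=3$, $r=2$ in the first case and $q=23$, $r=2$ reversed in the second -- this is where the ``$\ord_q(r)$ is even'' hypothesis of Corollary~\ref{cor138_36} gets invoked for the self-conjugacy branch, or else we use the second ``$\frac{aq^c}{\sqrt{\varphi(q^c)}}<k<\frac{w}{2\sqrt{\varphi(q^c)}}$'' branch). For $CW(2^b3^c23^d,36)$: $p=3$, $q=23$, $r=2$, $k=6$, $a=2^b$; we need $\ord_{23}(2)$ even and $2\cdot 2^b<6<w-2\cdot 2^b$ where $w=\gcd(\ord_3(23),\ \ord_3(2)/\gcd(\ord_{23}(2),\ord_3(2)))$; since $\ord_3(23)=\ord_3(2)=2$ (as $23\equiv 2\bmod 3$ and $2^2=4\equiv1$), and $\ord_{23}(2)=11$ which is odd -- so that branch fails. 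Hence one must instead use the coprime-to-$q$ branch $\frac{aq^c}{\sqrt{\varphi(q^c)}}<k<\frac{w}{2\sqrt{\varphi(q^c)}}$, or reassign the roles of $p,q,r$. Trying $p=3$, $q=2$? Not allowed. The genuine obstacle, and the step I expect to require the most care, is precisely this bookkeeping: matching $(2^b3^c23^d,36)$ and $(2^c23^d,64)$ to the template $(aq^cp^{d'}, (r^mq^n)^2)$ with all parity/coprimality/inequality constraints simultaneously satisfied, and in particular identifying which of the two alternative inequality conditions in hypothesis~(iii) of Theorem~\ref{thm138_36} (equivalently in Corollary~\ref{cor138_36}) is the operative one for each family.

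Concretely, I would proceed as follows. First, fix the assignment for $CW(2^b3^c23^d,36)$: $r=2$, $q=3$, $p=23$ is wrong since $23\nmid$ well it does divide $v$; try $p$ = the prime whose power is NOT the weight's base and appears with a ``$d$'' exponent. The weight $36=6^2$ has $6=2\cdot3$, so $r^mq^n$ with $\{r,q\}\subseteq\{2,3\}$ and $p$ = the third prime $23$. Then $d'$ corresponds to the $23$-part, matching $23^d$, and $q^c$ with $q\in\{2,3\}$ is the $2^b$ or $3^c$ part -- but the template has only one factor $q^c$ besides $p^{d'}$ and $a$, so one of $2^b,3^c$ must be absorbed into $a=|N|$. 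Take $a=2^b$, $q=3$, $p=23$, $c\to c$, $r=2$, $m=1$, $n=1$, so $k=6$; then $w=\gcd(\ord_{23}(3),\ \ord_{23}(2)/\gcd(\ord_3(2),\ord_{23}(2)))$. Compute $\ord_{23}(2)=11$, $\ord_3(2)=2$, $\gcd(11,2)=1$, so the second argument is $11$; $\ord_{23}(3)$: $3^1=3,3^2=9,3^3=27\equiv4,\ldots$; $3^{11}\equiv ?$ -- one checks $\ord_{23}(3)=11$ as well; so $w=\gcd(11,11)=11$. The self-conjugacy branch needs $\ord_q(r)=\ord_3(2)=2$ even (true) and $2a<k<w-2a$, i.e.\ $2^{b+1}<6<11-2^{b+1}$, which holds for $b=0$ ($2<6<9$) but fails for $b=1$ ($4<6$ yes, but $6<7$ yes -- so $b=1$ works too!). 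Thus both $b=0,1$ are covered, giving non-existence of $CW(2^b3^c23^d,36)$ for all $c,d\ge1$. For $CW(2^c23^d,64)$: set $p=23$, $q$ must be an odd prime with $q^c\mid v$ -- but $v=2^c23^d$ has no odd prime besides $23$, so take $c=0$ in the template, $a=2^c$ (the $2$-part of $v$), $H=C_{23^{d'}}$ with $d'=d$, $r=2$, $k=8=2^3$ ($m=3$, $n=0$, $q$ arbitrary odd prime coprime to everything, say $q=3$, with $q^c=q^0=1$); then $w=\gcd(\ord_{23}(3),\ \ord_{23}(2))=\gcd(11,11)=11$... but we need $\frac{a}{\sqrt{\varphi(1)}}=2^c<8<\frac{11}{2}=5.5$, which is false. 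So this routing fails, and one must instead double-check whether the intended $w$ for the ``$64$'' case uses $\ord_{23}(2)$ computed differently, or whether a larger $w$ arises. The resolution of this tension -- likely by a more careful choice of $q$ (any odd prime $q$ with $\ord_{23}(q)$ coprime to $\ord_{23}(2)=11$, e.g.\ $q=5$ if $\ord_{23}(5)\ne11$) so that $w=\ord_{23}(2)/\gcd(\ord_q(2),\ord_{23}(2))$ could be larger, or by noting $11$ is prime so no improvement is possible, forcing instead $r=2$, $q=23$, $p=$ nothing available -- is the crux. If $11<16$ genuinely obstructs, then the ``$64$'' case is \emph{not} a consequence of Corollary~\ref{cor138_36} as I've set it up, and I would need to revisit whether a different parameter identification (perhaps with $p$ chosen so that $\ord_p(2)$ is large, which is impossible here since the only odd prime is $23$) applies -- suggesting the corollary's proof for $CW(2^c23^d,64)$ actually routes through the \emph{first} branch with $q=23$ playing the role of $p$ and some auxiliary prime playing $q$; I would present the argument once I have pinned down this assignment by trying $q$ among the first several primes and selecting one making the inequality go through.

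Granting the correct assignment is found, the remainder is immediate: each family's parameters satisfy hypotheses (a)--(c) of Corollary~\ref{cor138_36} (pairwise coprimality of $u=q^c,p,q,r$ is clear; self-conjugacy of $k$ mod $u$, or the alternative inequality, as verified above), whence no $G$-invariant $IW(aq^cp^{d'},k^2)$ exists; specializing $G$ to the cyclic group (with $N=C_{2^b}$ or $C_{2^c}$ accordingly) yields the non-existence of $CW(2^b3^c23^d,36)$ and $CW(2^c23^d,64)$ for all $c,d\ge1$. The main obstacle, to reiterate, is purely the combinatorial matching of the stated small parameters to the template together with verifying which inequality branch applies; once that is settled the proof is a one-line invocation.
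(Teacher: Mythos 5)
Your treatment of the first family is essentially right and matches the intended application of \Cref{cor138_36}: with $p=23$, $q=3$, $r=2$, $k=6$, $a=2^b$, $N=C_{2^b}$, one has $\ord_{23}(2)=\ord_{23}(3)=11$, hence $w=\gcd(11,11)=11$, $\ord_3(2)=2$ is even, and $2\cdot 2^b<6<11-2\cdot 2^b$ holds for both $b=0$ and $b=1$. Once the meandering over parameter assignments is stripped away, that half is a correct one-line invocation.

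The second family, $CW(2^c23^d,64)$, is however left unproved, and this is a genuine gap rather than unfinished bookkeeping. Within \Cref{cor138_36} your only option is to absorb the factor $2^c$ into the order-$a$ subgroup $N$, and then, as you correctly observe, both branches of the hypothesis fail once $c\ge 2$: the first would need $2\cdot 2^c<8$, the second would need $8<\tfrac{w}{2}\le\tfrac{11}{2}$. Your proposed escape routes (varying the auxiliary odd prime $q$) cannot succeed, because $w$ always divides $\ord_{23}(2)=11$, so $w\le 11$ regardless; the obstruction is the penalty coming from $a=2^c$, not the size of $w$. The correct route is to bypass \Cref{cor138_36} and apply \Cref{thm138_36} directly with $u=2^c$, $a=1$, $p=23$, $k=8$, $d'=d$: here $2^c$ enters as the cyclotomic modulus $u$ rather than as the subgroup order $a$, so the penalty in condition b(iii) is only $2^{\delta(u)}a=2$; the weight $k=8$ is trivially self-conjugate modulo $2^c$ (its only prime divisor is $2$, and the $2$-free part of $2^c$ is $1$); and choosing $t\equiv 1\bmod{2^c}$, $t\equiv 2\bmod{23^d}$ makes $\sigma_t$ fix all primes above $2$ with $w=\ord_{23}(t)=11$, whence $2<8<11-2=9$. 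The distinction between pushing a factor of the order into $a$ (costing a factor of $2a$ or $ua/\sqrt{\varphi(u)}$ in the inequality) and pushing it into $u$ (costing only $2^{\delta(u)}$) is precisely what \Cref{thm138_36} is designed to exploit, and it is the idea missing from your writeup.
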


For $v = 2^cp^d$ and $k = 2^m3^n$, we derive the following stronger version of Theorem \ref{thm138_36}.
Specifically, we remove the self-conjugacy assumption stated in b(iii) of Theorem \ref{thm138_36}. 
Note that if $n \geq 1$, then $k$ is not self-conjugate modulo $2^c$ if $c \geq 3$. 
We need the following lemma, see \cite[Thm. 3.3.15]{MingMingTanThesis}.
\begin{lem}\label{lem:thm184_36}
	Let $m$ and $n$ be positive integers and $k = 2^m3^n$. 
	Let $Y \in \Z[\zeta_{8}]$ such that $|Y|^2 = k^2$.
	Let $B = \{1,\zeta_8, \zeta_8^2, \zeta_8^3 \}$. 
	Then there is a root of unity $\eta$ such that $Y \eta = \sum_{x \in B} d_x x$ satisfies the following: 
	\begin{enumerate}
		\item If $m > 0$, then there exists $x \in B$ such that $|d_x| > 2$.
		\item If $m = 0$ and $n \geq 2$, then either $Y\eta = \pm k$ or $|d_x| > 2$ for all $x \in B$. 
	\end{enumerate}
\end{lem}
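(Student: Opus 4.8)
The plan is to reduce the statement to a finite verification by exploiting the field descent and the small dimension of $\Q(\zeta_8)$ over $\Q$. First I would observe that $8 = F(8, k^2)$ is essentially forced here: since $k = 2^m 3^n$ and $3$ is not self-conjugate modulo $8$ (indeed $\ord_8(3) = 2$, but $3 \equiv 3$, $3^2 \equiv 1 \bmod 8$, so no power of $3$ is $\equiv -1$), one should first record which cyclotomic field $Y$ actually lives in. In fact it suffices to work directly in $\Z[\zeta_8]$ as the hypothesis states. Using \Cref{cassels2} with $c = 3$, write $Y = \sum_{x \in B} c_x x$ with $B = \{1, \zeta_8, \zeta_8^2, \zeta_8^3\}$ and $|c_x| \le \sqrt{k^2} = k$. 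Then $|Y|^2 = k^2$ together with $\zeta_8^2 = i$ and the relation $\zeta_8 + \zeta_8^3 \cdot(\pm)$... concretely, expand $Y\overline{Y}$ in the basis and extract the two rational equations: the coefficient of $1$ gives $c_0^2 + c_1^2 + c_2^2 + c_3^2 = k^2$ (using $\overline{\zeta_8} = \zeta_8^{-1} = -\zeta_8^3$, $\overline{\zeta_8^2} = -\zeta_8^2$, $\overline{\zeta_8^3} = -\zeta_8$ and $\zeta_8^4 = -1$), and the coefficient of $\zeta_8^2 = i$ gives $c_0 c_2 - c_1 c_3 = 0$ (up to a sign I would pin down carefully). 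So the data is: integers $c_0, c_1, c_2, c_3$ with $c_0^2 + c_1^2 + c_2^2 + c_3^2 = k^2$ and $c_0 c_2 = c_1 c_3$.

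Next I would analyze this Diophantine system modulo small powers of $2$ and $3$. The key arithmetic input is a $2$-adic valuation argument: from $c_0 c_2 = c_1 c_3$ and the sum-of-squares equation one shows that the multiset $\{c_0, c_1, c_2, c_3\}$ has a controlled $2$-adic structure — in the $m > 0$ case, $k^2$ is divisible by $4$, and I would argue that if all $|c_x| \le 2$ then $\sum c_x^2 \le 16 < k^2$ as soon as $k \ge 5$, handling $k \in \{2, 4, 6, 8, \dots\}$ — wait, $k = 2^m 3^n$ with $m \ge 1$ means $k \ge 6$, so $k^2 \ge 36 > 16$, giving statement (1) immediately. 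That reduces (1) to the trivial observation that $\sum_{x} c_x^2 = k^2 \ge 36$ forces some $|c_x| > 2$. For statement (2), $m = 0$ and $n \ge 2$, so $k = 3^n \ge 9$ and $k^2 \ge 81$; here the bound $|c_x| \le 2$ again fails outright, so the real content must be different from what a naive reading suggests — I suspect the intended normalization is that after multiplying by a suitable root of unity $\eta$ one can assume a \emph{stronger} structural form, e.g. that the representation is "reduced" so that either it collapses to $\pm k$ (all mass on one basis element) or the mass is genuinely spread with each $|d_x| > 2$. So the correct plan: classify all solutions $(c_0, c_1, c_2, c_3)$ of $c_0^2+c_1^2+c_2^2+c_3^2 = k^2$, $c_0 c_2 = c_1 c_3$ up to the action of multiplication by $\zeta_8$ (which cyclically permutes and sign-changes the $c_x$) and complex conjugation, and show each orbit contains a representative of one of the two stated forms.

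The main obstacle will be organizing the orbit classification cleanly. Multiplication by $\zeta_8$ sends $\sum c_x \zeta_8^x$ to $\sum c_x \zeta_8^{x+1}$, and using $\zeta_8^4 = -1$ this is the signed cyclic shift $(c_0, c_1, c_2, c_3) \mapsto (-c_3, c_0, c_1, c_2)$; together with conjugation and the global sign $\pm$, this is a group of order $16$. I would show: if some $c_x = 0$, the constraint $c_0 c_2 = c_1 c_3$ forces another coordinate to vanish or forces a "two nonzero entries in symmetric position" pattern, and then $c_a^2 + c_b^2 = k^2$ with the Gaussian-integer factorization of $k^2 = (3^n)^2$ — but $3$ is inert in $\Z[i]$, so $a^2 + b^2 = 3^{2n}$ forces $\{|a|,|b|\} = \{0, 3^n\}$, i.e. $Y\eta = \pm k$. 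If no $c_x = 0$, then from $c_0 c_2 = c_1 c_3$ and a $3$-adic valuation count on the sum of four squares equal to $3^{2n}$ (at most... using that $3 \mid c_0^2 + c_1^2 + c_2^2 + c_3^2$ forces an even number of the $c_x$ to be divisible by $3$, combined with the product relation), I would descend to conclude $3 \mid c_x$ for all $x$, reducing to the same equation for $k/3$; iterating, the "all nonzero" solutions satisfy $3 \mid \gcd(c_x)$ at every stage, and the base case $n$ small (or the terminal $k = 3$, handled separately since the conclusion only claims $n \ge 2$) pins down that every $|c_x|$ is a positive multiple of $3$, hence $> 2$. The delicate point throughout is the interplay between the quadratic constraint $c_0 c_2 = c_1 c_3$ and the sum-of-squares equation under $3$-adic and $2$-adic valuations; I would handle this by a short case split on $\nu_3$ of each coordinate, which is the step I expect to require the most care.
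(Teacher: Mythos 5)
Your reduction of part (1) to the counting bound $\sum_x d_x^2 = k^2 \ge 36 > 16$ is correct and is all that case needs, since multiplication by $\pm\zeta_8^j$ only permutes the coefficient vector up to signs. The trouble starts with the quadratic constraint you extract from $Y\overline{Y}=k^2$. Writing $Y=c_0+c_1\zeta_8+c_2\zeta_8^2+c_3\zeta_8^3$ and $\overline{Y}=c_0-c_3\zeta_8-c_2\zeta_8^2-c_1\zeta_8^3$, a direct expansion shows that the coefficient of $\zeta_8^2$ in $Y\overline{Y}$ vanishes \emph{identically}; the product lies in $\Z[\sqrt 2]=\Z+\Z(\zeta_8-\zeta_8^3)$, and the genuine rationality condition is the vanishing of the $\sqrt2$-component, namely $c_0c_1+c_1c_2+c_2c_3-c_0c_3=0$, i.e.\ $c_1(c_0+c_2)=c_3(c_0-c_2)$ — not $c_0c_2=c_1c_3$. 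This is not a sign you can "pin down later"; it is a different quadratic form, and your entire case analysis for part (2) is built on the wrong one.

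With the correct constraint your plan does not close. If (after rotating by a power of $\zeta_8$) $c_0=0$, the constraint gives $c_2(c_1+c_3)=0$, so besides the sub-case $c_2=0$ (where $c_1^2+c_3^2=3^{2n}$ and inertness of $3$ in $\Z[i]$ does give $Y\eta=\pm k$, as you say) there is the sub-case $c_1=-c_3$, $c_2\neq 0$, leading to $2c_1^2+c_2^2=3^{2n}$. This is the norm form of $\Z[\sqrt{-2}]$, in which $3$ \emph{splits} ($-2\equiv 1 \bmod 3$), so primitive solutions exist: for $n=2$, $(c_1,c_2)=(4,7)$ gives $Y=4\zeta_8+7\zeta_8^2-4\zeta_8^3$ with $Y\overline{Y}=32+49=81$, and every translate $\pm\zeta_8^jY$ has coefficient vector a signed cyclic shift of $(0,4,7,-4)$ — never $(\pm 9,0,0,0)$ and never with all entries exceeding $2$ in absolute value. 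So your claimed dichotomy ("a zero coordinate forces collapse to $\pm k$") is false, and the element above shows the two target forms of the lemma cannot be reached by any choice of $\eta$ for this $Y$; any correct argument must either exclude such $Y$ using information beyond $|Y|^2=k^2$ (e.g.\ the additional structure $\rho(X)=X_1+wM$ available in Theorem \ref{thm184_36}) or handle this orbit explicitly. Separately, your $3$-adic descent in the "all $c_x$ nonzero" case is only sketched: the congruence $\sum c_x^2\equiv 0\bmod 9$ together with the product relation does not obviously force $3\mid c_x$ for all $x$, and since $3$ is not inert in the relevant quadratic subfields $\Q(i)$, $\Q(\sqrt2)$, $\Q(\sqrt{-2})$ simultaneously, the descent needs a real argument rather than an appeal to valuations.
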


\begin{thm} \label{thm184_36} 
	Let $m$, $n$, $c$, $d$ be positive integers, and $p > 3$ be prime.
	Define $$w = \gcd\left( \frac{\ord_p(3)}{\gcd(\ord_p(3), \ord_{2^c}(3))}, \ord_p(2)\right).$$
	Let $k = 2^m3^n$ such that $k < w - 2$ and $n \geq 2$ if $m = 0$. 
	Let $G$ be an abelian group of order $2^cp^{d'}$ and exponent $2^cp^d$.
	Then there is no $G$-invariant $IW(2^cp^{d'}, k^2)$. 
\end{thm}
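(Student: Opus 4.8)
The plan is to mimic the proof of Theorem \ref{thm138_36}, but replace the self-conjugacy argument in the Sylow-$2$ part with the explicit field-descent computation provided by Lemma \ref{lem:thm184_36}. First I would write $G = U \times V$ where $U = C_{2^c}$ (the Sylow $2$-subgroup, which I may assume cyclic after passing to a suitable quotient, since otherwise the exponent constraint already bounds things via the $F$-bound) and $V$ is the Sylow $p$-subgroup of exponent $p^d$. Assume a $G$-invariant $IW(2^cp^{d'},k^2)$, call it $E$. Let $\chi$ be a character of $U$ of order $2^c$ and define $\kappa\colon \Z[G]\to \Z[\zeta_{2^c}][V]$ by $\kappa(h)=\chi(h)$ for $h\in U$ and $\kappa(g)=g$ for $g\in V$; set $X = \kappa(E)$, so $X\overline{X}=k^2$. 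Since $p>3$ one checks $F(2^cp^{d'},k^2)$ has $p$-part equal to $1$, so by McFarland's theorem (Result \ref{McFarland}), applied to the image of $E$ under the projection onto the Sylow $p$-subgroup, there is an integer $t$ with $t\equiv 1 \bmod{2^c}$ that fixes the prime ideals above $k$ in $\Z[\zeta_{2^cp^d}]$ and with $\ord_p(t) = w$ (choosing $t$ a suitable primitive-root power mod $p^d$ congruent to $1$ mod $2^c$, which is possible because the $F$-value forces $w \mid \ord_p(q)/\gcd(\ldots)$ for each $q\in\{2,3\}$ dividing $k$). Then Theorem \ref{thm:fixesn} together with Corollary \ref{cor:fixingmult} gives $X^{(t)}=X$, hence $X_g = X_{g^t}$ for the cyclotomic coefficients.

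Next, let $\rho$ be the trivial character of $V$, so $\rho(X) = \sum_{g\in V}X_g$ with $\rho(X)\overline{\rho(X)} = k^2$. Partitioning $V$ into orbits of $g\mapsto g^t$, every non-trivial orbit has size divisible by $w$, so $\rho(X) = X_1 + wY$ for some $Y\in\Z[\zeta_{2^c}]$, where $X_1 = \chi(E\cap U)\in\Z[\zeta_{2^c}]$ has all ``$\zeta_{2^c}^i$-coordinates'' bounded by $1$ in absolute value. The key point is now to descend $X_1$ and $\rho(X)$ to $\Z[\zeta_8]$: by the $F$-bound machinery (Result \ref{Fdescent}), $\rho(X)$ is equivalent to an element of $\Z[\zeta_{F(2^c,k^2)}]$, and $F(2^c,k^2)\mid 8$ because $3$ is self-conjugate mod $2^c$ only for $c\le 2$ and in general $\nu_2(3^2-1)=3$ controls the exponent — so after multiplying by a suitable root of unity and applying $\sigma$, I may assume $\rho(X)\in\Z[\zeta_8]$, and likewise arrange $X_1 \in \Z[\zeta_8]$. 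Write $Y_0 = \rho(X)\eta \in \Z[\zeta_8]$ with $|Y_0|^2 = k^2$, apply Lemma \ref{lem:thm184_36} in the basis $B = \{1,\zeta_8,\zeta_8^2,\zeta_8^3\}$ to $Y_0$, and compare coefficients of $X_1$ (whose $B$-coordinates are bounded by $2^{\delta}\cdot 1$, in fact by a small explicit constant, via Corollary \ref{cassels2}) with those of $\rho(X)=X_1+wY$.

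The endgame splits according to Lemma \ref{lem:thm184_36}. If $m>0$: the lemma gives a coordinate $x\in B$ with $|d_x|>2$, while the corresponding coordinate of $X_1$ is at most $2$ in absolute value (from Corollary \ref{cassels2} with $n$ replaced by the relevant bound, or directly since the $\zeta_{2^c}^i$-coefficients of $X_1$ are $0,\pm1$). Hence the $x$-coordinate of $wY$ is nonzero, so $w$ divides a nonzero integer of absolute value $|d_x - c_x|\le k + 2$, giving $w \le k+2$, contradicting $k < w-2$. If $m=0$ and $n\ge2$: the lemma says either $Y_0 = \pm k$ or $|d_x|>2$ for all $x\in B$; the second case is handled exactly as before. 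In the first case $\rho(X)$ is (up to the root of unity $\eta$) the rational integer $\pm k$, so $\rho(X) = \pm k$ after normalizing; then $X_1$ is forced to be a rational integer too (all its nonconstant $\zeta_8$-coordinates must be divisible by $w$ hence zero, since they are bounded in absolute value by something less than $w$), so $X_1 = c_1 \in\Z$ with $|c_1|\le 1$, and $k - c_1 = wY'$ for the integer $Y' = \rho(X)-X_1$ over $w$; since $k \neq c_1$ (as $k\ge 9 >1$) we get $w \le |k-c_1|\le k+1 < w$, a contradiction. In all cases no such $E$ exists.

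The main obstacle I anticipate is the bookkeeping in the descent step: making precise that both $X_1$ and $\rho(X)$ can be simultaneously moved into $\Z[\zeta_8]$ by the \emph{same} automorphism-and-root-of-unity adjustment (so that the relation $\rho(X)=X_1+wY$ is preserved with $Y\in\Z[\zeta_8]$), and pinning down the exact coordinate bound on $X_1$ in the basis $B$ — here one must be careful that $F(2^c,k^2)$ really divides $8$ for all relevant $c$ and $k=2^m3^n$, which uses $\nu_2(3^2-1)=3$ and the definition of $b(2,2^c,k^2)$, and that the projection $E\cap U$ has $0,\pm1$ coefficients so Corollary \ref{cassels2} (or Result \ref{integralbasis}) yields a clean bound. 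Once the descent is set up correctly, the contradiction is purely the ``$w$ divides a small nonzero integer'' argument already used in Theorems \ref{thm158_100} and \ref{thm138_36}.
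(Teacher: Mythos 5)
Your proposal is correct and follows essentially the same route as the paper: reduce to $X=\kappa(E)\in\Z[\zeta_{2^c}][V]$, obtain $X^{(t)}=X$ via the generalized multiplier theorem and Corollary \ref{cor:fixingmult}, write $\rho(X)=X_1+wY$, descend $\rho(X)$ to $\Z[\zeta_8]$ using $F(2^c,k^2)\mid 8$, and apply Lemma \ref{lem:thm184_36} to force $w\le k+2$, contradicting $k<w-2$. The paper even treats your $m=0$, $\rho(X)=\pm k$ subcase uniformly (since then $|d_1|=k>2$), so your separate handling of it is harmless, and your worry about the simultaneous descent is resolved because the adjustment is a single multiplication by a root of unity, which preserves both the relation $\rho(X)=X_1+wY$ and the coefficient bound $|c_x|\le 2$ on $X_1$.
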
 

\begin{proof}
	Let $t$ be an integer such that $t \equiv 1 \bmod{2^c}$ and $\ord_p(t) = w$. 
	Then $\sigma_t$ fixes all prime ideal divisors of $k$ in $\Z[\zeta_{2^cp^d}]$.
	Following the same notations and the same arguement as in Theorem \ref{thm138_36}, we arrive at the following equation, 
	$$\rho(X) = X_1 + wY.$$
	for some $Y \in \Z[\zeta_{2^c}]$.
	Note that $\rho(X) \in \Z[\zeta_{2^c}]$ and $\rho(X) \overline{\rho(X)} = k^2$. Using Definition \ref{defi:F-value}, we see that $F(2^c, k^2)$ divides $8$ and hence by Result \ref{Fdescent}, we may replace $X$ by a translate so that $\rho(X) \in \Z[\zeta_8]$.
	Let $B$ be the integral basis of $\Q[\zeta_{2^c}]$ over $\Q$ as defined in Result \ref{integralbasis}. 
	Write $X_1 = \sum_{x \in B}c_xx$ and $M = \sum_{x \in B} M_x x$ where $c_x$ and $M_x$ are integers. 
	We have $\rho(X) =\sum_{x \in B} (c_x+wM_x)x$.
	Let $d_x = c_x + w M_x$ for each $x \in B$. 
	By \Cref{lem:thm184_36} and by replacing $X$ with a suitable translate, we deduce that there is an $x$ such that $|d_x| > 2$. 
	By definition, $|c_x| \leq 2$. 
	Hence, for this $x$, $M_x$ is non-zero and $w$ divides the non-zero element $d_x - c_{x}$.
	By \Cref{cassels2}, $|d_x| \leq k$. 
	Thus, $w \leq |d_x|+|c_{x}| \leq k+2$.
	But by assumption, $w > k + 2$.
	Hence, no $G$-invariant $IW(2^cp^{d'}, k^2)$ exists. 
\end{proof}

Theorem \ref{thm184_36} is used to rule out the following infinite families of circulant weighing matrices which include the open case $CW(184, 36)$.

\begin{cor} \label{cor184_36}
	There is no $CW(2^c23^d,36)$ for any $c \geq 1$ and $d \geq 1$.
\end{cor}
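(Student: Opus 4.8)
The plan is to apply \Cref{thm184_36} with $p = 23$ and $k = 6$, i.e. $m = n = 1$. First I would verify that the parameters of $CW(2^c 23^d, 36)$ fit the hypotheses: a circulant weighing matrix $CW(2^c 23^d, 36)$ is a $C_{2^c 23^d}$-invariant $IW(2^c 23^d, 36) = IW(2^c 23^d, 6^2)$, and the cyclic group $C_{2^c 23^d}$ has order $2^c 23^d$ and exponent $2^c 23^d$, so it is an abelian group of the required form with $d' = d$. Here $k = 6 = 2^1 3^1$, so $m = 1 > 0$ and the side condition ``$n \geq 2$ if $m = 0$'' is vacuous. It remains to check the arithmetic condition $k < w - 2$, i.e. $w > 8$, where
$$w = \gcd\left( \frac{\ord_{23}(3)}{\gcd(\ord_{23}(3), \ord_{2^c}(3))},\ \ord_{23}(2)\right).$$

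The computational heart is evaluating $\ord_{23}(3)$ and $\ord_{23}(2)$. Since $23$ is prime, both orders divide $22 = 2 \cdot 11$, so each is one of $1, 2, 11, 22$. A short direct check shows $3$ is a primitive root mod $23$ (equivalently $3^{11} \equiv -1 \bmod 23$), so $\ord_{23}(3) = 22$; likewise $2^{11} \equiv -1 \bmod 23$, so $\ord_{23}(2) = 22$. For the denominator, $\ord_{2^c}(3)$ is a power of $2$, hence $\gcd(\ord_{23}(3), \ord_{2^c}(3)) = \gcd(22, 2^{a}) = 2$ for any $c \geq 2$ (and $= 1$ if $c = 1$, since $\ord_{2}(3) = 1$). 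In either case $\ord_{23}(3)/\gcd(\ord_{23}(3),\ord_{2^c}(3))$ equals $11$ or $22$, and
$$w = \gcd(11 \text{ or } 22,\ 22) = 11 \text{ or } 22 \geq 11 > 8.$$
Thus $k = 6 < w - 2$, and \Cref{thm184_36} yields the non-existence of any $C_{2^c 23^d}$-invariant $IW(2^c 23^d, 36)$, in particular of $CW(2^c 23^d, 36)$, for all $c, d \geq 1$.

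I expect no real obstacle here: the only content beyond a citation to \Cref{thm184_36} is the modular arithmetic computing the two multiplicative orders modulo $23$ and the gcd. The one point that needs a moment's care is the dependence of $\gcd(\ord_{23}(3), \ord_{2^c}(3))$ on $c$ — one must note that $\ord_{2^c}(3)$ is always a $2$-power, so the gcd with $22$ is at most $2$, which keeps the numerator at $11$ or $22$ regardless of $c$; this is what makes the bound $w \geq 11$ uniform in both $c$ and $d$. Everything else is a direct substitution into the statement of \Cref{thm184_36}.
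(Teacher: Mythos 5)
Your overall strategy is exactly the intended one: the paper derives this corollary by direct substitution into \Cref{thm184_36} with $p=23$ and $k=6$, checking that $k<w-2$. However, your modular arithmetic is wrong in both order computations. In fact $2^{11}=2048=23\cdot 89+1\equiv 1\pmod{23}$ and $3^{11}\equiv 1\pmod{23}$ (compute $3^2=9$, $3^4\equiv 12$, $3^8\equiv 6$, so $3^{11}\equiv 6\cdot 9\cdot 3=162\equiv 1$), so neither $2$ nor $3$ is a primitive root modulo $23$; both have order $11$, not $22$. (Indeed both are quadratic residues mod $23$.) With the correct values, $\gcd(\ord_{23}(3),\ord_{2^c}(3))=\gcd(11,2^a)=1$ for every $c$, the first entry of the gcd defining $w$ is $11$, and $w=\gcd(11,11)=11$. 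Since $6<11-2=9$, the hypothesis $k<w-2$ of \Cref{thm184_36} is still satisfied and the conclusion survives, so the slip is not fatal — but as written your justification of the key inequality rests on two false congruences and should be corrected.
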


All previous theorems show that, for some parameters $v$ and $k$, if $k$ falls into a bound, then $IW(v,k)$ can not exist. The upper bound is determined by the order of a multiplier of $IW(v,k)$. 
The following deals with a special case of infinite families of circulant weighing matrices which further improve the upper bound.

\begin{thm}\label{thm190_100}
Let $a$, $u$, $d$, $k$ be positive integers and $p$ be odd prime congruent to $3$ mod $4$. 
If the following conditions hold, then there is no $ICW_a(up^d, k^2)$. 
\begin{enumerate}[{\normalfont (a)}]
\item $\gcd(u, p) = \gcd(p, k) = 1$.
\item $k > 2^{\delta(u)}a$.
\item If $d > 1$, then $p > \frac{2(k+2^{\delta(u)}a)}{p-1} + 2^{\delta(u)+1}a$, else $p > 2^{\delta(u)+1}a +1$.
\item $k$ is self-conjugate modulo $u$. 
\item There exists an integer $t$ such that 
\begin{enumerate}[{\normalfont (i)}]
\item $\sigma_t$ fixes all prime ideal divisors of $k$ in $\Z[\zeta_{up^d}]$. 
\item $t \equiv 1 \bmod{u}$.
\item ${\rm ord}_p(t) = \frac{p-1}{2}$.
\item $t^{\frac{p-1}{2}} \not \equiv 1 \bmod{p^2}$ if $d > 1$. 
\end{enumerate}
\item For each integer $M$ that satisfies $1 \leq |M| \leq 2^{\delta(u)+1}a$ and $\left | k - M\frac{p-1}{2} \right | \leq 2^{\delta(u)}a$, we have $M(4k-Mp)$ is coprime with $u$ and is square free. 
\end{enumerate}
\end{thm}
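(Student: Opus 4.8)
The strategy is to run the same machinery as in the proof of Theorem \ref{thm138_36} up to the point where one obtains a decomposition of $\rho(X)$ into a ``bounded part'' plus a multiple of $w = \ord_p(t) = (p-1)/2$, and then exploit the extra structure coming from $p \equiv 3 \bmod 4$ (so that $\Q(\zeta_p)$ has a quadratic subfield, and the orbits of $g \mapsto g^t$ on the $p$-part have a very rigid shape) together with the arithmetic condition (f) to kill the surviving case. First I would set $v = up^d$, assume a proper $ICW_a(v,k^2)$ exists, call it $D$, pick a character $\chi$ of the $u$-part of order $u$, and form $X = \kappa(D) \in \Z[\zeta_u][C_{p^d}]$ with $X\overline{X} = k^2$, exactly as before. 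By condition (e)(i),(ii) together with \Cref{thm:fixesn} and \Cref{cor:fixingmult}, $t$ is a multiplier and we may assume $X^{(t)} = X$, hence $X_g = X_{g^t}$ for all $g \in C_{p^d}$. Applying the trivial character $\rho$ of $C_{p^d}$ and using condition (d) together with Result \ref{turyn} and Result \ref{kronecker} (as in Claim 2 of Theorem \ref{thm138_36}), I get $\rho(X) = k$ after replacing $X$ by a suitable translate.

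Next I would analyse the orbit structure of $g \mapsto g^t$ on $C_{p^d}$. Since $\ord_p(t) = (p-1)/2$, the orbit of the identity has size $1$; each non-identity element of $C_p$ lies in an orbit of size $(p-1)/2$, so $C_p$ contributes exactly two non-trivial orbits of size $(p-1)/2$. For $d > 1$, condition (e)(iv) ($t^{(p-1)/2} \not\equiv 1 \bmod p^2$) forces $\ord_{p^i}(t) = (p-1)p^{i-1}/2$ for $i \geq 2$, so every element of $C_{p^i} \setminus C_{p^{i-1}}$ lies in an orbit of size $(p-1)p^{i-1}/2$; since $D$ is proper, $\rho(X) = k$, the support of $X$ cannot be confined to $C_{p^{d-1}}$, and the sum-of-squares bound $\sum s_i^2 \le$ (weight-type constraint) coming from $X\overline X = k^2$ together with condition (c) will force $d = 1$ (the $d>1$ half of condition (c) is exactly the inequality needed to rule out an orbit of size $(p-1)p/2$ appearing). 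So we reduce to $v = up$, and writing $X_1$ for the coefficient of $1$ in $X$ and $Y_1, Y_2$ for the coefficients of representatives of the two size-$(p-1)/2$ orbits, the relation $\rho(X) = k$ becomes
\begin{equation*}
X_1 + \tfrac{p-1}{2}(Y_1 + Y_2) = k.
\end{equation*}

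Now set $M = Y_1 + Y_2 \in \Z[\zeta_u]$. Writing everything in the integral basis $B$ of Result \ref{integralbasis} and comparing the coefficient of $1$, I get $c_1 + \tfrac{p-1}{2} m = k$ where $c_1$ is the integer coefficient of $1$ in $X_1$ and $m$ the corresponding integer in $M$; since $|c_1| \le 2^{\delta(u)}a$ by Claim 1(b)-type reasoning and $k > 2^{\delta(u)}a$ by (b), $m \neq 0$, and $|k - m\tfrac{p-1}{2}| = |c_1| \le 2^{\delta(u)}a$. The bound $|m| \le 2^{\delta(u)+1}a$ will come from $|X_1| \le k$ (via \cite[Chapter 4, Thm. 2.1]{go}) and the pigeonhole/triangle-inequality estimate $|\tfrac{p-1}{2} M| \le |X_1| + k \le 2k < p\cdot 2^{\delta(u)+1}a$ combined with condition (c) for $d=1$ ($p > 2^{\delta(u)+1}a + 1$). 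Thus $M$ (or rather its basis coefficient $m$) satisfies the hypotheses of condition (f). The final contradiction: one computes $X_1\overline{X_1}$ in two ways. On one hand $X_1 = k - \tfrac{p-1}{2}M$, so $X_1 \overline{X_1} = k^2 - k(p-1)\mathrm{Re}(M) + \tfrac{(p-1)^2}{4}M\overline M$. On the other hand, applying a non-trivial character of $C_p$ that is trivial on $C_u$ to $X$ and using $|\chi(X)|^2 = k^2$, a Gauss-sum computation (the two orbits pair up into the full Gauss sum for $\Q(\zeta_p)/\Q$, whose square is $\pm p$ since $p \equiv 3 \bmod 4$) yields a relation of the form $M\overline M \cdot p = 4k\cdot(\text{something}) - M^2\cdot p + \ldots$, which after taking the basis-$1$ coefficient reduces to the integer $M(4k - Mp)$ being a perfect square times a unit-type factor involving $u$. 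Condition (f) says $M(4k-Mp)$ is squarefree and coprime to $u$, which is incompatible with it being $\pm(\text{integer})^2$ unless it is $\pm 1$, and the remaining tiny cases $M(4k-Mp) = \pm 1$ are excluded by the range $1 \le |M| \le 2^{\delta(u)+1}a$ and $k > 2^{\delta(u)}a$. This contradiction completes the proof.

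The main obstacle I anticipate is pinning down the exact algebraic identity relating $M\overline M$ and $M(4k-Mp)$ that comes from the Gauss-sum evaluation: one must be careful that $p \equiv 3 \bmod 4$ makes the quadratic Gauss sum purely imaginary (so its square is $-p$), track the sign $\epsilon \in \{\pm 1\}$ from the multiplier normalization, and verify that $Y_1 - Y_2$ (the ``difference'' of the two half-orbits) is governed by $\sqrt{-p}$ in such a way that $|Y_1 - Y_2|^2$ enters the weight equation cleanly. Getting the bookkeeping of condition (f) to match — i.e. showing that ``$M(4k-Mp)$ squarefree and coprime to $u$'' is precisely the obstruction to solvability of the resulting norm equation in $\Z[\zeta_u]$ — is where the real content lies; everything before that is a faithful rerun of Theorems \ref{thm158_100} and \ref{thm138_36}.
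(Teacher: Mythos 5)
Your overall strategy is the paper's: pass to $X=\kappa(D)\in\Z[\zeta_u][C_{p^d}]$, use the multiplier $t$ to get $X^{(t)}=X$ and $\rho(X)=k$, decompose $\rho(X)$ over the orbits of $g\mapsto g^t$, set $M=Y_1+Y_2$ for the two half-orbits of $C_p\setminus\{1\}$, and play the quadratic Gaussian periods $\chi(D_{1,2})=\frac{-1\pm\sqrt{-p}}{2}$ against condition (f). Two points, however, would not survive as written. First, the paper does \emph{not} reduce to $d=1$, and your proposed mechanism for doing so fails: an orbit of size $\frac{(p-1)p^{j-1}}{2}$ need not exceed $k^2$, so the weight identity $\sum_g|X_g|^2=k^2$ does not force its coefficient to vanish, and the theorem does not assume the matrix is proper. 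What actually happens is that the higher orbits are harmless for two separate reasons: their contribution to $\rho(X)=k$ is divisible by $\frac{p(p-1)}{2}$, and comparing coefficients in the integral basis of Result \ref{integralbasis} together with the $d>1$ half of condition (c) forces that contribution (and the non-trivial basis components of $Y_0$ and $Y_1+Y_2$) to be zero — this is also what makes $Y_0$ and $M=Y_1+Y_2$ honest \emph{integers}, which you need before condition (f) can even be invoked (you only extract the basis-$1$ coefficient $m$, which is not enough); and their contribution to $\chi(E)$, for $\chi$ of order $p^d$, vanishes because the index-two Gaussian periods at level $p^j$, $j\ge 2$, are zero (imprimitive quadratic Gauss sums vanish). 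Your bound $|M|\le 2^{\delta(u)+1}a$ should likewise come from the fact that each $Y_i$ is a character value of an element with coefficients in $\{0,\pm1,\dots,\pm a\}$ (Result \ref{integralbasis}), not from $|X_1|\le k$.

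Second, the step you yourself flag as ``where the real content lies'' is left undone, and it is the crux of the theorem. It is, once the above is in place, a clean computation: $\chi(E)=\bigl(Y_0-\tfrac{M}{2}\bigr)+\tfrac{\sqrt{-p}}{2}(Y_1-Y_2)$, so $k^2=|\chi(E)|^2$ together with $Y_0=k-\tfrac{p-1}{2}M$ gives exactly $(Y_1-Y_2)^2=M(4k-pM)$ with $Y_1-Y_2\in\Z[\zeta_u]$. Condition (f) then finishes it because a squarefree integer coprime to $u$ is unramified in $\Q(\zeta_u)$ and hence cannot be a square there (your observation that the unit cases $M(4k-pM)=\pm1$ need separate exclusion is a fair one — the paper glosses over it). So the proposal is the right approach in outline, but the $d>1$ reduction is wrong as stated and the central identity, which carries the theorem, is missing.
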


\begin{proof}
	Assume there exists an $ICW_a(up^d,k^2)$, denoted by $E$. 
	Let $\rho$ be the trivial character of $C_{p^d}$. 
	We can show that there exists $X \in \Z[\zeta_u][C_{p^d}]$ such that $X\overline{X}=k^2$, $X^{(t)}=X$ and $\rho(X)=k$. 
	Consider the sequence of orbits $D_0, D_2, \ldots, D_{2d}$ under the map $g \mapsto g^t$ on $C_{p^d}$, sorted in non-decreasing order of the orbit size. 
	Then $\rho(X)$ can be expressed as $$\rho(X)=\sum_{i=0}^{2d} Y_i|D_i|$$
	where $Y_i \in \Z[\zeta_{u}]$. 
	Since $\rho(X)=k$, by comparing the coefficients on both side of the equations in their basis representations, and using conditions (b) and (c), one can show that
	\begin{equation}\label{eq:case190_2}
		k=Y_0+w(Y_1+Y_2)
	\end{equation}
	where $Y_0$ and $Y_1+Y_2$ are integers. 
	Furthermore, $Y_1 + Y_2$ satisfies the conditions of $M$ in (f). Hence, we let $M=Y_1+Y_2$. 
	Let $\chi$ be a character of $C_{p^d}$ of order $p^d$. 
	Using the evaluation of Gauss sums (see \cite[Thm. 1, p. 75]{IrelandRosen}), 
	we can deduce that $\chi(D_1)=\frac{-1+\sqrt{-p}}{2}$, 
	$\chi(D_2) = \frac{-1-\sqrt{-p}}{2}$ and $\chi(D_i) = 0$ for all $i \geq 3$. 
	Then
	$$\chi(E) = Y_0 + Y_1\left(\frac{-1+\sqrt{-p}}{2}\right) + Y_2\left( \frac{-1-\sqrt{-p}}{2}\right).$$
	A direct computation shows that 
	$$\chi(E)\overline{\chi(E)} = \left(c_{1,0}-\frac{1}{2}M \right)^2+\frac{p}{4}(Y_1 - Y_2)^2.$$
	Substituting $\chi(E)\overline{\chi(E)}= k^2$, we deduce 
	$$(Y_1 - Y_2)^2 = \frac{4}{p}\left(k^2 - \left(c_{1,0}-\frac{1}{2}M\right)^2\right).$$
	In view of (\ref{eq:case190_2}), we obtain 
	$$(Y_1 - Y_2)^2 = M(4k - pM).$$ 
	Since $M(4k - pM)$ is coprime with $u$ and is square free by assumption, we conclude that $M(4k - pM)$ is unramified over $\Q[\zeta_u]$. 
	In other words, $M(4k - pM)$ is not a square but $(Y_1 - Y_2)^2$ is a square. 
	Hence, they cannot be equal, a contradiction. 
\end{proof}

In the following, we present some applications of Theorem \ref{thm190_100} to rule out some previously open cases. In particular, we settle the cases $CW(112, 64)$, $CW(133,100)$, $CW(184,81)$ and $CW(190, 100)$ which were previously open in Strassler's table. 

\begin{cor}
	$CW(v, k^2)$ do not exist for the following $v$ and $k$.
	\begin{enumerate}
		\item When $k = 8$, $v = 2^cp$, $c \geq 1$ and $p = 7, 19$.
		\item When $k = 9$, $v = 2^b3^cp^d$, $b \in \{0, 1\}$, $c \geq 1$, $d \geq 1$ and $p = 23, 47, 59, 71, 83$. 
		\item When $k = 10$, $v = 2^b5^cp^d$, $b \in \{0, 1\}$, $c \geq 1$, $d \geq 1$ and $p = 19, 23, 47, 59, 79, 83$.
		\item When $k = 10$, $v = 7^bp^d$, $b \in \{0, 1\}$, $d \geq 1$, and $p = 19, 23$. 
	\end{enumerate}
\end{cor}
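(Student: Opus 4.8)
The plan is to obtain all four families from Theorem~\ref{thm190_100}. Suppose $D$ is a $CW(v,k^{2})$ with $(v,k)$ taken from one of the four items, let $p$ be the prime displayed in that item, and let $p^{d}$ be the exact power of $p$ dividing $v$. In case~(1) I would apply Theorem~\ref{thm190_100} directly to $D$, taking $u=2^{c}$, $a=1$, $d=1$. In cases~(2) and~(3), if the factor $2^{b}$ is actually present ($b=1$) I would first replace $D$ by its natural projection onto $C_{v/2^{b}}$; in case~(4), likewise with $7^{b}$ in place of $2^{b}$. By the definition of an integer circulant weighing matrix the resulting object is an $ICW_{a}(up^{d},k^{2})$ of the same weight, with $a=2^{b}\in\{1,2\}$ (resp.\ $a=7^{b}\in\{1,7\}$) and with $u=3^{c}$ in~(2), $u=5^{c}$ in~(3), $u=1$ in~(4). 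It then suffices to verify hypotheses~(a)--(f) of Theorem~\ref{thm190_100} for these parameters.

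Conditions~(a)--(c) are purely numerical --- they reduce to $k>2^{\delta(u)}a$ together with $p>2^{\delta(u)+1}a+1$, or, when $d>1$, the slightly larger bound in~(c) --- and hold case by case because $p$ is large relative to $k$ and $a$. Condition~(d), self-conjugacy of $k$ modulo $u$, holds automatically: whenever $u$ is a power of a prime that also divides $k$ its $k$-free part is $1$, and the only remaining instance, namely $2$ being self-conjugate modulo $5^{c}$, follows from $2$ being a primitive root modulo $5^{c}$. For condition~(e) I would choose $t$ with $t\equiv 1\bmod u$ and $\ord_{p}(t)=(p-1)/2$ such that $\sigma_{t}$ fixes every prime ideal of $k$ in $\Z[\zeta_{up^{d}}]$; as $\sigma_{t}$ is trivial on $\Q(\zeta_{u})$, this constrains $t\bmod p^{d}$ to an intersection of decomposition groups of the primes dividing $k$, and one checks that this intersection still contains an element of order $(p-1)/2$. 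This is where quadratic reciprocity enters: for every prime $q\mid k$ and every $p$ in the four lists one has $\ord_{p}(q)\in\{(p-1)/2,\,p-1\}$, so each of these decomposition groups is the full group or its unique index-$2$ subgroup, and any intersection of such groups is again cyclic of order $(p-1)/2$ or $p-1$. When $d>1$ the mild additional requirement~(e)(iv) is arranged by adjusting $t$ modulo $p^{2}$.

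Condition~(f) is the heart of the argument, and I expect it to be the main obstacle. For the chosen parameters one enumerates the integers $M$ with $1\le|M|\le 2^{\delta(u)+1}a$ and $|k-M(p-1)/2|\le 2^{\delta(u)}a$; for the larger primes in the lists no such $M$ exists, so~(f) is vacuous, while for the smaller primes only $M=1$ survives (with one exception, below), and one checks that $M(4k-Mp)$ is square free and coprime to $u$. Then $M(4k-Mp)$ is a non-square in $\Q(\zeta_{u})$, contradicting the identity $(Y_{1}-Y_{2})^{2}=M(4k-Mp)$ derived in the proof of Theorem~\ref{thm190_100}. The genuine exception is case~(1) with $p=7$, $k=8$: there $M=2$ is also admissible, but $M(4k-Mp)=36$ is a perfect square, so Theorem~\ref{thm190_100} does not apply verbatim. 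For that sub-case I would finish by hand: the surviving data force $Y_{1}+Y_{2}=2$ and $(Y_{1}-Y_{2})^{2}=36$, hence $Y_{1},Y_{2}\in\Z$ and $\{Y_{1},Y_{2}\}=\{4,-2\}$; but each orbit coefficient $Y_{i}$, being a rational integer here, satisfies $|Y_{i}|\le 2^{\delta(u)}a=2$ by Result~\ref{integralbasis}, which excludes $Y_{1}=4$. Finally, for the small exponents in case~(1) with $2^{c}p<64$ the matrix cannot exist for the trivial reason that its weight $k^{2}=64$ exceeds its order.
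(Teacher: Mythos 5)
Your proposal follows exactly the route the paper intends: the corollary is stated as a direct application of Theorem~\ref{thm190_100} (after projecting away the small coprime factor $2^b$ resp.\ $7^b$ to land in the $ICW_a(up^d,k^2)$ setting), and your case-by-case verification of hypotheses (a)--(f) is correct, including the observation that for the larger primes condition~(f) is vacuous because no admissible $M$ exists. The one substantive point where you go beyond the paper is item~(1) with $p=7$, $k=8$: there $M=2$ is admissible and $M(4k-Mp)=36$ is a perfect square, so hypothesis~(f) of Theorem~\ref{thm190_100} genuinely fails and the theorem does not apply verbatim; your patch (from $Y_1+Y_2=2$ and $(Y_1-Y_2)^2=36$ one gets $\{Y_1,Y_2\}=\{4,-2\}$, contradicting the coefficient bound $|Y_i|\le 2^{\delta(u)}a=2$ from Result~\ref{integralbasis}) is valid and is needed to make the claimed nonexistence of $CW(112,64)$ rigorous. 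The remaining small caveats (checking $p\equiv 3\bmod 4$, and arranging (e)(iv) when $d>1$) are routine and consistent with what the paper leaves implicit.
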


\begin{cor} \label{cor:abelian4}
	Let $G$ be an abelian group of order $2^{c+1}11^d$ and exponent  $2^c11^d$ where $d \geq 1$ and $0 \leq c \leq 2$.
	Then there is no $G$-invariant $IW(2^{c+1}11^d , 36)$.
\end{cor}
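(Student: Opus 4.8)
The plan is to pass to a cyclic quotient and apply \Cref{thm190_100}, with the arithmetic of $3$ modulo powers of $2$ and of $11$ carrying the argument.

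\textbf{Reduction to a circulant.} First, $c=0$ is vacuous: an abelian group of order $2\cdot 11^d$ has exponent divisible by $2$. So $c\in\{1,2\}$; the Sylow $2$-subgroup of $G$ has order $2^{c+1}$ and exponent $2^c$, hence equals $C_{2^c}\times C_2$, and $G\cong C_{2^c}\times C_2\times C_{11^d}$. Take $N$ to be the extra direct factor $C_2$; then $G/N\cong C_{2^c}\times C_{11^d}\cong C_{2^c11^d}$ is cyclic, and the natural projection $\Z[G]\to\Z[G/N]$ carries a putative $G$-invariant $IW(2^{c+1}11^d,36)$ to an $ICW_2(2^c11^d,36)$ (a ring homomorphism preserves $DD^{(-1)}=36$, and the fibres have size $|N|=2$). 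So it suffices to exclude an $ICW_2(2^c11^d,36)$.

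\textbf{Main step.} Apply \Cref{thm190_100} with $u=2^c$, $p=11$, $a=2$, $k=6$ (so $k^2=36$ and $p\equiv3\bmod4$). The routine hypotheses hold: $\gcd(2^c,11)=\gcd(11,6)=1$; $6>4=2^{\delta(2^c)}a$; and condition (c) reads $11>\tfrac{2(6+4)}{10}+8=10$ if $d>1$ and $11>9$ if $d=1$. Condition (d) is exactly where $c\le2$ is used: $3$ is self-conjugate modulo $2^c$ iff $3^j\equiv-1\pmod{2^c}$ is solvable, which holds for $c\le2$ (indeed $3\equiv-1\bmod4$) but not for $c\ge3$; hence $6$ is self-conjugate modulo $2^c$. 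For (e), take $t\equiv 3^2=9\pmod{2^c11^d}$: then $t\equiv1\pmod{2^c}$, $\ord_{11}(t)=5=\tfrac{p-1}{2}$, and $\sigma_t$ fixes every prime ideal over $6$ in $\Z[\zeta_{2^c11^d}]$ — over $3$ since $t$ is a power of $3$, and over $2$ automatically, because $2$ is a primitive root modulo $11^d$, so the decomposition group of a prime over $2$ is all of the Galois group. Running the proof of \Cref{thm190_100} then produces an integer $M$ with $1\le|M|\le 2^{\delta(2^c)+1}a=8$ and $|6-5M|\le2^{\delta(2^c)}a=4$, hence $M\in\{1,2\}$, satisfying $(Y_1-Y_2)^2=M(4k-pM)=M(24-11M)$ in $\Z[\zeta_{2^c}]$. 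For $M=1$ this is $(Y_1-Y_2)^2=13$, which is impossible since $13$ is squarefree and coprime to $2^c$, so $\sqrt{13}\notin\Q(\zeta_{2^c})$.

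\textbf{The obstacle, and how I would clear it.} Two residual cases remain and contain the real difficulty: when $M=2$ we obtain only $(Y_1-Y_2)^2=4$, which is solvable, so hypothesis (f) of \Cref{thm190_100} genuinely fails; and when $d>1$ the coincidence $3^5\equiv1\pmod{121}$ forces $t^{(p-1)/2}\equiv1\pmod{p^2}$ for every admissible $t$, so hypothesis (e)(iv) fails. For $d>1$ I would translate (using \Cref{thm:fixesn} and \Cref{cor:fixingmult}) so that $X:=\kappa(\bar E)\in\Z[\zeta_{2^c}][C_{11^d}]$ satisfies $X\overline X=36$ and $X^{(t)}=X$, where $\bar E$ is the $ICW_2$ and $\kappa$ applies a character of order $2^c$ to the $C_{2^c}$-factor (as in the proof of \Cref{thm138_36}); since the orbits of $g\mapsto g^t$ outside $C_{121}$ have size $\ge\ord_{11^3}(3)=55>36\ge|\supp(X)|$, the element $X$ is supported on $C_{121}$, reducing everything to the $d\le2$ situation, after which the same support bound leaves only the identity together with boundedly many $5$-element orbits — a finite case check. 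For $M=2$, unwinding the proof of \Cref{thm190_100} pins down $\kappa(\bar E)$ (up to translation and sign) as $-4+2D_1$ with $D_1$ a Gauss period of $C_{11}$; tracing this rigid form back through $\bar E$ and using that $E$ has entries in $\{0,\pm1\}$ — so that, for instance, a column sum equal to $\pm4$ along the $C_{2^c}$-direction forces the corresponding entries of $\bar E$ to all equal $\pm2$, hence $\kappa(\bar E)$ to vanish there, contradicting the value $-4$ — one excludes this case too. The laborious part is disposing of all the surviving coefficient patterns in the $M=2$ and $d=2$ sub-cases by such integrality arguments; that is the step I expect to be genuinely delicate.
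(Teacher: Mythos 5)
Your reduction (kill the extra $C_2$ factor, obtaining an $ICW_2(2^c11^d,36)$, and feed it to \Cref{thm190_100} with $u=2^c$, $p=11$, $a=2$, $k=6$) is exactly the route the paper intends, and your bookkeeping of the hypotheses is correct. In particular you have put your finger on something real: hypothesis (f) fails because $M=2$ satisfies $|6-5M|\le 4$ and gives $M(4k-Mp)=2(24-22)=4$, which is neither squarefree nor coprime to $u$; and for $d>1$ hypothesis (e)(iv) can never be met, since every admissible $t$ is a power of $3$ modulo $11^d$ and $3^5=243\equiv 1\pmod{121}$. The paper offers no supplementary argument for \Cref{cor:abelian4} (contrast \Cref{cor184_81}, where a failed hypothesis is explicitly patched), so the corollary is not a verbatim consequence of \Cref{thm190_100}; your diagnosis of this is the most valuable part of the proposal.

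The gap is that your patch for the surviving cases is not a proof, and the $M=2$ branch is not a fringe case: it already occurs for $d=1$, i.e.\ for the two groups $[2,2,11]$ and $[2,4,11]$ that Table \ref{table4} actually cites. There $M=2$ forces $\{Y_0,Y_1,Y_2\}=\{-4,2,0\}$ up to swapping $Y_1,Y_2$, and $X=-4+2D_1$ with $D_1$ the sum of the quadratic residues genuinely satisfies $|\chi(X)|^2=36$ for every character of $C_{11}$, so the contradiction must come from lifting $X$ back through $\kappa$ and the projection to the original $\{0,\pm1\}$-matrix. Your sketched integrality argument for this does not work as stated: for $c=1$ the value $\kappa(\bar E)_1=-4$ is realized by $\bar E_1=-2$, $\bar E_h=+2$, which does not make $\kappa(\bar E)$ ``vanish there'' (that would require equal signs), so no contradiction is reached. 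Likewise the $d>1$ branch is reduced to ``a finite case check'' that is never performed. Until the $M=2$ lifting analysis and the $d\ge 2$ support analysis are actually carried out, the corollary is not established by this proposal.
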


We can further modify the conditions in Theorem \ref{thm190_100} to rule out the following infinite families of circulant weighing matrices, which includes the open case $CW(184,81)$. 

\begin{cor}\label{cor184_81}
	There is no $CW(2^c23^d, 81)$ for any $c \geq 0$ and $d \geq 1$. 
\end{cor}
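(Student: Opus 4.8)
The plan is to apply Theorem \ref{thm190_100} with $u = 1$ (so $\delta(u) = 0$, $\varphi(u) = 1$, and the self-conjugacy and $B$-related hypotheses become vacuous) and $a = 1$, since a $CW(2^c23^d, 81)$ is in particular an $ICW_1(2^c23^d, 9^2)$. Here $k = 9 = 3^2$ and the ``prime'' $p$ in the theorem should be taken to be $23$, which is $\equiv 3 \bmod 4$ as required. The coprimality condition (a) holds since $\gcd(2^c, 23) = \gcd(23, 9) = 1$. Condition (b) reads $9 > 2^{0}\cdot 1 = 1$, which holds. For condition (c): when $d = 1$ we need $23 > 2^{1}\cdot 1 + 1 = 3$, true; when $d > 1$ we need $23 > \tfrac{2(9 + 1)}{22} + 2\cdot 1 = \tfrac{20}{22} + 2 < 3$, also true. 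So the arithmetic side conditions are easily verified; the real content is producing the multiplier $t$ and checking (f).

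For condition (e) I would choose an integer $t$ with $t \equiv 1 \bmod{2^c}$, $\ord_{23}(t) = \tfrac{23-1}{2} = 11$, and $t^{11} \not\equiv 1 \bmod{23^2}$ when $d > 1$. Such $t$ exists by the Chinese Remainder Theorem: pick $t_0$ a generator of the index-$2$ subgroup of $(\Z/23^2)^*$ (equivalently $t_0$ a square that is not an $11$th power mod $23^2$), which automatically satisfies the lifting-the-exponent condition $t_0^{11}\not\equiv 1\bmod{23^2}$, then glue with $t \equiv 1 \bmod{2^c}$; since $\gcd(2^c, 23) = 1$ this is consistent, and one checks $\ord_{23^d}(t) = 11$ for all $d\ge 1$. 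Because $\sigma_t$ with $t\equiv 1\bmod{2^c}$ acts trivially on $\Q(\zeta_{2^c})$ and fixes the unique prime of $\Q(\zeta_{23^d})$ above $23$ (as $t$ is a square mod $23$, it lies in the decomposition group), condition (e)(i) holds; this should be spelled out using that $3$ is a quadratic residue mod $23$ is \emph{not} needed — rather $23$ ramifies and $\sigma_t$ fixes the ramified prime automatically, while the primes above $3$ are fixed because they lie over a prime of $\Q(\zeta_{2^c})$ which $\sigma_t$ fixes and $\ord_{23^d}(t)$ controls the residue extension; I would state this as a short lemma-style computation.

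The main obstacle — and the step deserving the most care — is condition (f): for every integer $M$ with $1 \le |M| \le 2^{0+1}\cdot 1 = 2$ and $|9 - M\cdot 11| \le 2^{0}\cdot 1 = 1$, one must verify that $M(4\cdot 9 - 23M) = M(36 - 23M)$ is squarefree (the coprimality-with-$u$ part is automatic since $u=1$). The constraint $|9 - 11M| \le 1$ forces $11M \in \{8, 9, 10\}$, which has \emph{no} integer solution $M$ — so the set of relevant $M$ is empty and condition (f) is vacuously true. (If one instead allows $a$ to absorb the factor $2^b$ with $b\in\{0,1\}$, i.e. works with $a=2$, then $M$ ranges over $|M|\le 4$ with $|9-11M|\le 2$, still forcing $11M\in\{7,\dots,11\}$, giving only $M = 1$: then $M(36-23M) = 13$, which is squarefree — so the argument survives that variant too.) With all conditions of Theorem \ref{thm190_100} met, we conclude no $ICW_1(2^c23^d, 81)$ exists, hence no $CW(2^c23^d, 81)$ exists, for any $c \ge 0$ and $d \ge 1$. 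I would close by remarking that the case $c = 0$ (so $u$ could also be taken as... ) needs no separate treatment since $u=1$ already covers it, and that $(c,d)=(3,1)$ recovers the previously open case $CW(184, 81)$.
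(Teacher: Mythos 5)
There is a genuine gap: your choice $u=1$ makes Theorem \ref{thm190_100} apply to the wrong group. The theorem rules out $ICW_a(up^d,k^2)$, i.e.\ integer weighing matrices over the cyclic group of order $up^d$; with $u=1$ and $p=23$ its conclusion is the non-existence of $ICW_1(23^d,81)$, which is only the case $c=0$ of the corollary. A $CW(2^c23^d,81)$ with $c\ge 1$ lives in $\Z[C_{2^c23^d}]$, and to reach it you must take $u=2^c$ so that $up^d$ equals the group order. (Projecting onto $C_{23^d}$ instead would give an $ICW_{2^c}(23^d,81)$, i.e.\ $a=2^c$, and then conditions (b) and (c) of the theorem fail for all but the smallest values of $c$; your parenthetical with $a=2$ only covers $c\le 1$.) So as written your argument does not touch $c\ge 1$, and in particular does not recover $CW(184,81)$.

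Once you set $u=2^c$, the real obstacle appears: condition (d) requires $9$ to be self-conjugate modulo $2^c$, and since the powers of $3$ modulo $8$ are only $1$ and $3$, this fails for every $c\ge 3$. This is exactly the point the paper's proof addresses. It chooses $t\equiv 3^{\ord_{2^c}(3)}$, observes that every hypothesis of Theorem \ref{thm190_100} except (d) holds, and then supplies the conclusion that self-conjugacy was needed for --- namely $\rho(X)=\pm 9$ --- by a separate argument: field descent puts $\rho(X)$ into $\Z[\zeta_8]$, Corollary \ref{cassels2} bounds its coordinates $d_x$ by $9$, Lemma \ref{lem:thm184_36} forces either $|d_1|=9$ with the rest zero or $2<|d_x|<9$ for all $x$, and the congruence $d_x\equiv c_x\pmod{11}$ with $|c_x|\le 2$ eliminates the second alternative. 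Your condition (f) computation (vacuously true for $u=1$, $a=1$) is fine as far as it goes, but you would need to redo it, together with (b), (c), (e), for $u=2^c$, and above all you need a substitute for (d); without that, the proof only covers $c\in\{0,1,2\}$.
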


\begin{proof}
	Let $t$ be an integer such that $t \equiv 3^{{\rm ord}_{2^c}(3)} \bmod {2^c23^d}$. Then all the conditions in \Cref{thm190_100} are satisfied except that $9$ is not self-conjugate modulo $2^c$.
	Let $\rho$ and $X$ be as defined in Theorem \ref{thm190_100}. 
	If we can show that $\rho(X) = 9$ up to multiplication with a root of unity, then the assertion follows from Theorem \ref{thm190_100}. 
	
	\medskip
	
	Let $B$, $c_{x}$, $d_x$ and $M_x$ be as defined in the proof of Theorem \ref{thm184_36}. 
	We see that for each $x \in B$, 
	\begin{equation} \label{eq: case184_81}
	c_{x} + 11M_x = d_x.
	\end{equation}
	Note that by Corollary \ref{cassels2}, $|d_x| \leq 9$ for all $x \in B$.
	By \Cref{lem:thm184_36}, we have either $|d_1| = 9$ and $|d_x| = 0$ for all $x \neq 1$ or $2 < |d_x| < 9$ for all $x \in B$.
	By definition, $|c_{x}| \leq 2$ for all $x \in B$. 
	In view of (\ref{eq: case184_81}), we deduce that $|d_1|=9$ and $d_x = 0$ for all $x \neq 1$. 
	That is, $\rho(X) = \pm 9$. 
\end{proof}

In the following, we use a multiplier thorem to find all solutions of $ICW_2(77,100)$, and subsequently rule out the open case $CW(154, 100)$ from Strassler's table. 
\begin{thm} \label{CW77}
	Let $E = ICW_2(77,100)$. 
	Let $\alpha$ and $\beta$ be elements in $C_{77}$ of order $7$ and $11$, respectively.
	Then 
	$$E = \pm(2D_1(-\Delta+1)+D_2(\Delta-1))^{\sigma}g $$
	where 
	$D_1 = \beta + \beta^3 + \beta^4 + \beta^5 + \beta^9$, 
	$D_2 = \beta^2 + \beta^6 + \beta^7 + \beta^8 + \beta^{10},$
	$\Delta = \alpha + \alpha^2 + \alpha^4$, and 
	$\sigma$ is an automorphism of $C_{77}$ and $g \in C_{77}$.
\end{thm}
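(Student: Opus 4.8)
The plan is to extract a multiplier from McFarland's theorem, use it to pin $E$ down to a $\Z$-combination of nine explicit group-ring elements, cut that list with two coordinate projections, and finish with character evaluations in a biquadratic field. Since $\gcd(77,100)=1$, $100=2^2\cdot 5^2$, and $4\equiv 2^2\equiv 5^{8}\pmod{77}$, Result \ref{McFarland} shows that $4$ is a multiplier of $E$ and that, after replacing $E$ by a translate, $E^{(4)}=E$. Write $C_{77}=\langle\alpha\rangle\times\langle\beta\rangle$, put $\overline\Delta=\alpha^3+\alpha^5+\alpha^6=\Delta^{(-1)}$, and note $D_2=D_1^{(-1)}$; thus $\Delta,\overline\Delta$ are the sums over the quadratic residues and the non-residues in $\langle\alpha\rangle$, and $D_1,D_2$ those in $\langle\beta\rangle$. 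Since $\langle 4\rangle$ modulo $7$ and modulo $11$ is the subgroup of quadratic residues, and $\ord_7(4)=3$, $\ord_{11}(4)=5$ are coprime, the orbits of $x\mapsto x^4$ on $C_{77}$ are: the point $1$; the residue and non-residue sets in each of $\langle\alpha\rangle\setminus\{1\}$ and $\langle\beta\rangle\setminus\{1\}$; and four orbits of size $15$ on the elements of order $77$, whose group-ring sums are $\Delta D_1,\Delta D_2,\overline\Delta D_1,\overline\Delta D_2$. Hence $E^{(4)}=E$ forces $E=a+b\Delta+c\overline\Delta+dD_1+eD_2+f\Delta D_1+g\Delta D_2+h\overline\Delta D_1+i\overline\Delta D_2$ with all nine coefficients in $\{0,\pm1,\pm2\}$. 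I also record the identities, valid since $7,11\equiv 3\bmod 4$ and coming from quadratic Gauss sums: $\Delta+\overline\Delta=C_7-1$, $\Delta\overline\Delta=C_7+2$, $\Delta^2=\Delta+2\overline\Delta$ and its conjugate, and $D_1+D_2=C_{11}-1$, $D_1D_2=2C_{11}+3$, $D_1^2=2D_1+3D_2$.

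Next I apply the two natural epimorphisms. Killing $\langle\beta\rangle$ turns $EE^{(-1)}=100$ into $\pi_7(E)\pi_7(E)^{(-1)}=100$, where $\pi_7(E)=R+P\Delta+Q\overline\Delta$ for integers $R,P,Q$; expanding with the identities above gives $R^2+3(P^2+Q^2)=100$ together with $R(P+Q)+(P^2+Q^2)+3PQ=0$, whose only solutions are $\pi_7(E)\in\{\pm 10,\ \pm5(\Delta-1),\ \pm5(\overline\Delta-1)\}$. Symmetrically $\pi_{11}(E)\in\{\pm 10,\ \pm(4D_1-2D_2),\ \pm(2D_1-4D_2)\}$. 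Replacing $E$ by $-E$ one may assume $\chi_0(E)=10$ (since $\chi_0(E)^2=100$), which retains only the options with trivial-character value $10$; and applying a suitable automorphism $\sigma_t$ — with $t$ chosen by the Chinese Remainder Theorem so that $\sigma_t$ acts independently on the $7$- and $11$-parts, which is legitimate because $\sigma_t$ commutes with $x\mapsto x^4$ and therefore preserves $E^{(4)}=E$ — one may further assume $\pi_7(E)\in\{10,\ 5(\Delta-1)\}$ and $\pi_{11}(E)\in\{10,\ 4D_1-2D_2\}$.

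Each admissible pair $(\pi_7(E),\pi_{11}(E))$ then determines the nine coefficients up to a single integer parameter $h$ (with a short sub-case split in the branch $\pi_7(E)=5(\Delta-1)$). The branches containing $10$ are eliminated by playing the two projection constraints against each other — for instance $\pi_{11}(E)=10$ forces $d=e=0$, hence $R=a\in\{0,\pm1,\pm2\}$, contradicting $R\in\{\pm5,\pm10\}$ — and, where that is not immediate (the case $\pi_7(E)=10$, which forces $E=(C_{11}-1)+(f\Delta+h\overline\Delta)(D_1-D_2)$ with $f+h=1$), by evaluating a character $\lambda$ of order $77$: then $\lambda(E)$ lies in the biquadratic field $\Q(\sqrt{-7},\sqrt{-11})$, and demanding that the algebraic integer $\lambda(E)\overline{\lambda(E)}$ equal the rational integer $100$ kills the surviving parameter (here it forces $f=h$, impossible). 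In the one remaining branch one is left with $E=(2D_1-D_2)(\Delta-1)+h(\overline\Delta-\Delta)(D_1-D_2)$, and the same order-$77$ evaluation forces $h=0$. That $E=(2D_1-D_2)(\Delta-1)$ really is an $ICW_2(77,100)$ is then immediate: its coefficients lie in $\{0,\pm1,\pm2\}$, and by the identities above $EE^{(-1)}=(2D_1-D_2)(2D_2-D_1)(\Delta-1)(\overline\Delta-1)=25\cdot 4=100$. Undoing the normalizations (the translate, the automorphism $\sigma_t$, and the sign), and using $2D_1(-\Delta+1)+D_2(\Delta-1)=-(2D_1-D_2)(\Delta-1)$, gives exactly the asserted description.

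The step I expect to demand the most care is this last one: organizing the finite case analysis so that the two projection constraints and the order-$77$ character together dispose of every branch, and in particular exploiting that $\lambda(E)\overline{\lambda(E)}$ must be a rational integer (not merely an algebraic integer) in $\Q(\sqrt{-7},\sqrt{-11})$, while honestly tracking the sign, automorphism and translate ambiguities that are meant to survive into the final statement.
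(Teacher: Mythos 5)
Your proposal is correct, and it takes a genuinely different route from the paper. The paper passes to the cyclotomic group ring $\Z[\zeta_7][C_{11}]$ via $\kappa(\alpha)=\zeta_7$, uses $9$ as a multiplier (justified through the generalized multiplier theorem), exploits self-conjugacy of $5$ modulo $7$ via Turyn's result, solves the modulus equations for characters of order $11$ over $\Z[\zeta_7]$, and then lifts back to $\Z[C_{77}]$ by describing ${\rm Ker}(\kappa)$ with the Lam--Leung theorem and killing the kernel contribution by weight counting in the projection to $C_{11}$. You instead stay entirely in $\Z[C_{77}]$: the classical McFarland multiplier $4$ (valid since $4\equiv 2^2\equiv 5^8\bmod{77}$) reduces $E$ to nine integer unknowns on the orbits of $x\mapsto x^4$, the two projections combined with the quadratic Gauss sum identities give Diophantine equations whose solutions I have checked do reduce to exactly the lists you state (via $(R+3s)^2=100$ and $(R'+5s)^2=100$), and the order-$77$ character in $\Q(\sqrt{-7},\sqrt{-11})$ finishes each surviving branch by forcing the coefficient of $\sqrt{77}$ in $\lambda(E)\overline{\lambda(E)}$ to vanish. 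I verified that this rationality constraint does eliminate both sub-cases in the branch $\pi_7(E)=5(\Delta-1)$, not only the one you display (the other, with $d=1$, $e=-2$, forces $f=1/3$), so make sure that sub-case is written out. Your approach is more elementary and self-contained — no cyclotomic coefficients, no vanishing-sums lemma, and the solution space appears explicitly as a short finite list — while the paper's one-prime-at-a-time template via $\kappa$ is the method reused throughout the paper and applies when no single convenient integer multiplier is available.
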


\begin{proof}
	Define a homomorphism $\kappa: \Z[C_{77}] \to \Z[\zeta_7][C_{11}]$, such that 
	$\kappa(\alpha)= \zeta_7$ and $\kappa(\beta) = \beta$.
	Let $X = \kappa(E)$. 
	It is straightforward to show that $9$ is a multiplier and hence we can assume 
	$X^{(9)} = X.$
	\medskip 
	Consequently, we can write 
	\begin{equation} 
	X = \sum _{i \in \{0,1\} \atop j \in \{0, 1, 2\}} m_{i,j} \gamma_i D_j, \qquad -4 \leq m_{i,j} \leq 4 \mbox{ for all } i, j,
	\end{equation}
	where 
	$\gamma_0 = \zeta_7 + \zeta_7^2 + \zeta_7^4, $
	$\gamma_1 = \zeta_7^3 + \zeta_7^5 + \zeta_7^6,$ and 
	$D_0 = 1$.
	
	\medskip
	
	Let $\rho$ be the trivial character of $C_{11}$. Note that 
	$\rho(X) = \gamma_0(m_{0,0} + 5m_{0,1} + 5m_{0,2}) + \gamma_1(m_{1,0} + 5m_{1,1} + 5m_{1,2}) \in \Z[\zeta_7]$, and 
	\begin{equation}\label{eq: case77}
	\rho(X)\overline{\rho(X)}=100.
	\end{equation}
	Since $5$ is self-conjugate modulo $7$, by Result \ref{turyn}, we have $\rho(X) \equiv 0 \bmod 5$ and hence, 
	$$\gamma_0 m_{0,0} + \gamma_1 m_{1,0} \equiv 0 \bmod{5}.$$ 
	Since $m_{0,0}$ and $m_{1,0}$ are restricted to $\{0, \pm 1, \ldots, \pm 4\}$, we must have $m_{0,0} = m_{1,0} = 0$.
	Solving equation (\ref{eq: case77}), we get 
	\begin{equation}\label{eq:case77b}
	\left( m_{0,1}+m_{0,2}, m_{1,1}+m_{1,2}\right)\in \left \{\pm(1,2), \pm(2,1), \pm(2,2)\right\}.
	\end{equation}
	
	\medskip
	
	Let $\chi$ be a character of $C_{11}$ of order $11$ such that $\chi(D_1) = \frac{-1+\sqrt{-11}}{2}$ and 
	$\chi(D_2) = \frac{-1-\sqrt{-11}}{2}$.
	Then, 
	\begin{equation*}
	\chi(X) = \pm Y + \frac{\sqrt{-11}}{2}\left(\gamma_0(m_{0,1}-m_{0,2}) + \gamma_1(m_{1,1}-m_{1,2})\right) 
	\end{equation*} 
	where $Y \in \left \{\frac{\gamma_0}{2}+1, \frac{\gamma_1}{2} +1, 1 \right \}.$
	Using the solutions in (\ref{eq:case77b}) to solve $\chi(X)\overline{\chi(X)} = 100$, we have $X$ is equivalent to one of the following:  
	$$X \in \left \{ 4D_1-2D_2, 2D_1(-\gamma_0+1) + D_2(\gamma_0-1) \right \}.$$
	Using \cite[Thm. 2.2]{LamLeung}, we can deduce that 
	${\rm Ker}(\kappa) = \left\{\langle \alpha \rangle Z| Z \in \Z[C_{11}] \right \}.$
	
	\medskip
	
	Suppose $X = 4D_1-2D_2$. Then $E = 4D_1 - 2D_2 + \langle \alpha \rangle Z$ for some $Z \in \Z[C_{11}]$.
	Since the coefficients of $E$ lie between $-2$ and $2$, we can rewrite $E$ as follows:
	$$E = (4-2\langle \alpha \rangle)D_1 - 2D_2 + \langle \alpha \rangle Z',$$ 
	where $Z' \in \Z[C_{11}]$ such that ${\rm supp}(Z') \subset D_2 \cup \{1\}.$
	Let $\eta$ be the natural homomorphism from $C_{77}$ to $C_{11}$. 
	We compute $W = \eta(E) = -10D_1-2D_2+7Z'$. Since $WW^{(-1)} = 100$, 
	$$100 = \sum_{i=0}^{10} (\mbox{coeff. of } \beta^i \mbox{ in } W)^2 \geq \sum_{h \in {\rm supp}(D_1)} (\mbox{coeff. of } h \mbox{ in } W)^2 = 500.$$
	A contradiction. 
	So, $X = 2D_1(-\gamma_0+1) + D_2(\gamma_0-1)$ and hence 
	$$E = 2D_1(-\Delta+1) + D_2(\Delta-1) + \langle \alpha \rangle \sum_{i=0}^{10}b_i\beta^i $$
	where $b_i$ are integers. 
	Again, since the coefficients of $E$ lie between $-2$ and $2$, we deduce that 
	\begin{equation}\label{eq: case77c}
	b_0 \in \{0, \pm 1, \pm 2 \}, b_1 = b_3 = b_4 = b_5 = b_9 = 0, \mbox{ and } b_i \in \{0, \pm 1 \} \mbox{ for all } i \in S
	\end{equation} 
	where $S = \{2, 6, 7, 8, 10\}.$
	We compute $W = \eta(E) = -4D_1 + 2D_2 + 7\left(\sum_{i \in \{0\} \cup S} b_i \beta^i\right).$
	Again, since $WW^{(-1)}= 100$, we get 
	\begin{eqnarray*}
		100 &=& \sum_{i=0}^{10} (\mbox{coeff. of } \beta^i \mbox{ in } W)^2\\
		&=& 80 + \sum_{i \in \{0\} \cup S} (\mbox{coeff. of } \beta^i \mbox{ in } W)^2
	\end{eqnarray*}
	This gives
	\begin{eqnarray*}
		20 &=& \sum_{i \in \{0\} \cup S} (\mbox{coeff. of } \beta^i \mbox{ in } W)^2 \\
		&=& (7b_0)^2 + \sum_{i \in S} (7b_i +2)^2.
	\end{eqnarray*}
	By (\ref{eq: case77c}), we get $b_i = 0$ for all $i \in \{0\} \cup S$.
	Thus, $$E = 2D_1(-\Delta+1) + D_2(\Delta-1).$$
\end{proof}

Using Theorem \ref{CW77}, we rule out the existence of $CW(154,100)$. 
This can be proved by showing that the $ICW_2(77,100)$ can not be lifted to give $CW(154,100)$. We omit the proof here. The reader may refer to \cite[Thm.  3.3.19]{MingMingTanThesis}.

\begin{thm}\label{thm154_100}
	There is no $CW(154,100)$.
\end{thm}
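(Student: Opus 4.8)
The plan is to suppose, for contradiction, that a $CW(154,100)$ exists, call it $F$. Since $154 = 2 \cdot 7 \cdot 11$, the group is $C_{154} \cong C_2 \times C_{77}$. I would first use the natural projection $\pi: C_{154} \to C_{77}$: the image $\pi(F)$ is an integer circulant weighing matrix of weight $100$ with coefficients bounded in absolute value by $2$ (since at most two elements of $C_{154}$ collapse to a given element of $C_{77}$). Thus $\pi(F)$ is an $ICW_2(77,100)$, and by \Cref{CW77} we know exactly what it is up to equivalence: replacing $F$ by a suitable translate $\pm F^{(s)}g$, we may assume $\pi(F) = 2D_1(-\Delta+1) + D_2(\Delta-1)$ with $D_1, D_2, \Delta$ as in the statement of \Cref{CW77}.

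Next I would analyze how $F$ sits over $\pi(F)$. Write $C_{154} = \langle x \rangle \times C_{77}$ where $x$ has order $2$. Every element of $C_{154}$ is $x^{\epsilon}h$ with $\epsilon \in \{0,1\}$, $h \in C_{77}$, and $F = \sum_{h} (a_h + b_h x) h$ after grouping, where $a_h, b_h \in \{0, \pm 1\}$ are the coefficients of $h$ and $xh$ respectively. Then $\pi(F)$ has coefficient $a_h + b_h$ at $h$. Comparing with the known form of $\pi(F)$, the coefficient $a_h + b_h$ equals $2$ on $\mathrm{supp}(D_1)$, equals $-2$ on part of $\mathrm{supp}(D_2)$-related support, equals $-1$ on certain $\alpha$-orbit pieces, and so on — in particular the value $\pm 2$ forces $a_h = b_h = \pm 1$ at those positions, which pins down a large chunk of $F$. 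The weight constraint $\sum_h (a_h^2 + b_h^2) = 100$ together with $\sum_h (a_h+b_h)^2 = 100$ gives $\sum_h a_h b_h = 0$; combined with the many positions where $a_h b_h = 1$ is forced, this severely limits the remaining freedom. I would also bring in the other quotient $C_{154} \to C_{14}$ or characters of order $2$, $14$, $22$, $154$ to get further equations; the character $\psi$ of order $2$ that is trivial on $C_{77}$ gives $\psi(F) = \sum_h (a_h - b_h) h$ evaluated at the trivial character of $C_{77}$, i.e. $\sum_h (a_h - b_h) = \pm 10$, while $\sum_h(a_h+b_h) = \pm 10$ as well.

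The cleanest route is probably to use a multiplier/character argument directly on $F$ in $\Z[C_{154}]$: $5$ is self-conjugate modulo $14$ and modulo $22$, so Turyn's result (\Cref{turyn}) applied through characters of $C_{154}$ of $2$-power-times-$7$ and $2$-power-times-$11$ order will force divisibility conditions, and one can check whether the rigidly determined candidate $F$ actually satisfies $FF^{(-1)} = 100$ for any choice of the sign data on the remaining free coefficients (those supported near $D_2 \cup \{1\}$ in the $C_{77}$ picture). Since \Cref{CW77} already showed the "base" $ICW_2(77,100)$ is essentially unique and rigid, the lifting problem has only finitely many — in fact very few — candidates, and a short case check on the residual coefficients, using $\sum a_h b_h = 0$ and the order-$154$ character equation $|\chi(F)|^2 = 100$, should produce a contradiction in every case.

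I expect the main obstacle to be the bookkeeping in the lifting step: translating the rigid description of $\pi(F)$ from \Cref{CW77} into precise constraints on the pairs $(a_h, b_h)$ across all $77$ cosets, and then showing that no assignment of the handful of remaining $\pm 1$ choices (on the support coming from $D_2$ and the identity) is consistent with $FF^{(-1)} = 100$. This is exactly the computation the authors say they omit and refer to \cite[Thm.\ 3.3.19]{MingMingTanThesis}; the conceptual content is entirely in \Cref{CW77}, and the rest is a finite, if slightly tedious, verification that the unique integer solution downstairs does not lift.
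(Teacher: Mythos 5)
Your strategy is exactly the one the paper (and the underlying thesis) uses: project $F$ along $C_{154}\to C_{77}$, observe that $\pi(F)$ is an $ICW_2(77,100)$, invoke \Cref{CW77} to pin $\pi(F)$ down to the single equivalence class $2D_1(-\Delta+1)+D_2(\Delta-1)$, and then show that this rigid element does not lift. Your setup of the lifting problem is correct: writing $F=\sum_h(a_h+b_hx)h$, the positions where $\pi(F)=\pm2$ force $a_h=b_h=\pm1$, the positions where $\pi(F)=\pm1$ force $\{a_h,b_h\}=\{0,\pm1\}$, and the identity $\sum_h(a_h+b_h)^2=\sum_h(a_h^2+b_h^2)=100$ gives $\sum_h a_hb_h=0$, hence exactly $20$ of the $37$ positions outside $\supp(\pi(F))$ must carry a cancelling pair $(\pm1,\mp1)$.

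The gap is that these constraints, together with the order-$2$ character equation $\sum_h(a_h-b_h)=\pm10$, are \emph{not} yet contradictory: the counting closes consistently ($20\cdot 2+20\cdot 1+20\cdot 2=100$), and a large family of sign assignments survives them. So the "short case check" you defer to is the entire remaining content of the proof; it requires evaluating $F$ at characters of $C_{154}$ of order divisible by $7$ and $11$ (where the Weil-number rigidity of \Cref{CW77} and self-conjugacy of $5$ actually bite) and ruling out each surviving configuration. You have correctly identified which tools to bring in, but you have not exhibited the contradiction, and your intermediate constraints alone do not imply one. To be fair, the paper itself omits precisely this verification and refers to the thesis, so your proposal reproduces the paper's published argument at the same level of detail; as a self-contained proof, however, the decisive step is missing.
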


\subsection{Weil numbers} \label{sub:weil}
In this subsection, we utilize rational idempotents, algebraic number theory, and computer programs such as Magma and PARI to rule out the existence of $CW(60, 36)$, $CW(120, 36)$ and $CW(155,36)$.
We exclude most of the details in the proof. 
The reader may refer to \cite[Section 3.3.3]{MingMingTanThesis} for the details.

\begin{thm} \label{thm:weil60valid}
	Let $S = CW(60,36)$ and $\chi$ be a character of $C_{60}$ of order $60$.
	Then $\chi(S)$ is equivalent to $\theta_1$ or $\theta_2$ as defined in the following:
	\begin{eqnarray*}
		\theta_1&=&-3\zeta_{60}^{15} - 2\zeta_{60}^{14} + \zeta_{60}^{11} + 2\zeta_{60}^{10} + \zeta_{60}^9 +
		4\zeta_{60}^8 + 2\zeta_{60}^6 + 5\zeta_{60}^5 + 2\zeta_{60}^4 \\ 
		&&-4\zeta_{60}^2 - \zeta_{60} - 4,\\
		\theta_2&=&-\zeta_{60}^{15} + \zeta_{60}^{14} - 3\zeta_{60}^{11} - \zeta_{60}^{10} - 3\zeta_{60}^9 - 
		2\zeta_{60}^8 - \zeta_{60}^6 + 5\zeta_{60}^5 - \zeta_{60}^4 \\
		& & + 2\zeta_{60}^2 + 3\zeta_{60}.
	\end{eqnarray*}    
\end{thm}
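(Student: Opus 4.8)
The plan is to regard $\chi(S)$ as a cyclotomic integer of prescribed absolute value and to enumerate all possibilities up to the equivalence of the theorem (Galois action together with multiplication by roots of unity, which is exactly what translates $\pm\zeta_{60}^{e}gS$ and the operations $S\mapsto S^{(t)}$ do to $\chi(S)$). By Result \ref{character_GW}, $\psi(S)\overline{\psi(S)}=36$ for \emph{every} character $\psi$ of $C_{60}$; applying this to $\psi=\chi^{t}$ for $t$ coprime to $60$ gives $\chi(S)^{\sigma_t}\overline{\chi(S)^{\sigma_t}}=36$, so all Galois conjugates of $\chi(S)$ have absolute value $6$. Thus $\chi(S)$ is a Weil number in $\Z[\zeta_{60}]$ of absolute value $6$. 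Note that the $F$-bound is useless here, since one checks $F(60,36)=60$; the finer ideal-theoretic analysis below is what cuts the problem down.

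First I would factor $(6)$ in $\Z[\zeta_{60}]$. The rational prime $2$ ramifies (through the $\Q(\zeta_4)$-part) with $e=2$ and residue degree $\ord_{15}(2)=4$, so $(2)=(\mathfrak p_1\mathfrak p_2)^2$ with two primes $\mathfrak p_1,\mathfrak p_2$; likewise $(3)=(\mathfrak q_1\mathfrak q_2)^2$. A computation of the decomposition groups inside $(\Z/60\Z)^{*}$ shows that complex conjugation interchanges $\mathfrak p_1\leftrightarrow\mathfrak p_2$ and $\mathfrak q_1\leftrightarrow\mathfrak q_2$. Writing $(\chi(S))=\mathfrak p_1^{a_1}\mathfrak p_2^{a_2}\mathfrak q_1^{b_1}\mathfrak q_2^{b_2}$, the identity $(\chi(S))\overline{(\chi(S))}=(36)$ forces $a_1+a_2=b_1+b_2=2$, leaving only finitely many candidate ideals; the ``balanced'' choice $(a_1,a_2)=(b_1,b_2)=(1,1)$ gives $(\chi(S))=(6)$, hence $\chi(S)=\pm6\zeta_{60}^{\,j}$ by Result \ref{kronecker}, a single monomial. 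For each candidate ideal $\mathfrak a$ I would use Magma/PARI to check that $\Q(\zeta_{60})$ has class number $1$, so that $\mathfrak a=(\alpha)$ is principal; $\alpha$ is then determined up to a unit. Since any two Weil-number generators of $\mathfrak a$ differ by a unit of absolute value $1$, hence by a root of unity (Result \ref{kronecker} applied to their ratio), one only has to adjust $\alpha$ by roots of unity and, if needed, by a unit of the real subfield $\Q(\zeta_{60})^{+}$ to arrange $\alpha\overline\alpha=36$ exactly; the obstruction to this lies in the finite cokernel of the norm map on units, which the computer evaluates. This produces an explicit finite list of candidates for $\chi(S)$ up to equivalence.

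Finally I would impose the genuine weighing-matrix conditions to reduce the list to $\{\theta_1,\theta_2\}$. Via the rational idempotent decomposition $\Z[C_{60}]\hookrightarrow\bigoplus_{d\mid 60}\Z[\zeta_d]$, the matrix $S$ is recovered from the tuple $(\chi_d(S))_{d\mid 60}$ subject to integrality (``gluing'') congruences among the components; in addition each $\chi_d(S)$ must satisfy $\chi_d(S)\overline{\chi_d(S)}=36$ and lie in the analogously restricted list for conductor $d$, the subgroup intersection numbers of $S$ must be non-negative integers of the right magnitude, and the first row of $S$, reconstructed by the inversion formula (Result \ref{inversion}), must have all entries in $\{0,\pm1\}$ with exactly $36$ of them nonzero. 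Running the finite list of candidate top components $\chi_{60}(S)$ through these constraints in Magma eliminates every class except those of $\theta_1$ and $\theta_2$; in particular the monomial case $\pm6\zeta_{60}^{\,j}$ fails to glue to a $\{0,\pm1\}$ first row. I expect the main obstacle to be making the whole search provably finite and small enough to carry out — controlling the class-group and unit contributions in the Weil-number enumeration and handling the combinatorial gluing — which is why the detailed verification is deferred to \cite[Section 3.3.3]{MingMingTanThesis}.
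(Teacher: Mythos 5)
Your first half — treating $\chi(S)$ as a Weil number of absolute value $6$ in $\Z[\zeta_{60}]$ and enumerating all such elements up to Galois action and roots of unity by computer — is exactly what the paper does (it likewise just cites "computer assistance" for the fact that there are five equivalence classes: $\theta_1$, $\theta_2$, and three further classes divisible by $2$ or by $3$). Your sketch of how that computation works (ideal factorization, class number $1$, adjusting generators by units) is reasonable, though note a slip: since $(2)=(\mathfrak p_1\mathfrak p_2)^2$ and $(3)=(\mathfrak q_1\mathfrak q_2)^2$, the equation $(\chi(S))\overline{(\chi(S))}=(36)$ forces $a_1+a_2=b_1+b_2=4$, not $2$; the "balanced" case giving $(\chi(S))=(6)$ is $(2,2)$, $(2,2)$.

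The genuine gap is in your second half. The actual content of the paper's proof is the elimination of the three remaining classes, and it is done by a short structural argument, not by the gluing computation you describe: if $\chi(S)$ is divisible by $3$, Result \ref{Ma} gives $S=3X_1+C_3X_2$, so the $\{0,\pm1\}$ coefficients of $S$ are constant on $C_3$-cosets, whence $S=C_3Z$ and every character nontrivial on $C_3$ kills $S$, contradicting $|\tau(S)|^2=36$; if $\chi(S)$ is divisible by $2$, Result \ref{Ma} together with \cite[Lem.~6.2]{SchmidtSmith} produces a $CW(30,9)$, which is known not to exist. Your replacement for this step — "running the finite list of candidate top components through these constraints in Magma eliminates every class except $\theta_1$ and $\theta_2$" — is an assertion, not an argument: to carry it out you would first have to classify the Weil numbers of absolute value $6$ in $\Z[\zeta_d]$ for every divisor $d$ of $60$ and then search all compatible tuples, which is precisely the (much larger) computation the paper reserves for Theorem \ref{thm60_36}, where it proves the stronger statement that no $CW(60,36)$ exists at all. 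As written, your proposal therefore leaves the decisive step unproved; supplying either the Ma's-lemma/$CW(30,9)$ argument or the full gluing search (with its own finiteness justification for each conductor $d$) is needed to close it.
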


\begin{proof}
	Note that $\chi(S)$ satisfies $\chi(S)\overline{\chi(S)}=36$. 
	Using computer assistance, we obtain all Weil number solutions $X \in \Z[\zeta_{60}]$ to the modulus equation $X\overline{X} = 36$. We have $X$ is equivalent to one of the following: 
	$\theta_1$, $\theta_2$, $2(\zeta_{60}^{15} - 2\zeta_{60}^{11} - 2\zeta_{60}^9 + 2\zeta_{60} + 2)$, $3(\zeta_{60}^{14} +\zeta_{60}^{10} + 2\zeta_{60}^8 + \zeta_{60}^6 + \zeta_{60}^4- 2\zeta_{60}^2 - 2)$ or $6$.
	So, if $X$ is not equivalent to $\theta_1$ or $\theta_2$, then $X$ is either divisble by $2$ or is divisible by $3$.

\medskip

	Suppose $\chi(S)$ divisible by $3$.
	By Result \ref{Ma}, 
	$$S = 3X_1 + C_3X_2$$
	for some $X_1, X_2$ in $\Z[C_{60}]$. 
	But this means that the coefficients of $S$ are constant modulo $3$ on each coset of $C_3$. Since $S$ has coefficients $0, \pm 1$ only, this shows that, in fact, that the coefficients of $S$ are constant on each coset of $C_3$. Thus $S = C_3Z$ for some $Z \in \Z[C_{60}]$. But $\tau(S)=0$ for all character $\tau$ of $C_{60}$ which is trivial on $C_3$. This contradicts $S \overline{S}=36$. Hence, $\chi(S)$ is not divisible by $3$. 
	
\medskip

	Now, suppose $\chi(S)$ is divisible by $2$.
	By Result \ref{Ma}, 
	$$S = 2X_1 + C_2X_2$$
	for some $X_1, X_2$ in $\Z[C_{60}]$.
	Let $g$ be the element of order $2$ in $C_{60}$ and let $\rho : C_{60} \to C_{60}/\langle g \rangle$ be the natural epimorphism.
	By \cite[Lem. 6.2]{SchmidtSmith}, 
	$$S = (1-g)X+(1+g)Y$$ with $X, Y$ in $\Z[C_{60}]$ and $\rho(Y)$ is a $CW(30,9)$.
	But $CW(30, 9)$ does not exist, see \cite{AngArasuet}. Hence, $\chi(S)$ is not divisible by $2$. 

	\end{proof}

\begin{thm}\label{thm60_36}
	There is no $CW(60, 36)$.
\end{thm}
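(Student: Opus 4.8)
The plan is to leverage \Cref{thm:weil60valid}, which already reduces the problem to showing that neither $\theta_1$ nor $\theta_2$ can arise as $\chi(S)$ for a genuine $CW(60,36)$. Let $S$ be a hypothetical $CW(60,36)$ and let $\chi$ be a character of $C_{60}$ of order $60$. By \Cref{thm:weil60valid}, after replacing $S$ by an equivalent weighing matrix we may assume $\chi(S)$ equals $\theta_1$ or $\theta_2$ (up to a Galois automorphism and a root of unity, which corresponds to replacing $S$ by an equivalent circulant weighing matrix via $S \mapsto \pm S^{(t)}g$). The remaining task is to propagate this constraint on a single character value to a contradiction using the inversion formula and the values of $S$ on the other characters of $C_{60}$.

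First I would set up the rational idempotent decomposition of $\Z[C_{60}]$: writing $v=60$, the group algebra $\Q[C_{60}]$ splits as a product of fields $\Q(\zeta_d)$ over divisors $d \mid 60$, and the coefficients of $S$ are recovered from the collection $\{\chi_d(S)\}$ where $\chi_d$ runs over one character of each order $d$. The key point is that $\chi(S)=\theta_i$ determines $\chi'(S)$ for every character $\chi'$ of order $60$ (they are Galois conjugates $\theta_i^{\sigma_t}$), so the ``order-$60$ part'' of $S$ is completely pinned down. What remains free are the contributions from characters of order $d$ for proper divisors $d\mid 60$; these come from the induced weighing matrices on the quotients $C_{60}/C_{60/d}$, i.e. from $CW(d,9)$-type or smaller-weight data on the subquotients. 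Then I would run the inversion formula (\Cref{inversion}): each coefficient $a_g$ of $S$ is $\frac{1}{60}\sum_{\chi} \chi(Sg^{-1})$, and the order-$60$ part of this sum is a fixed cyclotomic-integer combination coming from $\theta_i$, while the rest is controlled by the lower-order pieces.

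The cleanest route to the contradiction is likely a projection argument mirroring the end of the proof of \Cref{thm:weil60valid}. Concretely, consider the natural projections of $S$ onto $C_{60}/C_2 \cong C_{30}$, onto $C_{60}/C_3\cong C_{20}$, onto $C_{60}/C_5\cong C_{12}$, and onto $C_{60}/C_4$, $C_{60}/C_6$, etc. Each projected element is an integer weighing matrix whose character values are exactly the corresponding subset of $\{\chi_d(S)\}$. From $\chi(S)=\theta_i$ one reads off $\chi^{2}(S)$, $\chi^{3}(S)$, $\chi^{4}(S)$, $\chi^{5}(S)$, $\chi^{6}(S)$ (automorphism images and, for divisor characters, images under the norm-type maps), and then matches these against the known non-existence or structure results for $CW(30,9)$ (ruled out in \cite{AngArasuet}, already invoked above), $CW(20,9)$, $CW(12,9)$, and $CW(4,9)$, using \Cref{Ma} and \cite[Lem. 6.2]{SchmidtSmith} exactly as in the divisibility-by-$2$ step of \Cref{thm:weil60valid}. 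Since $36=6^2$ and $9=3^2$, the self-conjugacy of $2$ and $3$ modulo the relevant moduli lets \Cref{turyn} and \Cref{Ma} bite; the expectation is that $\theta_1$ forces a divisibility or a projected-weight inequality (as in the $\sum d_k^2 = 500 > 100$ style estimate) that is impossible, and likewise for $\theta_2$.

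The main obstacle I anticipate is the bookkeeping: unlike the previous theorem, here the ``bad'' Weil numbers $\theta_1,\theta_2$ are not divisible by $2$ or $3$, so the easy \Cref{Ma}/\Cref{SchmidtSmith} dichotomy does not apply directly to $S$ itself. One must instead combine the rigidity of the order-$60$ part with the available slack in the lower-order parts and show that no assignment of the remaining character values yields coefficients in $\{0,\pm1\}$ with the right weight — effectively a finite but delicate search, almost certainly done with computer assistance (Magma/PARI) as the authors indicate for this subsection. I would therefore phrase the final argument as: reduce to $\theta_1$ or $\theta_2$ by \Cref{thm:weil60valid}; lift each to the full group ring via rational idempotents and the inversion formula; observe that the induced matrices on $C_{30}$, $C_{20}$, $C_{12}$ must be weighing matrices of weight $9$, at least one of which (the $C_{30}$ one) does not exist; conclude no $CW(60,36)$ exists. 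The honest statement is that the verification that $\theta_1,\theta_2$ cannot be completed is computational, and the paper rightly defers the details to \cite[Section 3.3.3]{MingMingTanThesis}.
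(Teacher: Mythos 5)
Your opening move is the same as the paper's: invoke \Cref{thm:weil60valid} to pin the order-$60$ character value down to $\theta_1$ or $\theta_2$, then combine this with the Weil numbers at the other character orders via a rational idempotent decomposition and a finite, computer-assisted check. That much matches. But the concrete mechanism you propose for the final contradiction is wrong. You claim that "the induced matrices on $C_{30}$, $C_{20}$, $C_{12}$ must be weighing matrices of weight $9$, at least one of which (the $C_{30}$ one) does not exist." Projection onto a quotient preserves the weight: if $S$ is a $CW(60,36)$ and $\rho:C_{60}\to C_{30}$ is the natural map, then $\rho(S)\rho(S)^{(-1)}=\rho(36)=36$, so $\rho(S)$ is an $ICW_2(30,36)$, not a $CW(30,9)$. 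The $CW(30,9)$ connection arises only through the Schmidt--Smith splitting $S=(1-g)X+(1+g)Y$ with $\rho(Y)$ a $CW(30,9)$, which is available precisely when $\chi(S)\equiv 0\bmod 2$ --- and that is exactly the case already eliminated in \Cref{thm:weil60valid}, as you yourself note two sentences earlier. So your concluding argument is closed off by your own (correct) observation that $\theta_1,\theta_2$ are not divisible by $2$ or $3$, and what remains of the proposal is "a finite but delicate search" with no stated criterion for what the search checks.

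The paper's actual obstruction is an integrality condition you do not identify. Writing $U=\rho(S)$ for the projection to $C_{12}$, one has $S=\frac{1}{5}C_5U+\frac{1}{5}(5-C_5)V$, where $V$ is an explicit rational-idempotent combination of elements $\alpha_w$ determined (up to equivalence) by the Weil numbers $X_w=\chi_w(\alpha_w)$ for $w\in\{5,10,15,20,30,60\}$. Since $(5-C_5)V=5S-C_5U$ must lie in $\Z[C_{60}]$, and the finitely many admissible tuples $(X_5,\dots,X_{60})$ (with $X_{60}\in\{\theta_1,\theta_2\}$, $X_5,X_{10}$ forced to $6$ by self-conjugacy, and the rest enumerated by computer) all yield a non-integral $(5-C_5)V$, one gets the contradiction. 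Your proposal would need to be repaired by replacing the weight-$9$ projection step with this integrality test (or some equivalent coefficient-consistency check); as written, it does not reach a contradiction.
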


\begin{proof}
	Suppose $S$ is a $CW(60,36)$. 
	Let $U =\rho(S)$ where $\rho$ is the natural homomorphism from $C_{60}$ to $C_{12}$. 
	Then $S$ can be expressed as 
	\begin{equation}\label{eq:60eq}
	S = \frac{1}{5}C_5U + \frac{1}{5}(5-C_5)V,
	\end{equation}
	where 
	\begin{multline*}
	V = \frac{\alpha_5}{12}C_{12} + \frac{\alpha_{10}}{12}(2C_2-C_4)C_3+ \frac{\alpha_{15}}{12}C_4(3-C_3) + \frac{\alpha_{20}}{6}(2-C_2)(C_3) \\ + \frac{\alpha_{30}}{12}(2C_2-C_4)(3-C_3) + 
	\frac{\alpha_{60}}{6}(2-C_2)(3-C_3)
	\end{multline*} 
	with $\alpha_w \in \Z[C_w]$. 
	Let $\chi_w$ be a character of $C_{60}$ of order $w$ and let $X_w=\chi_w(\alpha_w)$. 
	We have $X_w \in \Z[\zeta_w]$ and $X_w \overline{X_w} =36$. 
	Suppose we know all such solutions $X_w$, we can determine $\alpha_w$ and hence enumerate all possibilities of $V$ in (\ref{eq:60eq}). 
	In the following, we shall characterize all solutions of $X_w$ up to equivalence. 
	From Theorem \ref{thm:weil60valid}, $X_{60}$ is equivalent to $\theta_1$ or $\theta_2$.
	With computer assistance, we deduce that 
	$X_{30}$ is equivalent to $3(1 - 2\zeta_{30}-\zeta_{30}^2+\zeta_{30}^3-2\zeta_{30}^4+\zeta_{30}^5-\zeta_{30}^7)$ or $6$, 
	$X_{20}$ is equivalent to $2(2-2\zeta_{20}^3+\zeta_{20}^5-2\zeta_{20}^7)$ or $6$, and 
	$X_{15}$ is equivalent to $3(1 - 2\zeta_{15}-\zeta_{15}^2+\zeta_{15}^3-2\zeta_{15}^4+\zeta_{15}^5-\zeta_{15}^7)$ or $6$. 
	Since $6$ is self-conjugate modulo  to both $5$ and $10$, Result \ref{turyn} 
	implies that $X_5$ and $X_{10}$ are equivalent to $6$. 
	
	\medskip
	
	In view of (\ref{eq:60eq}), we have $(5-C_5)V$ is in $\Z[C_{60}]$. However, by direct analysis of all possible combinations of $X_w$ in $V$, we find that $(5-C_5)V$ is not integral. Hence, $S$ can not be $CW(60,36)$.
	\end{proof}
	
	We remark that a different argument using intensive computer searches was presented in \cite{DjokovicKotsireas}, which yields the same conclusion.
	
	\medskip
	
	The argument in Theorem \ref{thm60_36} can be extended to get all $ICW_2(60, 36)$. 
	With computer assistance, we classified all $ICW_2(60, 36)$ up to equivalence. 
	We also found all solutions of $X\overline{X} = 36$ for $X \in \Z[\zeta_{120}]$ using computer. There are $24$ of these solutions, up to equivalence. We use these $ICW_2(60, 36)$ and Weil numbers solutions to show that no $CW(120, 36)$ exists. The reader may refer to  \cite[Thm. 3.3.26]{MingMingTanThesis} for the details.
	
\begin{thm} \label{thm120_36}
There is no $CW(120, 36)$. 
\end{thm}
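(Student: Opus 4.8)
The plan is to follow the strategy of Theorem \ref{thm60_36}, carried out one layer deeper and with more computer assistance. Suppose, for contradiction, that $S \in \Z[C_{120}]$ is a $CW(120,36)$. Let $\rho : C_{120} \to C_{60}$ be the natural epimorphism, whose kernel has order $2$. Since $\rho$ is a ring homomorphism, $\rho(S)\rho(S)^{(-1)} = 36$, and each coefficient of $\rho(S)$ is a sum of two coefficients of $S$, hence lies in $\{0,\pm 1,\pm 2\}$; thus $\rho(S)$ is an $ICW_2(60,36)$. The first ingredient is therefore a complete classification, up to equivalence, of $ICW_2(60,36)$. This is obtained exactly as in Theorems \ref{thm:weil60valid} and \ref{thm60_36}: one enumerates with a computer all Weil number solutions $X\overline{X}=36$ in $\Z[\zeta_w]$ for every $w \mid 60$, uses Result \ref{turyn} and Result \ref{Ma} to discard the components that would force divisibility by $2$ or $3$, and then assembles the surviving data through the rational idempotent decomposition of $\Q[C_{60}]$ into an explicit finite list.

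Next I would decompose $S$ itself using the rational idempotents $e_w$ of $\Q[C_{120}]$ indexed by the divisors $w \mid 120$. For $w \mid 60$, every character of order $w$ factors through $\rho$, so the component $Se_w$ is already pinned down by $\rho(S)$ and hence by the list produced above. The only genuinely new components are those attached to $w \in \{8, 24, 40, 120\}$; for each such $w$ and a character $\chi_w$ of $C_{120}$ of order $w$, the value $X_w := \chi_w(S)$ lies in $\Z[\zeta_w]$ and satisfies $X_w\overline{X_w} = 36$, and the solutions of this modulus equation have been enumerated by computer (there are $24$ equivalence classes for $w = 120$, and similarly finitely many for $w \in \{8,24,40\}$). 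Each choice of an $ICW_2(60,36)$ from the classification together with a choice of $(X_8, X_{24}, X_{40}, X_{120})$, taken up to the equivalence $D \mapsto \pm D^{(t)}g$ and up to Galois conjugation on the $\Z[\zeta_w]$, determines a candidate $S \in \Q[C_{120}]$ via the Inversion Formula (Result \ref{inversion}).

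Finally, I would impose the two conditions that a genuine weighing matrix must meet: $S \in \Z[C_{120}]$ and every coefficient of $S$ lies in $\{0,\pm 1\}$. The integrality requirement relating the mod-$2$ quotient datum $\rho(S)$ to the order-$120$ character datum $X_{120}$ is exactly the obstruction that was fatal in Theorem \ref{thm60_36} (``$(5-C_5)V$ is not integral''), and here it again eliminates all but finitely many combinations; for the few survivors a direct computation in Magma or PARI checks $SS^{(-1)} = 36$ and the coefficient bound, and in every case one of these fails, giving the contradiction. I expect the main obstacle to be organizational rather than computational: one must fix consistent representatives for all the equivalences (translations, the maps $(t)$, and the Galois actions on each cyclotomic layer) so that reassembling the local data over $C_{120}$ is unambiguous and the resulting search is provably finite and feasible. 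Getting this bookkeeping right — so that the integrality condition can actually be tested combinatorially — is the delicate part; the individual number-theoretic and computational steps are, by contrast, routine given the tools already developed in this section.
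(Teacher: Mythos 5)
Your plan coincides with the paper's own (sketched) argument: the paper likewise reduces to a computer classification of all $ICW_2(60,36)$ up to equivalence via the projection $C_{120}\to C_{60}$, enumerates the $24$ equivalence classes of solutions of $X\overline{X}=36$ in $\Z[\zeta_{120}]$, and reassembles these local data to show no integral $\{0,\pm1\}$-solution exists, deferring the detailed bookkeeping to the author's thesis. Your outline is correct and fills in essentially the same intermediate structure (the $ICW_2(60,36)$ classification, the new character layers $w\in\{8,24,40,120\}$, and the integrality/coefficient test), so it is the same approach.
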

	
\begin{thm} \label{thm155_36}
There is no $CW(155, 36)$.
\end{thm}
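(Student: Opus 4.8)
The plan is to run the rational-idempotent and Weil-number machinery already used for Theorems~\ref{thm60_36} and~\ref{thm120_36}, after first extracting as much structure as possible from a multiplier so that the eventual computer search is small. Write $v=155=5\cdot 31$ and suppose a $CW(155,36)$, say $D$, exists; note $\gcd(155,36)=1$. Since $36=2^2 3^2$ and $4\equiv 2^2\equiv 3^{18}\pmod{155}$, Result~\ref{McFarland} shows that $4$ is a multiplier of $D$, so after replacing $D$ by a suitable translate we may assume $D^{(4)}=D$. As $\ord_{155}(4)=10$, the element $D$ is constant on the orbits of $g\mapsto g^4$ on $C_{155}$, and these orbits have sizes $1$ (the identity), $2$ (the four elements of order $5$, forming $2$ orbits), $5$ (the thirty elements of order $31$, forming $6$ orbits) and $10$ (the $120$ elements of order $155$, forming $12$ orbits). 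Since $D$ has exactly $36$ nonzero coefficients, its support is a union of such orbits whose sizes add up to $36$; reducing this modulo $5$ forces the identity into the support and rules out every orbit of size $2$, so the support is the identity together with $a_{31}$ orbits of order-$31$ elements and $a_{155}$ orbits of order-$155$ elements with $a_{31}+2a_{155}=7$ and $a_{155}\in\{1,2,3\}$.

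Next I would exploit the order-$5$ character $\chi_5$. Since $2^2\equiv 3^2\equiv -1\pmod 5$, the integer $6$ is self-conjugate modulo $5$, so Result~\ref{turyn} gives $\chi_5(D)\equiv 0\pmod 6$ and then Result~\ref{kronecker} forces $\chi_5(D)=\pm 6\zeta_5^{j}$; as $\sigma_4$ is complex conjugation on $\Q(\zeta_5)$ and $D^{(4)}=D$, the value $\chi_5(D)$ is real, hence $\chi_5(D)=\pm 6$. Projecting $D$ to $C_5$ and reading off coefficients in the integral basis $\{1,\zeta_5,\zeta_5^2,\zeta_5^3\}$ pins this projection down to $\pm 6$ times the identity; tracking how the order-$155$ orbits lie over the cosets of $C_{31}$ then forces, class by class over $C_5$, an even number of order-$155$ orbits with coefficients summing to zero. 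Hence $a_{155}=2$, $a_{31}=3$, and the two order-$155$ orbits in the support carry opposite coefficients. Running the same bookkeeping for the projection $\rho:C_{155}\to C_{31}$, together with $\rho(D)\rho(D)^{(-1)}=36$, I would deduce that $\rho(D)$ has identity-coefficient $\pm 1$, is constant on the six length-$5$ orbits of $g\mapsto g^4$ on $C_{31}$, and that among these six orbit-coefficients exactly three equal $\pm 1$, exactly one equals $\pm 2$, and two vanish.

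Now the Weil numbers enter. Since $3^{15}\equiv -1\pmod{31}$, the prime $3$ is self-conjugate modulo $31$, so Result~\ref{turyn} gives $\chi_{31}(\rho(D))=3Y$ with $Y\in\Z[\zeta_{31}]$ and $Y\overline Y=4$; the shape of $\rho(D)$ just obtained rules out the trivial solution $Y\sim 2$, since that would give $\chi_{31}(\rho(D))=\pm 6$ and hence $\rho(D)=\pm 6$ times the identity, contradicting that its identity-coefficient is $\pm 1$. Using Magma or PARI one then enumerates, up to equivalence, all $X\in\Z[\zeta_{31}]$ with $X\overline X=4$ and all $X$ with $X\overline X=36$ in the degree-$12$ subfield of $\Q(\zeta_{155})$ fixed by $\sigma_4$ (the field in which $\chi_{155}(D)$ must lie); for each admissible combination one reconstructs the candidate $\rho(D)$, and through the rational idempotents of $\Q[C_{155}]$ the candidate $D$, and checks it against the structural constraints above together with $\chi(D)\overline{\chi(D)}=36$ for the characters of orders $31$ and $155$. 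No combination survives, so $CW(155,36)$ does not exist.

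The main obstacle is this last, computational, step. The field-descent method is of no help here, as $F(155,36)=155$ and the conductor does not drop; and because $\Q(\zeta_{31})$ has class number $9$ (together with a large unit group) the solutions of $X\overline X=4$ need not be merely roots of unity times $2$, so actually enumerating them — and likewise the exotic solutions in the degree-$12$ subfield of $\Q(\zeta_{155})$ — genuinely requires the class-group and unit data that only a computer algebra system provides. The arithmetic preliminaries (the multiplier $4$, the orbit count, and the $\chi_5$ and $\chi_{31}$ reductions) serve precisely to trim the search to a size the computation can finish; without them the $\Q(\zeta_{155})$ enumeration alone would be the bottleneck.
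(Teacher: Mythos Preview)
Your route is quite different from the paper's and considerably more elaborate. The paper never introduces the multiplier $4$, never analyses orbits, and never reconstructs $D$ through rational idempotents. Instead it observes that $2$ is self-conjugate modulo $5$, so $\chi_5(D)\equiv 0\pmod 2$; a computer check shows that the \emph{only} solution of $X\overline X=36$ in $\Z[\zeta_{31}]$ up to equivalence is $6$, so $\chi_{31}(D)\equiv 0\pmod 2$; and a second computer check shows that both inequivalent solutions in $\Z[\zeta_{155}]$ are divisible by $2$, so $\chi_{155}(D)\equiv 0\pmod 2$. Since the trivial character also gives $\pm 6$, every character value is even, and the Inversion Formula (Result~\ref{inversion}) then forces every coefficient of $D$ to be even --- impossible for a $\{0,\pm1\}$ matrix.

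Your multiplier and orbit bookkeeping is essentially correct (the deduction that no order-$5$ elements occur, that $a_{155}$ is even, and that the identity-coefficient of $\rho(D)$ is $\pm1$ all check out), but it is machinery you do not need here. In particular, your anticipated difficulty --- that the class number $9$ of $\Q(\zeta_{31})$ might produce exotic solutions of $Y\overline Y=4$ --- does not materialise: the computation returns only $Y\sim 2$. At that point your own structural constraint (identity-coefficient $\pm1$ in $\rho(D)$, incompatible with $\rho(D)=\pm6$) already yields the contradiction, and the entire $\Q(\zeta_{155})$ enumeration you propose is superfluous. Also, your finer claim about $\rho(D)$ (``exactly three orbit-coefficients equal $\pm1$, exactly one equals $\pm2$, and two vanish'') is asserted without ruling out overlaps between the $C_{31}$-images of the size-$10$ orbits and the order-$31$ orbits; fortunately only the identity-coefficient is needed. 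The paper's argument buys a two-line finish once the Weil-number searches are done; yours buys nothing extra here, though the orbit analysis would be the right tool in a case where the character values are not all forced into a single congruence class.
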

	
	\begin{proof}
	Suppose $D$ is a $CW(155, 36)$.
	Let $\chi_w$ be a character of $C_{155}$ of order $w$. 
	Since $2$ is self-conjugate modulo $5$, we have $\chi_5(D) \equiv 0 \bmod{2}$. 
	With the help of computer, we deduce that the only solution of $X \in \Z[\zeta_{31}]$ satisfying 
	$X\overline{X} =36$ is equivalent to $6$. 
	Hence, $\chi_{31}(D) \equiv 0 \bmod{2}$. 
	Furthermore, with computer assistance, we deduce that there are only two non-equivalent solutions of $X \in \Z[\zeta_{155}]$ satisfying $X\overline{X} = 36$. 
	Both of the solutions are divisible by $2$. 
	Hence, $\chi_{155}(D) \equiv 0 \bmod{2}$ as well. 
	Since $\chi_w(D) \equiv 0 \bmod{2}$ for all divisors $w$ of $155$, by \Cref{inversion}, we must have all the coefficients of $D$ divisible by $2$. 
	But this violates the definition of $D$ being $CW(155,36)$. 
	Contradiction.
	\end{proof}    

\paragraph{Acknowledgment}
I am grateful to my advisor, Bernhard Schmidt for his helpful comments and discussions. Furthermore, I would like to thank the anonymous referees for their useful suggestions. I would also like to thank Artacho et al. \cite[ Remark 4.4 and 4.5]{artacho}, for pointing out the errors in the updated Strassler's table (\Cref{table}) in an earlier version of this paper.

\bibliographystyle{abbrv}

\appendix

\newpage
\section{Updated Strassler's table}
Here, we provide the most updated version of Strassler's table, see \Cref{table}.
We incorporated all latest results known so far, see \cite{ArasuMa, ArasuAli, LeungMa, LeungMa2, SchmidtSmith, Yorgov, ArasuBayesNabavi}. 
Previously open cases that we have settled in this paper are labeled with ``N''. 
Those are $CW(v,n)$ for the following pairs of $(v, n)$: 
$	(60	,	36	),	$ 
$	(120	,	36	),	$ 
$	( 138	,	36	),	$ 
$	(155	,	36	),	$ 
$	( 184	,	36	),	$ 
$	( 128	,	49	),	$ 
$	( 112	,	64	),	$ 
$	( 147	,	64	),	$ 
$	( 184	,	64	),	$ 
$	( 105	,	81	),	$ 
$	( 117	,	81	),	$ 
$	( 184	,	81	),	$ 
$	( 133	,	100	),	$ 
$	( 154	,	100	),	$ 
$	( 158	,	100	),	$ 
$	( 160	,	100	),	$ 
$	( 176	,	100	),	$ 
$	( 190	,	100	),	$ and 
$	( 192	,	100	).	$ 
In the table, we also indicate which cases of $CW(v,n)$ with $n > 25$ that can be proved to be non-existent by the results presented in this paper. 
All $CW(v, n)$s with $n \leq 25$ have been completely classified, see \cite{AngArasuet,Epstein, ArasuLeunget,ArasuLeunget2,
EadesHain,Strassler}.
We use the following labels: 
(A) for Theorem \ref{thm_fielddescent}, 
(B) for Theorem \ref{thm158_100}, 
(C) for Theorem \ref{thm138_36}, 
(D) for Theorem \ref{thm184_36}, 
(E) for Theorem \ref{thm190_100}, 
(F) for Theorem \ref{thm154_100}, 
(G) for Theorem \ref{thm105_81}
(H) for Theorem \ref{thm60_36},
(I) for Theorem \ref{thm120_36},
(J) for Theorem \ref{thm155_36}.

\begin{table}[t]																					
	\footnotesize																					
	\centering																					
	\caption {\Cref{table}: Updated Strassler's table of circulant weighing matrices} \label{table}																					

	\begin{tabular}{|p{0.5cm}||p{0.5cm}|p{0.5cm}|p{0.5cm}|p{0.5cm}|p{0.65cm}|p{1cm}|p{1.35cm}|p{1cm}|p{1cm}|p{1cm}|}																					
		\hline																					
		$s$	&	1	&	2	&	3	&	4	&	5	&	6	&	7	&	8	&	9	&	10	\\[2pt] \hline
		$v$	&		&		&		&		&		&		&		&		&		&		\\[2pt]  \hhline{|=||=|=|=|=|=|=|=|=|=|=|}
		
		1	&	Y 	&	$\cdot$ 	&	$\cdot$ 	&	$\cdot$ 	&	$\cdot$ 	&	$\cdot$ 	&	$\cdot$ 	&	$\cdot$ 	&	$\cdot$ 	&	$\cdot$ 	\\[2pt] \hline
		2	&	Y 	&	$\cdot$ 	&	$\cdot$ 	&	$\cdot$ 	&	$\cdot$ 	&	$\cdot$ 	&	$\cdot$ 	&	$\cdot$ 	&	$\cdot$ 	&	$\cdot$ 	\\[2pt] \hline
		3	&	Y 	&	$\cdot$ 	&	$\cdot$ 	&	$\cdot$ 	&	$\cdot$ 	&	$\cdot$ 	&	$\cdot$ 	&	$\cdot$ 	&	$\cdot$ 	&	$\cdot$ 	\\[2pt] \hline
		4	&	Y 	&	Y 	&	$\cdot$ 	&	$\cdot$ 	&	$\cdot$ 	&	$\cdot$ 	&	$\cdot$ 	&	$\cdot$ 	&	$\cdot$ 	&	$\cdot$ 	\\[2pt] \hline
		5	&	Y 	&	$\cdot$ 	&	$\cdot$ 	&	$\cdot$ 	&	$\cdot$ 	&	$\cdot$ 	&	$\cdot$ 	&	$\cdot$ 	&	$\cdot$ 	&	$\cdot$ 	\\[2pt] \hline
		6	&	Y 	&	Y 	&	$\cdot$ 	&	$\cdot$ 	&	$\cdot$ 	&	$\cdot$ 	&	$\cdot$ 	&	$\cdot$ 	&	$\cdot$ 	&	$\cdot$ 	\\[2pt] \hline
		7	&	Y 	&	Y 	&	$\cdot$ 	&	$\cdot$ 	&	$\cdot$ 	&	$\cdot$ 	&	$\cdot$ 	&	$\cdot$ 	&	$\cdot$ 	&	$\cdot$ 	\\[2pt] \hline
		8	&	Y 	&	Y 	&	$\cdot$ 	&	$\cdot$ 	&	$\cdot$ 	&	$\cdot$ 	&	$\cdot$ 	&	$\cdot$ 	&	$\cdot$ 	&	$\cdot$ 	\\[2pt] \hline
		9	&	Y 	&	$\cdot$ 	&	$\cdot$ 	&	$\cdot$ 	&	$\cdot$ 	&	$\cdot$ 	&	$\cdot$ 	&	$\cdot$ 	&	$\cdot$ 	&	$\cdot$ 	\\[2pt] \hline
		10	&	Y 	&	Y 	&	$\cdot$ 	&	$\cdot$ 	&	$\cdot$ 	&	$\cdot$ 	&	$\cdot$ 	&	$\cdot$ 	&	$\cdot$ 	&	$\cdot$ 	\\[2pt] \hline
		11	&	Y 	&	$\cdot$ 	&	$\cdot$ 	&	$\cdot$ 	&	$\cdot$ 	&	$\cdot$ 	&	$\cdot$ 	&	$\cdot$ 	&	$\cdot$ 	&	$\cdot$ 	\\[2pt] \hline
		12	&	Y 	&	Y 	&	$\cdot$ 	&	$\cdot$ 	&	$\cdot$ 	&	$\cdot$ 	&	$\cdot$ 	&	$\cdot$ 	&	$\cdot$ 	&	$\cdot$ 	\\[2pt] \hline
		13	&	Y 	&	$\cdot$ 	&	Y 	&	$\cdot$ 	&	$\cdot$ 	&	$\cdot$ 	&	$\cdot$ 	&	$\cdot$ 	&	$\cdot$ 	&	$\cdot$ 	\\[2pt] \hline
		14	&	Y 	&	Y 	&	$\cdot$ 	&	$\cdot$ 	&	$\cdot$ 	&	$\cdot$ 	&	$\cdot$ 	&	$\cdot$ 	&	$\cdot$ 	&	$\cdot$ 	\\[2pt] \hline
		15	&	Y 	&	$\cdot$ 	&	$\cdot$ 	&	$\cdot$ 	&	$\cdot$ 	&	$\cdot$ 	&	$\cdot$ 	&	$\cdot$ 	&	$\cdot$ 	&	$\cdot$ 	\\[2pt] \hline
		16	&	Y 	&	Y 	&	$\cdot$ 	&	$\cdot$ 	&	$\cdot$ 	&	$\cdot$ 	&	$\cdot$ 	&	$\cdot$ 	&	$\cdot$ 	&	$\cdot$ 	\\[2pt] \hline
		17	&	Y 	&	$\cdot$ 	&	$\cdot$ 	&	$\cdot$ 	&	$\cdot$ 	&	$\cdot$ 	&	$\cdot$ 	&	$\cdot$ 	&	$\cdot$ 	&	$\cdot$ 	\\[2pt] \hline
		18	&	Y 	&	Y 	&	$\cdot$ 	&	$\cdot$ 	&	$\cdot$ 	&	$\cdot$ 	&	$\cdot$ 	&	$\cdot$ 	&	$\cdot$ 	&	$\cdot$ 	\\[2pt] \hline
		19	&	Y 	&	$\cdot$ 	&	$\cdot$ 	&	$\cdot$ 	&	$\cdot$ 	&	$\cdot$ 	&	$\cdot$ 	&	$\cdot$ 	&	$\cdot$ 	&	$\cdot$ 	\\[2pt] \hline
		20	&	Y 	&	Y 	&	$\cdot$ 	&	$\cdot$ 	&	$\cdot$ 	&	$\cdot$ 	&	$\cdot$ 	&	$\cdot$ 	&	$\cdot$ 	&	$\cdot$ 	\\[2pt] \hline
		21	&	Y 	&	Y 	&	$\cdot$ 	&	Y 	&	$\cdot$ 	&	$\cdot$ 	&	$\cdot$ 	&	$\cdot$ 	&	$\cdot$ 	&	$\cdot$ 	\\[2pt] \hline
		22	&	Y 	&	Y 	&	$\cdot$ 	&	$\cdot$ 	&	$\cdot$ 	&	$\cdot$ 	&	$\cdot$ 	&	$\cdot$ 	&	$\cdot$ 	&	$\cdot$ 	\\[2pt] \hline
		23	&	Y 	&	$\cdot$ 	&	$\cdot$ 	&	$\cdot$ 	&	$\cdot$ 	&	$\cdot$ 	&	$\cdot$ 	&	$\cdot$ 	&	$\cdot$ 	&	$\cdot$ 	\\[2pt] \hline
		24	&	Y 	&	Y 	&	Y 	&	$\cdot$ 	&	$\cdot$ 	&	$\cdot$ 	&	$\cdot$ 	&	$\cdot$ 	&	$\cdot$ 	&	$\cdot$ 	\\[2pt] \hline
		25	&	Y 	&	$\cdot$ 	&	$\cdot$ 	&	$\cdot$ 	&	$\cdot$ 	&	$\cdot$ 	&	$\cdot$ 	&	$\cdot$ 	&	$\cdot$ 	&	$\cdot$ 	\\[2pt] \hline
		26	&	Y 	&	Y 	&	Y 	&	$\cdot$ 	&	$\cdot$ 	&	$\cdot$ 	&	$\cdot$ 	&	$\cdot$ 	&	$\cdot$ 	&	$\cdot$ 	\\[2pt] \hline
		27	&	Y 	&	$\cdot$ 	&	$\cdot$ 	&	$\cdot$ 	&	$\cdot$ 	&	$\cdot$ 	&	$\cdot$ 	&	$\cdot$ 	&	$\cdot$ 	&	$\cdot$ 	\\[2pt] \hline
		28	&	Y 	&	Y 	&	$\cdot$ 	&	Y 	&	$\cdot$ 	&	$\cdot$ 	&	$\cdot$ 	&	$\cdot$ 	&	$\cdot$ 	&	$\cdot$ 	\\[2pt] \hline
		29	&	Y 	&	$\cdot$ 	&	$\cdot$ 	&	$\cdot$ 	&	$\cdot$ 	&	$\cdot$ 	&	$\cdot$ 	&	$\cdot$ 	&	$\cdot$ 	&	$\cdot$ 	\\[2pt] \hline
		30	&	Y 	&	Y 	&	$\cdot$ 	&	$\cdot$ 	&	$\cdot$ 	&	$\cdot$ 	&	$\cdot$ 	&	$\cdot$ 	&	$\cdot$ 	&	$\cdot$ 	\\[2pt] \hline

	\end{tabular}																					
	\caption*{\\Existence (Y), nonexistence ($\cdot$), nonexistence found by computer (*), nonexistence shown in this																					
		paper (N), and open cases (?) for $CW(v, s^2)$}																					
\end{table}

\begin{table}[t]																					
	\footnotesize																					
	\centering																					
	\caption*{\Cref{table} (continued)}																					

	\begin{tabular}{|p{0.5cm}||p{0.5cm}|p{0.5cm}|p{0.5cm}|p{0.5cm}|p{0.65cm}|p{1cm}|p{1.35cm}|p{1cm}|p{1cm}|p{1cm}|}																					
		\hline																					
		$s$	&	1	&	2	&	3	&	4	&	5	&	6	&	7	&	8	&	9	&	10	\\[2pt] \hline
		$v$	&		&		&		&		&		&		&		&		&		&		\\[2pt]  \hhline{|=||=|=|=|=|=|=|=|=|=|=|}
		
		31	&	Y 	&	$\cdot$ 	&	$\cdot$ 	&	Y 	&	Y 	&	$\cdot$ 	&	$\cdot$ 	&	$\cdot$ 	&	$\cdot$ 	&	$\cdot$ 	\\[2pt] \hline
		32	&	Y 	&	Y 	&	$\cdot$ 	&	$\cdot$ 	&	*	&	$\cdot$ 	&	$\cdot$ 	&	$\cdot$ 	&	$\cdot$ 	&	$\cdot$ 	\\[2pt] \hline
		33	&	Y 	&	$\cdot$ 	&	$\cdot$ 	&	$\cdot$ 	&	Y 	&	$\cdot$ 	&	$\cdot$ 	&	$\cdot$ 	&	$\cdot$ 	&	$\cdot$ 	\\[2pt] \hline
		34	&	Y 	&	Y 	&	$\cdot$ 	&	$\cdot$ 	&	$\cdot$ 	&	$\cdot$ 	&	$\cdot$ 	&	$\cdot$ 	&	$\cdot$ 	&	$\cdot$ 	\\[2pt] \hline
		35	&	Y 	&	Y 	&	$\cdot$ 	&	$\cdot$ 	&	$\cdot$ 	&	$\cdot$ 	&	$\cdot$ 	&	$\cdot$ 	&	$\cdot$ 	&	$\cdot$ 	\\[2pt] \hline
		36	&	Y 	&	Y 	&	$\cdot$ 	&	$\cdot$ 	&	$\cdot$ 	&	$\cdot$ 	&	$\cdot$ 	&	$\cdot$ 	&	$\cdot$ 	&	$\cdot$ 	\\[2pt] \hline
		37	&	Y 	&	$\cdot$ 	&	$\cdot$ 	&	$\cdot$ 	&	$\cdot$ 	&	$\cdot$  \hyperref[thm184_36]{(F)}	&	$\cdot$ 	&	$\cdot$ 	&	$\cdot$ 	&	$\cdot$ 	\\[2pt] \hline
		38	&	Y 	&	Y 	&	$\cdot$ 	&	$\cdot$ 	&	$\cdot$ 	&	$\cdot$  \hyperref[thm184_36]{(F)}	&	$\cdot$ 	&	$\cdot$ 	&	$\cdot$ 	&	$\cdot$ 	\\[2pt] \hline
		39	&	Y 	&	$\cdot$ 	&	Y 	&	$\cdot$ 	&	$\cdot$ 	&	$\cdot$ 	&	$\cdot$ 	&	$\cdot$ 	&	$\cdot$ 	&	$\cdot$ 	\\[2pt] \hline
		40	&	Y 	&	Y 	&	$\cdot$ 	&	$\cdot$ 	&	*	&	$\cdot$ 	&	$\cdot$ 	&	$\cdot$ 	&	$\cdot$ 	&	$\cdot$ 	\\[2pt] \hline
		41	&	Y 	&	$\cdot$ 	&	$\cdot$ 	&	$\cdot$ 	&	$\cdot$ 	&	$\cdot$ 	&	$\cdot$ 	&	$\cdot$ 	&	$\cdot$ 	&	$\cdot$ 	\\[2pt] \hline
		42	&	Y 	&	Y 	&	$\cdot$ 	&	Y 	&	$\cdot$ 	&	$\cdot$ 	&	$\cdot$ 	&	$\cdot$ 	&	$\cdot$ 	&	$\cdot$ 	\\[2pt] \hline
		43	&	Y 	&	$\cdot$ 	&	$\cdot$ 	&	$\cdot$ 	&	$\cdot$ 	&	$\cdot$  \hyperref[thm184_36]{(F)}	&	$\cdot$ 	&	$\cdot$ 	&	$\cdot$ 	&	$\cdot$ 	\\[2pt] \hline
		44	&	Y 	&	Y 	&	$\cdot$ 	&	$\cdot$ 	&	$\cdot$ 	&	$\cdot$  \hyperref[thm190_100]{(E)}	&	$\cdot$ 	&	$\cdot$ 	&	$\cdot$ 	&	$\cdot$ 	\\[2pt] \hline
		45	&	Y 	&	$\cdot$ 	&	$\cdot$ 	&	$\cdot$ 	&	$\cdot$ 	&	$\cdot$ 	&	$\cdot$ 	&	$\cdot$ 	&	$\cdot$ 	&	$\cdot$ 	\\[2pt] \hline
		46	&	Y 	&	Y 	&	$\cdot$ 	&	$\cdot$ 	&	$\cdot$ 	&	$\cdot$  \hyperref[thm184_36]{(F)}	&	$\cdot$ 	&	$\cdot$ 	&	$\cdot$ 	&	$\cdot$ 	\\[2pt] \hline
		47	&	Y 	&	$\cdot$ 	&	$\cdot$ 	&	$\cdot$ 	&	$\cdot$ 	&	$\cdot$  \hyperref[thm184_36]{(F)}	&	$\cdot$ 	&	$\cdot$ 	&	$\cdot$ 	&	$\cdot$ 	\\[2pt] \hline
		48	&	Y 	&	Y 	&	Y 	&	$\cdot$ 	&	*	&	Y \cite{SchmidtSmith}	&	$\cdot$ 	&	$\cdot$ 	&	$\cdot$ 	&	$\cdot$ 	\\[2pt] \hline
		49	&	Y 	&	Y 	&	$\cdot$ 	&	$\cdot$ 	&	$\cdot$ 	&	$\cdot$ 	&	$\cdot$  \cite{LeungMa2}	&	$\cdot$ 	&	$\cdot$ 	&	$\cdot$ 	\\[2pt] \hline
		50	&	Y 	&	Y 	&	$\cdot$ 	&	$\cdot$ 	&	$\cdot$ 	&	$\cdot$ 	&	$\cdot$ 	&	$\cdot$ 	&	$\cdot$ 	&	$\cdot$ 	\\[2pt] \hline
		51	&	Y 	&	$\cdot$ 	&	$\cdot$ 	&	$\cdot$ 	&	$\cdot$ 	&	$\cdot$ 	&	$\cdot$  \hyperref[thm158_100]{(C)}	&	$\cdot$ 	&	$\cdot$ 	&	$\cdot$ 	\\[2pt] \hline
		52	&	Y 	&	Y 	&	Y 	&	$\cdot$ 	&	$\cdot$ 	&	Y 	&	$\cdot$  \hyperref[thm158_100]{(C)}	&	$\cdot$ 	&	$\cdot$ 	&	$\cdot$ 	\\[2pt] \hline
		53	&	Y 	&	$\cdot$ 	&	$\cdot$ 	&	$\cdot$ 	&	$\cdot$ 	&	$\cdot$  \hyperref[thm184_36]{(F)}	&	$\cdot$  \hyperref[thm158_100]{(C)}	&	$\cdot$ 	&	$\cdot$ 	&	$\cdot$ 	\\[2pt] \hline
		54	&	Y 	&	Y 	&	$\cdot$ 	&	$\cdot$ 	&	$\cdot$ 	&	$\cdot$ 	&	* \hyperref[thm_fielddescent]{(A)}	&	$\cdot$ 	&	$\cdot$ 	&	$\cdot$ 	\\[2pt] \hline
		55	&	Y 	&	$\cdot$ 	&	$\cdot$ 	&	$\cdot$ 	&	$\cdot$ 	&	$\cdot$  \hyperref[thm190_100]{(E)}	&	$\cdot$  \hyperref[thm158_100]{(C)}	&	$\cdot$ 	&	$\cdot$ 	&	$\cdot$ 	\\[2pt] \hline
		56	&	Y 	&	Y 	&	$\cdot$ 	&	Y 	&	$\cdot$ 	&	$\cdot$ 	&	$\cdot$  \cite{LeungMa2}	&	$\cdot$ 	&	$\cdot$ 	&	$\cdot$ 	\\[2pt] \hline
		57	&	Y 	&	$\cdot$ 	&	$\cdot$ 	&	$\cdot$ 	&	$\cdot$ 	&	$\cdot$  \hyperref[thm158_100]{(C)}	&	Y 	&	$\cdot$ 	&	$\cdot$ 	&	$\cdot$ 	\\[2pt] \hline
		58	&	Y 	&	Y 	&	$\cdot$ 	&	$\cdot$ 	&	$\cdot$ 	&	$\cdot$  \hyperref[thm184_36]{(F)}	&	$\cdot$ 	&	$\cdot$ 	&	$\cdot$ 	&	$\cdot$ 	\\[2pt] \hline
		59	&	Y 	&	$\cdot$ 	&	$\cdot$ 	&	$\cdot$ 	&	$\cdot$ 	&	$\cdot$  \hyperref[thm184_36]{(F)}	&	$\cdot$  \hyperref[thm158_100]{(C)}	&	$\cdot$ 	&	$\cdot$ 	&	$\cdot$ 	\\[2pt] \hline
		60	&	Y 	&	Y 	&	$\cdot$ 	&	$\cdot$ 	&	$\cdot$ 	&	N \hyperref[thm60_36]{(H)}	&	$\cdot$ 	&	$\cdot$ 	&	$\cdot$ 	&	$\cdot$ 	\\[2pt] \hline

	\end{tabular}																					
	\caption*{\\Existence (Y), nonexistence ($\cdot$), nonexistence found by computer (*), nonexistence shown in this																					
		paper (N), and open cases (?) for $CW(v, s^2)$}																					
\end{table}

\begin{table}[t]																					
	\footnotesize																					
	\centering																					
	\caption*{\Cref{table} (continued)}																					

	\begin{tabular}{|p{0.5cm}||p{0.5cm}|p{0.5cm}|p{0.5cm}|p{0.5cm}|p{0.65cm}|p{1cm}|p{1.35cm}|p{1cm}|p{1cm}|p{1cm}|}																					
		
		\hline																					
		$s$	&	1	&	2	&	3	&	4	&	5	&	6	&	7	&	8	&	9	&	10	\\[2pt] \hline
		$v$	&		&		&		&		&		&		&		&		&		&		\\[2pt]  \hhline{|=||=|=|=|=|=|=|=|=|=|=|}
		
		61	&	Y 	&	$\cdot$ 	&	$\cdot$ 	&	$\cdot$ 	&	$\cdot$ 	&	$\cdot$  \hyperref[thm184_36]{(F)}	&	$\cdot$  \hyperref[thm158_100]{(C)}	&	$\cdot$ 	&	$\cdot$ 	&	$\cdot$ 	\\[2pt] \hline
		62	&	Y 	&	Y 	&	$\cdot$ 	&	Y 	&	Y 	&	$\cdot$ 	&	$\cdot$  \hyperref[thm158_100]{(C)}	&	$\cdot$ 	&	$\cdot$ 	&	$\cdot$ 	\\[2pt] \hline
		63	&	Y 	&	Y 	&	$\cdot$ 	&	Y 	&	$\cdot$ 	&	$\cdot$ 	&	N \cite{LeungMa2}  \hyperref[thm_fielddescent]{(A)}	&	$\cdot$ 	&	$\cdot$ 	&	$\cdot$ 	\\[2pt] \hline
		64	&	Y 	&	Y 	&	$\cdot$ 	&	$\cdot$ 	&	*	&	$\cdot$ 	&	* \hyperref[thm_fielddescent]{(A)}	&	$\cdot$ 	&	$\cdot$ 	&	$\cdot$ 	\\[2pt] \hline
		65	&	Y 	&	$\cdot$ 	&	Y 	&	$\cdot$ 	&	$\cdot$ 	&	$\cdot$ 	&	$\cdot$  \hyperref[thm158_100]{(C)}	&	$\cdot$  \hyperref[thm158_100]{(C)}	&	$\cdot$ 	&	$\cdot$ 	\\[2pt] \hline
		66	&	Y 	&	Y 	&	$\cdot$ 	&	$\cdot$ 	&	Y 	&	$\cdot$  \hyperref[thm190_100]{(E)}	&	$\cdot$ 	&	$\cdot$ 	&	$\cdot$ 	&	$\cdot$ 	\\[2pt] \hline
		67	&	Y 	&	$\cdot$ 	&	$\cdot$ 	&	$\cdot$ 	&	$\cdot$ 	&	$\cdot$  \hyperref[thm184_36]{(F)}	&	$\cdot$  \hyperref[thm158_100]{(C)}	&	$\cdot$  \hyperref[thm184_36]{(F)}	&	$\cdot$ 	&	$\cdot$ 	\\[2pt] \hline
		68	&	Y 	&	Y 	&	$\cdot$ 	&	$\cdot$ 	&	$\cdot$ 	&	$\cdot$ 	&	$\cdot$  \hyperref[thm158_100]{(C)}	&	$\cdot$ 	&	$\cdot$ 	&	$\cdot$ 	\\[2pt] \hline
		69	&	Y 	&	$\cdot$ 	&	$\cdot$ 	&	$\cdot$ 	&	$\cdot$ 	&	$\cdot$  \hyperref[thm158_100]{(C)}	&	$\cdot$  \hyperref[thm158_100]{(C)}	&	$\cdot$  \hyperref[thm158_100]{(C)}	&	$\cdot$ 	&	$\cdot$ 	\\[2pt] \hline
		70	&	Y 	&	Y 	&	$\cdot$ 	&	Y 	&	$\cdot$ 	&	*	&	$\cdot$  \cite{LeungMa2}	&	$\cdot$ 	&	$\cdot$ 	&	$\cdot$ 	\\[2pt] \hline
		71	&	Y 	&	$\cdot$ 	&	$\cdot$ 	&	$\cdot$ 	&	Y 	&	$\cdot$  \hyperref[thm184_36]{(F)}	&	$\cdot$  \hyperref[thm158_100]{(C)}	&	$\cdot$  \hyperref[thm184_36]{(F)}	&	$\cdot$ 	&	$\cdot$ 	\\[2pt] \hline
		72	&	Y 	&	Y 	&	Y 	&	$\cdot$ 	&	$\cdot$ 	&	$\cdot$ 	&	*	&	$\cdot$ 	&	$\cdot$ 	&	$\cdot$ 	\\[2pt] \hline
		73	&	Y 	&	$\cdot$ 	&	$\cdot$ 	&	$\cdot$ 	&	$\cdot$ 	&	$\cdot$ 	&	$\cdot$  \hyperref[thm158_100]{(C)}	&	Y 	&	$\cdot$ 	&	$\cdot$ 	\\[2pt] \hline
		74	&	Y 	&	Y 	&	$\cdot$ 	&	$\cdot$ 	&	$\cdot$ 	&	$\cdot$  \hyperref[thm184_36]{(F)}	&	$\cdot$  \hyperref[thm158_100]{(C)}	&	$\cdot$  \hyperref[thm184_36]{(F)}	&	$\cdot$ 	&	$\cdot$ 	\\[2pt] \hline
		75	&	Y 	&	$\cdot$ 	&	$\cdot$ 	&	$\cdot$ 	&	$\cdot$ 	&	$\cdot$ 	&	$\cdot$ 	&	$\cdot$ 	&	$\cdot$ 	&	$\cdot$ 	\\[2pt] \hline
		76	&	Y 	&	Y 	&	$\cdot$ 	&	$\cdot$ 	&	$\cdot$ 	&	$\cdot$  \hyperref[thm184_36]{(F)}	&	*	&	$\cdot$  \hyperref[thm184_36]{(F)}	&	$\cdot$ 	&	$\cdot$ 	\\[2pt] \hline
		77	&	Y 	&	Y 	&	$\cdot$ 	&	$\cdot$ 	&	$\cdot$ 	&	$\cdot$ 	&	$\cdot$  \cite{LeungMa2}	&	$\cdot$ 	&	$\cdot$ 	&	$\cdot$ 	\\[2pt] \hline
		78	&	Y 	&	Y 	&	Y 	&	$\cdot$ 	&	$\cdot$ 	&	Y 	&	$\cdot$  \hyperref[thm158_100]{(C)}	&	$\cdot$  \hyperref[thm158_100]{(C)}	&	$\cdot$ 	&	$\cdot$ 	\\[2pt] \hline
		79	&	Y 	&	$\cdot$ 	&	$\cdot$ 	&	$\cdot$ 	&	$\cdot$ 	&	$\cdot$  \hyperref[thm184_36]{(F)}	&	$\cdot$  \hyperref[thm158_100]{(C)}	&	$\cdot$  \hyperref[thm184_36]{(F)}	&	$\cdot$ 	&	$\cdot$ 	\\[2pt] \hline
		80	&	Y 	&	Y 	&	$\cdot$ 	&	$\cdot$ 	&	*	&	$\cdot$ 	&	*	&	$\cdot$ 	&	$\cdot$ 	&	$\cdot$ 	\\[2pt] \hline
		81	&	Y 	&	$\cdot$ 	&	$\cdot$ 	&	$\cdot$ 	&	$\cdot$ 	&	$\cdot$ 	&	$\cdot$ 	&	$\cdot$ 	&	$\cdot$  \hyperref[cor105_81]{(B)}	&	$\cdot$ 	\\[2pt] \hline
		82	&	Y 	&	Y 	&	$\cdot$ 	&	$\cdot$ 	&	$\cdot$ 	&	$\cdot$ 	&	$\cdot$  \hyperref[thm158_100]{(C)}	&	$\cdot$  \hyperref[thm184_36]{(F)}	&	$\cdot$ 	&	$\cdot$ 	\\[2pt] \hline
		83	&	Y 	&	$\cdot$ 	&	$\cdot$ 	&	$\cdot$ 	&	$\cdot$ 	&	$\cdot$  \hyperref[thm184_36]{(F)}	&	$\cdot$  \hyperref[thm158_100]{(C)}	&	$\cdot$  \hyperref[thm184_36]{(F)}	&	$\cdot$  \hyperref[thm184_36]{(F)}	&	$\cdot$ 	\\[2pt] \hline
		84	&	Y 	&	Y 	&	$\cdot$ 	&	Y 	&	$\cdot$ 	&	$\cdot$ 	&	$\cdot$  \cite{LeungMa2}	&	Y 	&	$\cdot$ 	&	$\cdot$ 	\\[2pt] \hline
		85	&	Y 	&	$\cdot$ 	&	$\cdot$ 	&	$\cdot$ 	&	$\cdot$ 	&	$\cdot$ 	&	$\cdot$  \hyperref[thm158_100]{(C)}	&	$\cdot$ 	&	$\cdot$  \hyperref[thm158_100]{(C)}	&	$\cdot$ 	\\[2pt] \hline
		86	&	Y 	&	Y 	&	$\cdot$ 	&	$\cdot$ 	&	$\cdot$ 	&	$\cdot$  \hyperref[thm184_36]{(F)}	&	$\cdot$ 	&	$\cdot$  \hyperref[thm184_36]{(F)}	&	$\cdot$  \hyperref[thm184_36]{(F)}	&	$\cdot$ 	\\[2pt] \hline
		87	&	Y 	&	$\cdot$ 	&	$\cdot$ 	&	$\cdot$ 	&	$\cdot$ 	&	$\cdot$  \hyperref[thm158_100]{(C)}	&	Y 	&	$\cdot$  \hyperref[thm158_100]{(C)}	&	$\cdot$  \hyperref[thm158_100]{(C)}	&	$\cdot$ 	\\[2pt] \hline
		88	&	Y 	&	Y 	&	$\cdot$ 	&	$\cdot$ 	&	*	&	$\cdot$  \hyperref[thm190_100]{(E)}	&	$\cdot$  \hyperref[thm158_100]{(C)}	&	$\cdot$ \cite{ArasuBayesNabavi} \hyperref[thm190_100]{(E)}	&	$\cdot$ 	&	$\cdot$ 	\\[2pt] \hline
		89	&	Y 	&	$\cdot$ 	&	$\cdot$ 	&	$\cdot$ 	&	$\cdot$ 	&	$\cdot$  \hyperref[thm184_36]{(F)}	&	$\cdot$  \hyperref[thm158_100]{(C)}	&	$\cdot$  \hyperref[thm184_36]{(F)}	&	$\cdot$  \hyperref[thm184_36]{(F)}	&	$\cdot$ 	\\[2pt] \hline
		90	&	Y 	&	Y 	&	$\cdot$ 	&	$\cdot$ 	&	$\cdot$ 	&	$\cdot$ 	&	*	&	*	&	$\cdot$ 	&	$\cdot$ 	\\[2pt] \hline

	\end{tabular}																					
	\caption*{\\Existence (Y), nonexistence ($\cdot$), nonexistence found by computer (*), nonexistence shown in this																					
		paper (N), and open cases (?) for $CW(v, s^2)$}																					
\end{table}

\begin{table}[t]																					
	\footnotesize																					
	\centering																					
	\caption*{\Cref{table} (continued)}																					

	\begin{tabular}{|p{0.5cm}||p{0.5cm}|p{0.5cm}|p{0.5cm}|p{0.5cm}|p{0.65cm}|p{1cm}|p{1.35cm}|p{1cm}|p{1cm}|p{1cm}|}																					
		
		\hline																					
		$s$	&	1	&	2	&	3	&	4	&	5	&	6	&	7	&	8	&	9	&	10	\\[2pt] \hline
		$v$	&		&		&		&		&		&		&		&		&		&		\\[2pt]  \hhline{|=||=|=|=|=|=|=|=|=|=|=|}
		
		91	&	Y 	&	Y 	&	Y 	&	$\cdot$ 	&	$\cdot$ 	&	Y 	&	$\cdot$  \cite{LeungMa2}	&	$\cdot$ 	&	Y 	&	$\cdot$ 	\\[2pt] \hline
		92	&	Y 	&	Y 	&	$\cdot$ 	&	$\cdot$ 	&	$\cdot$ 	&	$\cdot$  \hyperref[thm184_36]{(F)}	&	$\cdot$  \hyperref[thm158_100]{(C)}	&	$\cdot$  \hyperref[thm184_36]{(F)}	&	$\cdot$  \hyperref[thm158_100]{(C)}	&	$\cdot$ 	\\[2pt] \hline
		93	&	Y 	&	$\cdot$ 	&	$\cdot$ 	&	Y 	&	Y 	&	$\cdot$ 	&	$\cdot$  \hyperref[thm158_100]{(C)}	&	$\cdot$ 	&	$\cdot$  \hyperref[thm158_100]{(C)}	&	$\cdot$ 	\\[2pt] \hline
		94	&	Y 	&	Y 	&	$\cdot$ 	&	$\cdot$ 	&	$\cdot$ 	&	$\cdot$  \hyperref[thm184_36]{(F)}	&	$\cdot$  \hyperref[thm158_100]{(C)}	&	$\cdot$  \hyperref[thm184_36]{(F)}	&	$\cdot$  \hyperref[thm184_36]{(F)}	&	$\cdot$ 	\\[2pt] \hline
		95	&	Y 	&	$\cdot$ 	&	$\cdot$ 	&	$\cdot$ 	&	$\cdot$ 	&	$\cdot$  \hyperref[thm158_100]{(C)}	&	*	&	$\cdot$  \hyperref[thm158_100]{(C)}	&	$\cdot$  \hyperref[thm158_100]{(C)}	&	$\cdot$ 	\\[2pt] \hline
		96	&	Y 	&	Y 	&	Y 	&	$\cdot$ 	&	*	&	Y 	&	*	&	$\cdot$ 	&	$\cdot$ 	&	$\cdot$ 	\\[2pt] \hline
		97	&	Y 	&	$\cdot$ 	&	$\cdot$ 	&	$\cdot$ 	&	$\cdot$ 	&	$\cdot$  \hyperref[thm184_36]{(F)}	&	$\cdot$  \hyperref[thm158_100]{(C)}	&	$\cdot$  \hyperref[thm184_36]{(F)}	&	$\cdot$  \hyperref[thm184_36]{(F)}	&	$\cdot$ 	\\[2pt] \hline
		98	&	Y 	&	Y 	&	$\cdot$ 	&	Y 	&	$\cdot$ 	&	$\cdot$ 	&	$\cdot$  \cite{LeungMa2}	&	$\cdot$ 	&	$\cdot$ 	&	$\cdot$ 	\\[2pt] \hline
		99	&	Y 	&	$\cdot$ 	&	$\cdot$ 	&	$\cdot$ 	&	Y 	&	$\cdot$  \hyperref[thm190_100]{(E)}	&	$\cdot$ 	&	$\cdot$ \cite{ArasuBayesNabavi} \hyperref[thm158_100]{(C)}	&	$\cdot$ 	&	$\cdot$ 	\\[2pt] \hline
		100	&	Y 	&	Y 	&	$\cdot$ 	&	$\cdot$ 	&	$\cdot$ 	&	$\cdot$ 	&	$\cdot$ 	&	$\cdot$ 	&	$\cdot$ 	&	$\cdot$ 	\\[2pt] \hline
		101	&	Y 	&	$\cdot$ 	&	$\cdot$ 	&	$\cdot$ 	&	$\cdot$ 	&	$\cdot$  \hyperref[thm184_36]{(F)}	&	$\cdot$  \hyperref[thm158_100]{(C)}	&	$\cdot$  \hyperref[thm184_36]{(F)}	&	$\cdot$  \hyperref[thm184_36]{(F)}	&	$\cdot$  \hyperref[thm158_100]{(C)}	\\[2pt] \hline
		102	&	Y 	&	Y 	&	$\cdot$ 	&	$\cdot$ 	&	$\cdot$ 	&	$\cdot$ 	&	$\cdot$  \hyperref[thm158_100]{(C)}	&	$\cdot$ 	&	$\cdot$  \hyperref[thm158_100]{(C)}	&	$\cdot$ 	\\[2pt] \hline
		103	&	Y 	&	$\cdot$ 	&	$\cdot$ 	&	$\cdot$ 	&	$\cdot$ 	&	$\cdot$  \hyperref[thm184_36]{(F)}	&	$\cdot$  \hyperref[thm158_100]{(C)}	&	$\cdot$  \hyperref[thm184_36]{(F)}	&	$\cdot$  \hyperref[thm184_36]{(F)}	&	$\cdot$  \hyperref[thm158_100]{(C)}	\\[2pt] \hline
		104	&	Y 	&	Y 	&	Y 	&	$\cdot$ 	&	$\cdot$ 	&	Y 	&	$\cdot$  \hyperref[thm158_100]{(C)}	&	$\cdot$  \hyperref[thm184_36]{(F)}	&	? 	&	$\cdot$ 	\\[2pt] \hline
		105	&	Y 	&	Y 	&	$\cdot$ 	&	Y 	&	$\cdot$ 	&	? 	&	$\cdot$  \cite{LeungMa2}	&	$\cdot$ 	&	N  \hyperref[cor105_81]{(B)}	&	$\cdot$ 	\\[2pt] \hline
		106	&	Y 	&	Y 	&	$\cdot$ 	&	$\cdot$ 	&	$\cdot$ 	&	$\cdot$  \hyperref[thm184_36]{(F)}	&	$\cdot$  \hyperref[thm158_100]{(C)}	&	$\cdot$  \hyperref[thm184_36]{(F)}	&	$\cdot$  \hyperref[thm184_36]{(F)}	&	$\cdot$  \hyperref[thm158_100]{(C)}	\\[2pt] \hline
		107	&	Y 	&	$\cdot$ 	&	$\cdot$ 	&	$\cdot$ 	&	$\cdot$ 	&	$\cdot$  \hyperref[thm184_36]{(F)}	&	$\cdot$  \hyperref[thm158_100]{(C)}	&	$\cdot$  \hyperref[thm184_36]{(F)}	&	$\cdot$  \hyperref[thm184_36]{(F)}	&	$\cdot$  \hyperref[thm158_100]{(C)}	\\[2pt] \hline
		108	&	Y 	&	Y 	&	$\cdot$ 	&	$\cdot$ 	&	$\cdot$ 	&	$\cdot$ 	&	$\cdot$ 	&	$\cdot$ 	&	$\cdot$ 	&	$\cdot$ 	\\[2pt] \hline
		109	&	Y 	&	$\cdot$ 	&	$\cdot$ 	&	$\cdot$ 	&	$\cdot$ 	&	$\cdot$  \hyperref[thm184_36]{(F)}	&	$\cdot$  \hyperref[thm158_100]{(C)}	&	$\cdot$  \hyperref[thm184_36]{(F)}	&	$\cdot$  \hyperref[thm184_36]{(F)}	&	$\cdot$ 	\\[2pt] \hline
		110	&	Y 	&	Y 	&	$\cdot$ 	&	$\cdot$ 	&	$\cdot$ \cite{LeungMa}	&	$\cdot$  \hyperref[thm190_100]{(E)}	&	$\cdot$  \hyperref[thm158_100]{(C)}	&	$\cdot$  \hyperref[thm190_100]{(E)}	&	? 	&	$\cdot$ \cite{ArasuMa}	\\[2pt] \hline
		111	&	Y 	&	$\cdot$ 	&	$\cdot$ 	&	$\cdot$ 	&	$\cdot$ 	&	$\cdot$  \hyperref[thm158_100]{(C)}	&	$\cdot$ 	&	$\cdot$  \hyperref[thm158_100]{(C)}	&	$\cdot$  \hyperref[thm158_100]{(C)}	&	$\cdot$  \hyperref[thm158_100]{(C)}	\\[2pt] \hline
		112	&	Y 	&	Y 	&	$\cdot$ 	&	Y 	&	$\cdot$ \cite{Yorgov}	&	? 	&	$\cdot$  \cite{LeungMa2}	&	N \hyperref[thm190_100]{(E)}	&	$\cdot$ 	&	? 	\\[2pt] \hline
		113	&	Y 	&	$\cdot$ 	&	$\cdot$ 	&	$\cdot$ 	&	$\cdot$ 	&	$\cdot$  \hyperref[thm184_36]{(F)}	&	$\cdot$  \hyperref[thm158_100]{(C)}	&	$\cdot$  \hyperref[thm184_36]{(F)}	&	$\cdot$  \hyperref[thm184_36]{(F)}	&	$\cdot$  \hyperref[thm158_100]{(C)}	\\[2pt] \hline
		114	&	Y 	&	Y 	&	$\cdot$ 	&	$\cdot$ 	&	$\cdot$ 	&	$\cdot$  \hyperref[thm190_100]{(E)}	&	Y 	&	$\cdot$  \hyperref[thm158_100]{(C)}	&	$\cdot$  \hyperref[thm158_100]{(C)}	&	$\cdot$  \hyperref[thm190_100]{(E)}	\\[2pt] \hline
		115	&	Y 	&	$\cdot$ 	&	$\cdot$ 	&	$\cdot$ 	&	$\cdot$ 	&	$\cdot$  \hyperref[thm158_100]{(C)}	&	$\cdot$  \hyperref[thm158_100]{(C)}	&	$\cdot$  \hyperref[thm158_100]{(C)}	&	$\cdot$  \hyperref[thm158_100]{(C)}	&	$\cdot$  \hyperref[thm190_100]{(E)}	\\[2pt] \hline
		116	&	Y 	&	Y 	&	$\cdot$ 	&	$\cdot$ 	&	$\cdot$ 	&	$\cdot$  \hyperref[thm184_36]{(F)}	&	? 	&	$\cdot$  \hyperref[thm184_36]{(F)}	&	$\cdot$  \hyperref[thm184_36]{(F)}	&	$\cdot$ 	\\[2pt] \hline
		117	&	Y 	&	$\cdot$ 	&	Y 	&	$\cdot$ 	&	$\cdot$ \cite{Yorgov}	&	? 	&	$\cdot$ 	&	$\cdot$  \hyperref[thm158_100]{(C)}	&	N \hyperref[thm_fielddescent]{(A)}	&	$\cdot$ 	\\[2pt] \hline
		118	&	Y 	&	Y 	&	$\cdot$ 	&	$\cdot$ 	&	$\cdot$ 	&	$\cdot$  \hyperref[thm184_36]{(F)}	&	$\cdot$  \hyperref[thm158_100]{(C)}	&	$\cdot$  \hyperref[thm184_36]{(F)}	&	$\cdot$  \hyperref[thm184_36]{(F)}	&	$\cdot$  \hyperref[thm158_100]{(C)}	\\[2pt] \hline
		119	&	Y 	&	Y 	&	$\cdot$ 	&	$\cdot$ 	&	$\cdot$ 	&	$\cdot$ 	&	$\cdot$  \cite{LeungMa2}	&	$\cdot$  \hyperref[thm190_100]{(E)}	&	$\cdot$  \hyperref[thm158_100]{(C)}	&	$\cdot$  \hyperref[thm190_100]{(E)}	\\[2pt] \hline
		120	&	Y 	&	Y 	&	Y 	&	$\cdot$ 	&	$\cdot$ \cite{LeungMa}	&	N \hyperref[thm120_36]{(I)}	&	? 	&	$\cdot$ 	&	$\cdot$ 	&	? 	\\[2pt] \hline
		
	\end{tabular}																					
	\caption*{\\Existence (Y), nonexistence ($\cdot$), nonexistence found by computer (*), nonexistence shown in this																					
		paper (N), and open cases (?) for $CW(v, s^2)$}																					
\end{table}

\begin{table}[t]																					
	\footnotesize																					
	\centering																					
	\caption*{\Cref{table} (continued)}																					
	\begin{tabular}{|p{0.5cm}||p{0.5cm}|p{0.5cm}|p{0.5cm}|p{0.5cm}|p{0.65cm}|p{1cm}|p{1.35cm}|p{1cm}|p{1cm}|p{1cm}|}																					
		\hline																					
		$s$	&	1	&	2	&	3	&	4	&	5	&	6	&	7	&	8	&	9	&	10	\\[2pt] \hline
		$v$	&		&		&		&		&		&		&		&		&		&		\\[2pt]  \hhline{|=||=|=|=|=|=|=|=|=|=|=|}
		
		121	&	Y 	&	$\cdot$ 	&	$\cdot$ 	&	$\cdot$ 	&	$\cdot$ 	&	$\cdot$ 	&	$\cdot$  \hyperref[thm158_100]{(C)}	&	$\cdot$  \hyperref[thm158_100]{(C)}	&	Y 	&	$\cdot$ 	\\[2pt] \hline
		122	&	Y 	&	Y 	&	$\cdot$ 	&	$\cdot$ 	&	$\cdot$ 	&	$\cdot$  \hyperref[thm184_36]{(F)}	&	$\cdot$  \hyperref[thm158_100]{(C)}	&	$\cdot$  \hyperref[thm184_36]{(F)}	&	$\cdot$ 	&	$\cdot$  \hyperref[thm158_100]{(C)}	\\[2pt] \hline
		123	&	Y 	&	$\cdot$ 	&	$\cdot$ 	&	$\cdot$ 	&	$\cdot$ 	&	$\cdot$ 	&	$\cdot$  \hyperref[thm158_100]{(C)}	&	$\cdot$  \hyperref[thm158_100]{(C)}	&	$\cdot$ 	&	$\cdot$  \hyperref[thm158_100]{(C)}	\\[2pt] \hline
		124	&	Y 	&	Y 	&	$\cdot$ 	&	Y 	&	Y 	&	$\cdot$ 	&	$\cdot$  \hyperref[thm158_100]{(C)}	&	Y 	&	$\cdot$  \hyperref[thm184_36]{(F)}	&	Y 	\\[2pt] \hline
		125	&	Y 	&	$\cdot$ 	&	$\cdot$ 	&	$\cdot$ 	&	$\cdot$ 	&	$\cdot$ 	&	$\cdot$ 	&	$\cdot$ 	&	$\cdot$  \hyperref[cor105_81]{(B)}	&	$\cdot$ 	\\[2pt] \hline
		126	&	Y 	&	Y 	&	$\cdot$ 	&	Y 	&	$\cdot$ 	&	$\cdot$ 	&	N \cite{LeungMa2} \hyperref[thm_fielddescent]{(A)} 	&	Y\cite{arasudillon} 	&	$\cdot$ 	&	$\cdot$ 	\\[2pt] \hline
		127	&	Y 	&	$\cdot$ 	&	$\cdot$ 	&	$\cdot$ 	&	$\cdot$ 	&	$\cdot$ 	&	$\cdot$  \hyperref[thm158_100]{(C)}	&	Y 	&	$\cdot$  \hyperref[thm184_36]{(F)}	&	$\cdot$ 	\\[2pt] \hline
		128	&	Y 	&	Y 	&	$\cdot$ 	&	$\cdot$ 	&	$\cdot$ 	&	$\cdot$ 	&	N  \hyperref[thm_fielddescent]{(A)}	&	$\cdot$\cite{SchmidtSmith}	&	$\cdot$ 	&	$\cdot$ 	\\[2pt] \hline
		129	&	Y 	&	$\cdot$ 	&	$\cdot$ 	&	$\cdot$ 	&	$\cdot$ 	&	$\cdot$  \hyperref[thm158_100]{(C)}	&	$\cdot$ 	&	$\cdot$  \hyperref[thm158_100]{(C)}	&	$\cdot$  \hyperref[thm158_100]{(C)}	&	$\cdot$  \hyperref[thm158_100]{(C)}	\\[2pt] \hline
		130	&	Y 	&	Y 	&	Y 	&	$\cdot$ 	&	$\cdot$ 	&	Y 	&	$\cdot$  \hyperref[thm158_100]{(C)}	&	$\cdot$  \hyperref[thm158_100]{(C)}	&	? 	&	$\cdot$ 	\\[2pt] \hline
		131	&	Y 	&	$\cdot$ 	&	$\cdot$ 	&	$\cdot$ 	&	$\cdot$ 	&	$\cdot$  \hyperref[thm184_36]{(F)}	&	$\cdot$  \hyperref[thm158_100]{(C)}	&	$\cdot$  \hyperref[thm184_36]{(F)}	&	$\cdot$  \hyperref[thm184_36]{(F)}	&	$\cdot$  \hyperref[thm158_100]{(C)}	\\[2pt] \hline
		132	&	Y 	&	Y 	&	$\cdot$ 	&	$\cdot$ 	&	Y 	&	$\cdot$  \hyperref[thm190_100]{(E)}	&	$\cdot$ 	&	$\cdot$ 	&	? 	&	Y 	\\[2pt] \hline
		133	&	Y 	&	Y 	&	$\cdot$ 	&	$\cdot$ 	&	$\cdot$ \cite{Yorgov}	&	$\cdot$ 	&	$\cdot$  \cite{LeungMa2}	&	$\cdot$  \hyperref[thm158_100]{(C)}	&	$\cdot$  \hyperref[thm158_100]{(C)}	&	N \hyperref[thm190_100]{(E)}	\\[2pt] \hline
		134	&	Y 	&	Y 	&	$\cdot$ 	&	$\cdot$ 	&	$\cdot$ 	&	$\cdot$  \hyperref[thm184_36]{(F)}	&	$\cdot$  \hyperref[thm158_100]{(C)}	&	$\cdot$  \hyperref[thm184_36]{(F)}	&	$\cdot$  \hyperref[thm184_36]{(F)}	&	$\cdot$  \hyperref[thm158_100]{(C)}	\\[2pt] \hline
		135	&	Y 	&	$\cdot$ 	&	$\cdot$ 	&	$\cdot$ 	&	$\cdot$ 	&	$\cdot$ 	&	$\cdot$ 	&	$\cdot$ 	&	$\cdot$  \hyperref[cor105_81]{(B)}	&	$\cdot$ 	\\[2pt] \hline
		136	&	Y 	&	Y 	&	$\cdot$ 	&	$\cdot$ 	&	$\cdot$ 	&	$\cdot$ 	&	$\cdot$  \hyperref[thm158_100]{(C)}	&	$\cdot$ 	&	$\cdot$  \hyperref[thm158_100]{(C)}	&	$\cdot$ 	\\[2pt] \hline
		137	&	Y 	&	$\cdot$ 	&	$\cdot$ 	&	$\cdot$ 	&	$\cdot$ 	&	$\cdot$  \hyperref[thm184_36]{(F)}	&	$\cdot$  \hyperref[thm158_100]{(C)}	&	$\cdot$  \hyperref[thm184_36]{(F)}	&	$\cdot$  \hyperref[thm184_36]{(F)}	&	$\cdot$  \hyperref[thm158_100]{(C)}	\\[2pt] \hline
		138	&	Y 	&	Y 	&	$\cdot$ 	&	$\cdot$ 	&	$\cdot$ 	&	N \hyperref[thm138_36]{(D)}	&	$\cdot$  \hyperref[thm158_100]{(C)}	&	$\cdot$  \hyperref[thm158_100]{(C)}	&	$\cdot$  \hyperref[thm190_100]{(E)}	&	$\cdot$  \hyperref[thm190_100]{(E)}	\\[2pt] \hline
		139	&	Y 	&	$\cdot$ 	&	$\cdot$ 	&	$\cdot$ 	&	$\cdot$ 	&	$\cdot$  \hyperref[thm184_36]{(F)}	&	$\cdot$  \hyperref[thm158_100]{(C)}	&	$\cdot$  \hyperref[thm184_36]{(F)}	&	$\cdot$  \hyperref[thm184_36]{(F)}	&	$\cdot$  \hyperref[thm158_100]{(C)}	\\[2pt] \hline
		140	&	Y 	&	Y 	&	$\cdot$ 	&	Y 	&	$\cdot$ 	&	? 	&	$\cdot$ \cite{LeungMa2}	&	? 	&	$\cdot$ 	&	$\cdot$ 	\\[2pt] \hline
		141	&	Y 	&	$\cdot$ 	&	$\cdot$ 	&	$\cdot$ 	&	$\cdot$ 	&	$\cdot$  \hyperref[thm158_100]{(C)}	&	$\cdot$  \hyperref[thm158_100]{(C)}	&	$\cdot$  \hyperref[thm158_100]{(C)}	&	$\cdot$  \hyperref[thm158_100]{(C)}	&	$\cdot$  \hyperref[thm158_100]{(C)}	\\[2pt] \hline
		142	&	Y 	&	Y 	&	$\cdot$ 	&	$\cdot$ 	&	Y 	&	$\cdot$  \hyperref[thm184_36]{(F)}	&	$\cdot$  \hyperref[thm158_100]{(C)}	&	$\cdot$  \hyperref[thm184_36]{(F)}	&	$\cdot$  \hyperref[thm184_36]{(F)}	&	Y 	\\[2pt] \hline
		143	&	Y 	&	$\cdot$ 	&	Y 	&	$\cdot$ 	&	$\cdot$ 	&	? 	&	$\cdot$  \hyperref[thm158_100]{(C)}	&	$\cdot$  \hyperref[thm158_100]{(C)}	&	? 	&	$\cdot$ 	\\[2pt] \hline
		144	&	Y 	&	Y 	&	Y 	&	$\cdot$ 	&	$\cdot$ \cite{LeungMa}	&	Y \cite{SchmidtSmith}	&	? 	&	$\cdot$ 	&	$\cdot$ 	&	$\cdot$ 	\\[2pt] \hline
		145	&	Y 	&	$\cdot$ 	&	$\cdot$ 	&	$\cdot$ 	&	$\cdot$ 	&	$\cdot$  \hyperref[thm158_100]{(C)}	&	$\cdot$ 	&	$\cdot$  \hyperref[thm158_100]{(C)}	&	$\cdot$  \hyperref[thm158_100]{(C)}	&	$\cdot$ 	\\[2pt] \hline
		146	&	Y 	&	Y 	&	$\cdot$ 	&	$\cdot$ 	&	$\cdot$ 	&	$\cdot$ 	&	$\cdot$  \hyperref[thm158_100]{(C)}	&	Y 	&	$\cdot$  \hyperref[thm184_36]{(F)}	&	$\cdot$ 	\\[2pt] \hline
		147	&	Y 	&	Y 	&	$\cdot$ 	&	Y 	&	$\cdot$ 	&	$\cdot$ 	&	$\cdot$ \cite{LeungMa2}	&	N  \hyperref[thm_fielddescent]{(A)}	&	$\cdot$  \hyperref[cor105_81]{(B)}	&	$\cdot$ 	\\[2pt] \hline
		148	&	Y 	&	Y 	&	$\cdot$ 	&	$\cdot$ 	&	$\cdot$ 	&	$\cdot$  \hyperref[thm184_36]{(F)}	&	$\cdot$ \cite{Yorgov} \hyperref[thm158_100]{(C)}	&	$\cdot$  \hyperref[thm184_36]{(F)}	&	$\cdot$  \hyperref[thm158_100]{(C)}	&	$\cdot$  \hyperref[thm158_100]{(C)}	\\[2pt] \hline
		149	&	Y 	&	$\cdot$ 	&	$\cdot$ 	&	$\cdot$ 	&	$\cdot$ 	&	$\cdot$  \hyperref[thm184_36]{(F)}	&	$\cdot$  \hyperref[thm158_100]{(C)}	&	$\cdot$  \hyperref[thm184_36]{(F)}	&	$\cdot$  \hyperref[thm184_36]{(F)}	&	$\cdot$  \hyperref[thm158_100]{(C)}	\\[2pt] \hline
		150	&	Y 	&	Y 	&	$\cdot$ 	&	$\cdot$ 	&	$\cdot$ 	&	$\cdot$ 	&	$\cdot$ 	&	$\cdot$ 	&	$\cdot$ 	&	$\cdot$ 	\\[2pt] \hline

	\end{tabular}																					
	\caption*{\\Existence (Y), nonexistence ($\cdot$), nonexistence found by computer (*), nonexistence shown in this																					
		paper (N), and open cases (?) for $CW(v, s^2)$}																					
\end{table}

\begin{table}[t]																					
	\footnotesize																					
	\centering																					
	\caption*{\Cref{table} (continued)}																					

	\begin{tabular}{|p{0.5cm}||p{0.5cm}|p{0.5cm}|p{0.5cm}|p{0.5cm}|p{0.65cm}|p{1cm}|p{1.35cm}|p{1cm}|p{1cm}|p{1cm}|}																					
		
		\hline																					
		$s$	&	1	&	2	&	3	&	4	&	5	&	6	&	7	&	8	&	9	&	10	\\[2pt] \hline
		$v$	&		&		&		&		&		&		&		&		&		&		\\[2pt]  \hhline{|=||=|=|=|=|=|=|=|=|=|=|}
		
		151	&	Y 	&	$\cdot$ 	&	$\cdot$ 	&	$\cdot$ 	&	$\cdot$ 	&	$\cdot$ 	&	$\cdot$  \hyperref[thm158_100]{(C)}	&	$\cdot$  \hyperref[thm184_36]{(F)}	&	$\cdot$  \hyperref[thm184_36]{(F)}	&	$\cdot$  \hyperref[thm158_100]{(C)}	\\[2pt] \hline
		152	&	Y 	&	Y 	&	$\cdot$ 	&	$\cdot$ 	&	$\cdot$ \cite{Yorgov}	&	$\cdot$  \hyperref[thm184_36]{(F)}	&	? 	&	$\cdot$  \hyperref[thm184_36]{(F)}	&	$\cdot$  \hyperref[thm158_100]{(C)}	&	$\cdot$  \hyperref[thm190_100]{(E)}	\\[2pt] \hline
		153	&	Y 	&	$\cdot$ 	&	$\cdot$ 	&	$\cdot$ 	&	$\cdot$ 	&	$\cdot$ 	&	$\cdot$ 	&	$\cdot$ 	&	$\cdot$  \hyperref[thm138_36]{(D)}	&	$\cdot$ 	\\[2pt] \hline
		154	&	Y 	&	Y 	&	$\cdot$ 	&	Y 	&	$\cdot$ 	&	$\cdot$ \cite{ArasuAli}	&	$\cdot$  \cite{LeungMa2}	&	$\cdot$ 	&	? 	&	N \hyperref[thm154_100]{(G)}	\\[2pt] \hline
		155	&	Y 	&	$\cdot$ 	&	$\cdot$ 	&	Y 	&	Y 	&	N \hyperref[thm155_36]{(J)}	&	$\cdot$  \hyperref[thm158_100]{(C)}	&	$\cdot$ 	&	$\cdot$  \hyperref[thm158_100]{(C)}	&	? 	\\[2pt] \hline
		156	&	Y 	&	Y 	&	Y 	&	$\cdot$ 	&	$\cdot$ \cite{LeungMa}	&	Y 	&	$\cdot$  \hyperref[thm158_100]{(C)}	&	$\cdot$ 	&	? 	&	? 	\\[2pt] \hline
		157	&	Y 	&	$\cdot$ 	&	$\cdot$ 	&	$\cdot$ 	&	$\cdot$ 	&	$\cdot$  \hyperref[thm184_36]{(F)}	&	$\cdot$  \hyperref[thm158_100]{(C)}	&	$\cdot$  \hyperref[thm184_36]{(F)}	&	$\cdot$  \hyperref[thm184_36]{(F)}	&	$\cdot$  \hyperref[thm158_100]{(C)}	\\[2pt] \hline
		158	&	Y 	&	Y 	&		&	$\cdot$ 	&	$\cdot$ 	&	$\cdot$  \hyperref[thm184_36]{(F)}	&	$\cdot$  \hyperref[thm158_100]{(C)}	&	$\cdot$  \hyperref[thm184_36]{(F)}	&	$\cdot$  \hyperref[thm184_36]{(F)}	&	N \hyperref[thm158_100]{(C)}	\\[2pt] \hline
		159	&	Y 	&	$\cdot$ 	&	$\cdot$ 	&	$\cdot$ 	&	$\cdot$ 	&	$\cdot$  \hyperref[thm158_100]{(C)}	&	$\cdot$  \hyperref[thm158_100]{(C)}	&	$\cdot$  \hyperref[thm158_100]{(C)}	&	$\cdot$  \hyperref[thm158_100]{(C)}	&	$\cdot$  \hyperref[thm158_100]{(C)}	\\[2pt] \hline
		160	&	Y 	&	Y 	&	$\cdot$ 	&	$\cdot$ 	&	$\cdot$ \cite{LeungMa}	&	$\cdot$ 	&	? 	&	$\cdot$ 	&	? 	&	N \hyperref[thm_fielddescent]{(A)}	\\[2pt] \hline
		161	&	Y 	&	Y 	&	$\cdot$ 	&	$\cdot$ 	&	$\cdot$ 	&	$\cdot$ 	&	$\cdot$ \cite{LeungMa2}	&	$\cdot$ 	&	$\cdot$  \hyperref[thm158_100]{(C)}	&	$\cdot$  \hyperref[thm190_100]{(E)}	\\[2pt] \hline
		162	&	Y 	&	Y 	&	$\cdot$ 	&	$\cdot$ 	&	$\cdot$ 	&	$\cdot$ 	&	$\cdot$ \cite{Yorgov}	&	$\cdot$ 	&	$\cdot$ 	&	$\cdot$ 	\\[2pt] \hline
		163	&	Y 	&	$\cdot$ 	&	$\cdot$ 	&	$\cdot$ 	&	$\cdot$ 	&	$\cdot$  \hyperref[thm184_36]{(F)}	&	$\cdot$  \hyperref[thm158_100]{(C)}	&	$\cdot$  \hyperref[thm184_36]{(F)}	&	$\cdot$  \hyperref[thm184_36]{(F)}	&	$\cdot$  \hyperref[thm158_100]{(C)}	\\[2pt] \hline
		164	&	Y 	&	Y 	&	$\cdot$ 	&	$\cdot$ 	&	$\cdot$ 	&	$\cdot$ 	&	$\cdot$  \hyperref[thm158_100]{(C)}	&	$\cdot$  \hyperref[thm184_36]{(F)}	&	$\cdot$ 	&	$\cdot$  \hyperref[thm158_100]{(C)}	\\[2pt] \hline
		165	&	Y 	&	$\cdot$ 	&	$\cdot$ 	&	$\cdot$ 	&	Y 	&	$\cdot$ 	&	$\cdot$ \cite{Yorgov}	&	$\cdot$ 	&	$\cdot$ 	&	? 	\\[2pt] \hline
		166	&	Y 	&	Y 	&	$\cdot$ 	&	$\cdot$ 	&	$\cdot$ 	&	$\cdot$  \hyperref[thm184_36]{(F)}	&	$\cdot$  \hyperref[thm158_100]{(C)}	&	$\cdot$  \hyperref[thm184_36]{(F)}	&	$\cdot$  \hyperref[thm184_36]{(F)}	&	$\cdot$  \hyperref[thm158_100]{(C)}	\\[2pt] \hline
		167	&	Y 	&	$\cdot$ 	&	$\cdot$ 	&	$\cdot$ 	&	$\cdot$ 	&	$\cdot$  \hyperref[thm184_36]{(F)}	&	$\cdot$  \hyperref[thm158_100]{(C)}	&	$\cdot$  \hyperref[thm184_36]{(F)}	&	$\cdot$  \hyperref[thm184_36]{(F)}	&	$\cdot$  \hyperref[thm158_100]{(C)}	\\[2pt] \hline
		168	&	Y 	&	Y 	&	Y 	&	Y 	&	$\cdot$ 	&	Y 	&	$\cdot$  \cite{LeungMa2}	&	Y 	&	$\cdot$ 	&	$\cdot$ 	\\[2pt] \hline
		169	&	Y 	&	$\cdot$ 	&	Y 	&	$\cdot$ 	&	$\cdot$ 	&	$\cdot$ 	&	$\cdot$  \hyperref[thm158_100]{(C)}	&	$\cdot$  \hyperref[thm184_36]{(F)}	&	$\cdot$ 	&	$\cdot$ 	\\[2pt] \hline
		170	&	Y 	&	Y 	&	$\cdot$ 	&	$\cdot$ 	&	$\cdot$ 	&	$\cdot$ 	&	$\cdot$  \hyperref[thm158_100]{(C)}	&	$\cdot$ 	&	$\cdot$  \hyperref[thm158_100]{(C)}	&	$\cdot$ 	\\[2pt] \hline
		171	&	Y 	&	$\cdot$ 	&	$\cdot$ 	&	$\cdot$ 	&	$\cdot$ \cite{Yorgov}	&	$\cdot$ 	&	Y 	&	$\cdot$  \hyperref[thm158_100]{(C)}	&	$\cdot$  \hyperref[thm138_36]{(D)}	&	$\cdot$ 	\\[2pt] \hline
		172	&	Y 	&	Y 	&	$\cdot$ 	&	$\cdot$ 	&	$\cdot$ 	&	$\cdot$  \hyperref[thm158_100]{(C)}	&	$\cdot$ 	&	$\cdot$  \hyperref[thm184_36]{(F)}	&	$\cdot$  \hyperref[thm184_36]{(F)}	&	$\cdot$ 	\\[2pt] \hline
		173	&	Y 	&	$\cdot$ 	&	$\cdot$ 	&	$\cdot$ 	&	$\cdot$ 	&	$\cdot$  \hyperref[thm184_36]{(F)}	&	$\cdot$  \hyperref[thm158_100]{(C)}	&	$\cdot$  \hyperref[thm184_36]{(F)}	&	$\cdot$  \hyperref[thm184_36]{(F)}	&	$\cdot$  \hyperref[thm158_100]{(C)}	\\[2pt] \hline
		174	&	Y 	&	Y 	&	$\cdot$ 	&	$\cdot$ 	&	$\cdot$ 	&	$\cdot$  \hyperref[thm138_36]{(D)}	&	Y 	&	$\cdot$  \hyperref[thm158_100]{(C)}	&	$\cdot$  \hyperref[thm158_100]{(C)}	&	$\cdot$ 	\\[2pt] \hline
		175	&	Y 	&	Y 	&	$\cdot$ 	&	$\cdot$ 	&	$\cdot$ 	&	$\cdot$ 	&	$\cdot$  \cite{LeungMa2}	&	$\cdot$  \hyperref[thm190_100]{(E)}	&	$\cdot$  \hyperref[cor105_81]{(B)}	&	$\cdot$  \hyperref[thm190_100]{(E)}	\\[2pt] \hline
		176	&	Y 	&	Y 	&	$\cdot$ 	&	$\cdot$ 	&	$\cdot$ 	&	$\cdot$ 	&	$\cdot$  \hyperref[thm158_100]{(C)}	&	$\cdot$  \hyperref[thm190_100]{(E)}	&	$\cdot$ 	&	N \hyperref[thm_fielddescent]{(A)}	\\[2pt] \hline
		177	&	Y 	&	$\cdot$ 	&	$\cdot$ 	&	$\cdot$ 	&	$\cdot$ 	&	$\cdot$  \hyperref[thm158_100]{(C)}	&	$\cdot$  \hyperref[thm158_100]{(C)}	&	$\cdot$  \hyperref[thm158_100]{(C)}	&	$\cdot$  \hyperref[thm158_100]{(C)}	&	$\cdot$  \hyperref[thm158_100]{(C)}	\\[2pt] \hline
		178	&	Y 	&	Y 	&	$\cdot$ 	&	$\cdot$ 	&	$\cdot$ 	&	$\cdot$  \hyperref[thm184_36]{(F)}	&	$\cdot$  \hyperref[thm158_100]{(C)}	&	$\cdot$  \hyperref[thm184_36]{(F)}	&	$\cdot$  \hyperref[thm184_36]{(F)}	&	$\cdot$ 	\\[2pt] \hline
		179	&	Y 	&	$\cdot$ 	&	$\cdot$ 	&	$\cdot$ 	&	$\cdot$ 	&	$\cdot$  \hyperref[thm184_36]{(F)}	&	$\cdot$  \hyperref[thm158_100]{(C)}	&	$\cdot$  \hyperref[thm184_36]{(F)}	&	$\cdot$  \hyperref[thm184_36]{(F)}	&	$\cdot$  \hyperref[thm158_100]{(C)}	\\[2pt] \hline
		180	&	Y 	&	Y 	&	$\cdot$ 	&	$\cdot$ 	&	$\cdot$ 	&	? 	&	$\cdot$ 	&	? 	&	$\cdot$ 	&	$\cdot$ 	\\[2pt] \hline

	\end{tabular}																					
	\caption*{\\Existence (Y), nonexistence ($\cdot$), nonexistence found by computer (*), nonexistence shown in this																					
		paper (N), and open cases (?) for $CW(v, s^2)$}																					
\end{table}

\begin{table}[t]																					
	\footnotesize																					
	\centering																					
	\caption*{\Cref{table} (continued)}																					

	\begin{tabular}{|p{0.5cm}||p{0.5cm}|p{0.5cm}|p{0.5cm}|p{0.5cm}|p{0.65cm}|p{1cm}|p{1.35cm}|p{1cm}|p{1cm}|p{1cm}|}																					
		
		\hline																					
		$s$	&	1	&	2	&	3	&	4	&	5	&	6	&	7	&	8	&	9	&	10	\\[2pt] \hline
		$v$	&		&		&		&		&		&		&		&		&		&		\\[2pt]  \hhline{|=||=|=|=|=|=|=|=|=|=|=|}
		
		181	&	Y 	&	$\cdot$ 	&	$\cdot$ 	&	$\cdot$ 	&	$\cdot$ 	&	$\cdot$  \hyperref[thm184_36]{(F)}	&	$\cdot$  \hyperref[thm158_100]{(C)}	&	$\cdot$  \hyperref[thm184_36]{(F)}	&	$\cdot$  \hyperref[thm184_36]{(F)}	&	$\cdot$  \hyperref[thm158_100]{(C)}	\\[2pt] \hline
		182	&	Y 	&	Y 	&	Y 	&	Y 	&	$\cdot$ 	&	Y 	&	$\cdot$  \cite{LeungMa2}	&	? 	&	Y 	&	? 	\\[2pt] \hline
		183	&	Y 	&	$\cdot$ 	&	$\cdot$ 	&	$\cdot$ 	&	$\cdot$ 	&	$\cdot$  \hyperref[thm158_100]{(C)}	&	$\cdot$  \hyperref[thm158_100]{(C)}	&	$\cdot$  \hyperref[thm158_100]{(C)}	&	$\cdot$ 	&	$\cdot$  \hyperref[thm158_100]{(C)}	\\[2pt] \hline
		184	&	Y 	&	Y 	&	$\cdot$ 	&	$\cdot$ 	&	$\cdot$ 	&	N \hyperref[thm184_36]{(F)}	&	$\cdot$  \hyperref[thm158_100]{(C)}	&	N \hyperref[thm138_36]{(D)}	&	N \hyperref[thm190_100]{(E)}	&	$\cdot$  \hyperref[thm190_100]{(E)}	\\[2pt] \hline
		185	&	Y 	&	$\cdot$ 	&	$\cdot$ 	&	$\cdot$ 	&	$\cdot$ 	&	$\cdot$  \hyperref[thm158_100]{(C)}	&	$\cdot$  \hyperref[thm158_100]{(C)}	&	$\cdot$  \hyperref[thm158_100]{(C)}	&	$\cdot$  \hyperref[thm158_100]{(C)}	&	$\cdot$  \hyperref[thm158_100]{(C)}	\\[2pt] \hline
		186	&	Y 	&	Y 	&	$\cdot$ 	&	Y 	&	Y 	&	$\cdot$ 	&	$\cdot$  \hyperref[thm158_100]{(C)}	&	Y 	&	$\cdot$  \hyperref[thm158_100]{(C)}	&	Y 	\\[2pt] \hline
		187	&	Y 	&	$\cdot$ 	&	$\cdot$ 	&	$\cdot$ 	&	$\cdot$ 	&	$\cdot$  \hyperref[thm190_100]{(E)}	&	$\cdot$  \hyperref[thm158_100]{(C)}	&	$\cdot$  \hyperref[thm158_100]{(C)}	&	$\cdot$ 	&	$\cdot$ 	\\[2pt] \hline
		188	&	Y 	&	Y 	&	$\cdot$ 	&	$\cdot$ 	&	$\cdot$ 	&	$\cdot$  \hyperref[thm184_36]{(F)}	&	$\cdot$  \hyperref[thm158_100]{(C)}	&	$\cdot$  \hyperref[thm184_36]{(F)}	&	$\cdot$  \hyperref[thm184_36]{(F)}	&	$\cdot$  \hyperref[thm158_100]{(C)}	\\[2pt] \hline
		189	&	Y 	&	Y 	&	$\cdot$ 	&	Y 	&	$\cdot$ 	&	$\cdot$ 	&	N \cite{LeungMa2}  \hyperref[thm_fielddescent]{(A)}	&	$\cdot$ 	&	$\cdot$  \hyperref[cor105_81]{(B)}	&	$\cdot$ 	\\[2pt] \hline
		190	&	Y 	&	Y 	&	$\cdot$ 	&	$\cdot$ 	&	$\cdot$ \cite{LeungMa}	&	$\cdot$  \hyperref[thm158_100]{(C)}	&	$\cdot$ \cite{Yorgov}	&	$\cdot$  \hyperref[thm158_100]{(C)}	&	$\cdot$  \hyperref[thm158_100]{(C)}	&	N \hyperref[thm190_100]{(E)}	\\[2pt] \hline
		191	&	Y 	&	$\cdot$ 	&	$\cdot$ 	&	$\cdot$ 	&	$\cdot$ 	&	$\cdot$  \hyperref[thm184_36]{(F)}	&	$\cdot$  \hyperref[thm158_100]{(C)}	&	$\cdot$  \hyperref[thm184_36]{(F)}	&	$\cdot$  \hyperref[thm184_36]{(F)}	&	$\cdot$  \hyperref[thm158_100]{(C)}	\\[2pt] \hline
		192	&	Y 	&	Y 	&	Y 	&	$\cdot$ 	&	$\cdot$ \cite{LeungMa}	&	Y \cite{SchmidtSmith}	&	? 	&	$\cdot$ 	&	$\cdot$\cite{SchmidtSmith}	&	N  \hyperref[thm_fielddescent]{(A)}	\\[2pt] \hline
		193	&	Y 	&	$\cdot$ 	&	$\cdot$ 	&	$\cdot$ 	&	$\cdot$ 	&	$\cdot$  \hyperref[thm184_36]{(F)}	&	$\cdot$  \hyperref[thm158_100]{(C)}	&	$\cdot$  \hyperref[thm184_36]{(F)}	&	$\cdot$  \hyperref[thm184_36]{(F)}	&	$\cdot$  \hyperref[thm158_100]{(C)}	\\[2pt] \hline
		194	&	Y 	&	Y 	&	$\cdot$ 	&	$\cdot$ 	&	$\cdot$ 	&	$\cdot$  \hyperref[thm184_36]{(F)}	&	$\cdot$  \hyperref[thm158_100]{(C)}	&	$\cdot$  \hyperref[thm184_36]{(F)}	&	$\cdot$  \hyperref[thm184_36]{(F)}	&	$\cdot$  \hyperref[thm158_100]{(C)}	\\[2pt] \hline
		195	&	Y 	&	$\cdot$ 	&	Y 	&	$\cdot$ 	&	$\cdot$ 	&	? 	&	$\cdot$  \hyperref[thm158_100]{(C)}	&	$\cdot$  \hyperref[thm158_100]{(C)}	&	? 	&	? 	\\[2pt] \hline
		196	&	Y 	&	Y 	&	$\cdot$ 	&	Y 	&	$\cdot$ 	&	$\cdot$ 	&	$\cdot$  \cite{LeungMa2}	&	? 	&	$\cdot$ 	&	$\cdot$ 	\\[2pt] \hline
		197	&	Y 	&	$\cdot$ 	&	$\cdot$ 	&	$\cdot$ 	&	$\cdot$ 	&	$\cdot$  \hyperref[thm184_36]{(F)}	&	$\cdot$  \hyperref[thm158_100]{(C)}	&	$\cdot$  \hyperref[thm184_36]{(F)}	&	$\cdot$  \hyperref[thm184_36]{(F)}	&	$\cdot$  \hyperref[thm158_100]{(C)}	\\[2pt] \hline
		198	&	Y 	&	Y 	&	$\cdot$ 	&	$\cdot$ 	&	Y 	&	$\cdot$  \hyperref[thm190_100]{(E)}	&	$\cdot$ \cite{Yorgov}	&	$\cdot$ 	&	? 	&	Y\cite{arasudillon} 	\\[2pt] \hline
		199	&	Y 	&	$\cdot$ 	&	$\cdot$ 	&	$\cdot$ 	&	$\cdot$ 	&	$\cdot$  \hyperref[thm184_36]{(F)}	&	$\cdot$  \hyperref[thm158_100]{(C)}	&	$\cdot$  \hyperref[thm184_36]{(F)}	&	$\cdot$  \hyperref[thm184_36]{(F)}	&	$\cdot$  \hyperref[thm158_100]{(C)}	\\[2pt] \hline
		200	&	Y 	&	Y 	&	$\cdot$ 	&	$\cdot$ 	&	$\cdot$ 	&	$\cdot$ 	&	$\cdot$ 	&	$\cdot$ 	&	$\cdot$ 	&	$\cdot$ 	\\[2pt] \hline

	\end{tabular}																					
	\caption*{\\Existence (Y), nonexistence ($\cdot$), nonexistence found by computer (*), nonexistence shown in this																					
		paper (N), and open cases (?) for $CW(v, s^2)$}																					
\end{table}																	

In addition, we solved some open cases in the table of group invariant weighing matrices of \cite{arasu2013group}, where the group is abelian but non-cyclic. 
Table \ref{table4} summarizes our results. 
Note that an abelian group $G = C_{v_1} \times C_{v_2} \times \ldots \times C_{v_r}$ is represented by $[v_1, v_2, \ldots, v_r]$ in the table.

\begin{table}[!ht]												
	\footnotesize											
	\centering											
	\caption{\Cref{table4}: Non-existence of $G$-invariant Weighing Matrices $W(|G|, n)$}			\label{table4}								
	\begin{tabular}{|c||c|c|}											
		
		\hline										
		$G$		&	$n$	&	Non-existence Proof	\\[2pt] \hhline{|=||=|=|}							
		$	[ 2, 2, 11 ]	$	& $	9	$ &	\Cref{cor:abelian1}		\\[2pt] \hline			
		$	[ 2, 2, 2, 11 ]	$	& $	9	$ &	\Cref{cor:abelian1}		\\[2pt] \hline			
		$	[ 2, 2, 23 ]	$	& $	9	$ &	\Cref{cor:abelian1}		\\[2pt] \hline			
		$	[ 2, 4, 11 ]	$	& $	9	$ &	\Cref{cor:abelian1}		\\[2pt] \hline			
		$	[ 2, 2, 19 ]	$	& $	25	$ &	\Cref{cor:abelian1}		\\[2pt] \hline			
		$	[ 2, 2, 11 ]	$	& $	36	$ &	\Cref{cor:abelian4}		\\[2pt] \hline			
		$	[ 2, 2, 23 ]	$	& $	36	$ &	\Cref{cor:abelian1}		\\[2pt] \hline			
		$	[ 2, 4, 11 ]	$	& $	36	$ &	\Cref{cor:abelian4}		\\[2pt] \hline			
		$	[ 3, 27 ]	$	& $	49	$ &	\Cref{thm_fielddescent}		\\[2pt] \hline			
		$	[ 2, 32 ]	$	& $	64	$ &	\Cref{thm_fielddescent}		\\[2pt] \hline			
		$	[ 2, 2, 23 ]	$	& $	81	$ &	\Cref{cor:abelian1}		\\[2pt] \hline			
		
	\end{tabular}											
\end{table}

\end{document}